\theoremstyle{plain}
\newtheorem*{thm1}{Theorem 1}
\newtheorem*{thm2}{Theorem 2}
\newtheorem{theorem}{Theorem}[section]
\newtheorem{lemma}[theorem]{Lemma}
\newtheorem{proposition}[theorem]{Proposition}
\theoremstyle{definition}
\newtheorem{condition}[theorem]{Condition}
\newtheorem*{condition-0}{Condition-$\mathscr B_0$}
\newtheorem*{condition-b}{Condition-$\mathscr B$}
\newtheorem{definition}[theorem]{Definition}
\newtheorem{notation}[theorem]{Notation}
\newtheorem{example}[theorem]{Example}
\newtheorem{remark}[theorem]{Remark}
\theoremstyle{remark}
\newtheorem*{ack}{Acknowledgement}
\numberwithin{equation}{section}
\def\gcd{\mathrm{gcd}}
\def\root{\mathrm{root}}
\title[$p$-adic zeta functions of plane curve singularities]{The $p$-adic zeta function of a plane curve singularity} % Declares the document's title.
\author[H.T. Ho\`ang]{Huyen Trang Ho\`ang}
\address{VNU University of Science, Vietnam National University, Hanoi \newline \indent 
	334 Nguyen Trai Street, Thanh Xuan District, Hanoi, Vietnam}
\email{hh.trang2409@gmail.com}
\thanks{}
\author[Q.T. L\^e]{Quy Thuong L\^e}
\address{VNU University of Science, Vietnam National University, Hanoi \newline \indent 
	334 Nguyen Trai Street, Thanh Xuan District, Hanoi, Vietnam}
\email{leqthuong@gmail.com}
\thanks{}
\author[H.L. Nguyen]{Hoang Long Nguyen}
\address{VNU University of Science, Vietnam National University, Hanoi \newline \indent 
334 Nguyen Trai Street, Thanh Xuan District, Hanoi, Vietnam}
\email{nguyenhoanglong\_t66@hus.edu.vn}
\thanks{}
\keywords{$p$-adic fields, $p$-adic toric modifications, $p$-adic zeta function, plane curve singularity, Newton polyhedra}
\subjclass[2020]{Primary 14B05, 14H20, 11S80, 11S40; Secondary 52B05}
\begin{document}           % End of preamble and beginning of text.
\begin{abstract}
Using toric modifications and some compatibility we compute the local $p$-adic zeta function of a plane curve singularity. Thanks to the compatibility, we can work over the analytic change of variables formula for $p$-adic integrals, hence avoid adapting to the algebraic setting and Denef's formula.   
\end{abstract}
\maketitle                 % Produces the title.

\section{Introduction}\label{sec1}
\subsection{Plane curve singularities} 
Up to now, there have been many excellent results in the study of hypersurface singularities in clasical and non-Archimedean geometries and arithmetic. In singularity theory, a surprising bridge connecting the geometry and arithmetic is the monodromy conjecture, where the number of solutions of an integer-coefficient polynomial equation modulo powers of a prime would be modeled on the singularities of the complex hypersurface defined by the vanishing of the polynomial. For plane curves, Loeser \cite{Loe88} and Rodrigues \cite{Rod04} respectively proved the conjecture associated to the Bernstein polynomial and the topological zeta function, while N\'emethi and Veys \cite{NV10} gave a generalized result. 

Furthermore, for plane curve singularities, Guibert \cite{G02} computed the Milnor motivic fiber in terms of the invariants of the value semigroup, Melle-Hernandez, Torrelli and Veys \cite{MHTV10} studied monodromy Jordan blocks, $b$-functions and poles of zeta functions,  L\^e \cite{Thuong1, Thuong2} described the monodromy zeta function and the Milnor motivic fiber. L\^e-Nguyen \cite{Thuong-Hung} described the topological zeta function and gave a new proof of the monodromy conjecture for curves using toric modifications. More recently, Cassou-Noguès and Raibaut \cite{CNR18, CNR21} studied the Milnor motivic fiber, the Milnor motivic fiber at infinity and the Milnor motivic fiber of rational functions.

In this paper, we are going to compute the $p$-adic zeta function of a plane curve singularity using the methods in \cite{AO} as well as in our previous works \cite{Thuong1, Thuong2, Thuong-Hung}. Since $p$-adic fields are not algebraically closed, additional assumptions need to be used to make the computation feasible.

\subsection{$p$-adic zeta functions}
Let $p$ be a prime natural number, and let $(K,|\cdot |)$ be a $p$-adic field with the ring of $p$-adic integral numbers 
$$\mathcal O_K=\{\xi\in K\mid |\xi|\leq 1\},$$ 
the maximal ideal 
$$\mathcal M_K=\{\xi\in K\mid |\xi|<1\},$$ 
and the residue field $\mathbb F_q=\mathcal O_K/\mathcal M_K$, where $q$ is a power of $p$. Let $v$ denote the $\mathcal M_K$-valuation, so that $|\xi|=q^{-v(\xi)}$ for $\xi\in K^*$ and $v(0)=\infty$. Fix a generator $\pi$ of $\mathcal M_K$. It is a fact that $K$ is a topological field and a basis of the topology at the origin is formed from the sets $\pi^i\mathcal O_K$ for integers $i$. Because $\pi^i\mathcal O_K$'s are compact open subgroups of $(K,+)$ and $K=\bigcup_{i\in \mathbb Z}\pi^i\mathcal O_K$, $K$ is a locally compact totally disconnected group, thus for any integer $n\geq 1$, $(K^n,+)$ admits a unique canonical Haar measure $|dx|$ such that $|dx|(\mathcal O_K^n)=1$.

Let $f(x)$ be a polynomial over $\mathcal O_K$ in $n$ variables $x=(x_1,\dots,x_n)$. We are interested in the simplest form of $p$-adic zeta functions of $f$ defined as follows
$$Z(f;s):=\int_{\mathcal O_K^n}|f(x)|^s|dx|.$$
It is straightforward that $Z(f;s)$ is well defined on the complex domain given by $\Re(s)>0$. Igusa, however, showed that $Z(f;s)$ can be extended to a meromorphic function on the whole complex plane; he also proved that it is a rational function in $q^{-s}$. By the $p$-adic monodromy conjecture, for $f$ over $\mathbb Z$ and $p$ sufficiently large, it is expected that the poles of $Z(f;s)$ provide the eigenvalues of the monodromy at complex points of $f^{-1}(0)\subseteq \mathbb C^n$. If we denote for each natural number $m$ by $N_m$ the number of solutions in $(\mathcal O_K/(\pi^m))^n$ of the congruence equation $f(x)\equiv 0\mod \pi^m$, we obtain the Poincar\'e series 
$$P_f(t)=\sum_{m\geq 0}N_m(q^{-n}t)^m,$$ 
which relates to $Z(f;s)$ by the identification 
$$(1-q^{-s})P_f(q^{-s})=1-q^{-s}Z(f;s).$$
%This together with the $p$-adic monodromy conjecture relates the arithmetic properties and the complex topology of a $\mathbb Z$-polynomial. 

Not only for polynomials, one can also define the $p$-adic integral and the local zeta function for $K$-analytic functions, i.e. for formal power series over $K$ convergent in a subset of $K^n$. It is a fact that any series $f(x)=\sum_{\alpha\in \mathbb N^n}a_{\alpha}x^{\alpha}$ in $K[[x]]$ has its radius of convergence to be 
$$\rho(f)=\frac{1}{\limsup_{|\alpha|\to\infty}|a_{\alpha}|^{1/|\alpha|}}.$$ 
In particular, if $f(x)$ is in $\mathcal O_K[[x]]$ it is a $K$-analytic function on $\mathcal M_K^{\oplus n}$. Now, assume in addition that $f(x)$ vanishes at the origin $\mathbf 0$ of $K^n$. Consider the $p$-adic local zeta function of $f$ at $\mathbf 0$ defined as follows
$$Z_0(f;s):=\int_{\mathcal M_K^{\oplus n}}|f(x)|^s|dx|.$$

We will focus on the two cases mentioned previously. Let $\Phi: Y\to K^n$ be a $K$-analytic morphism of compact
$K$-analytic manifolds such that $\Phi$ is bi-analytic away from $(f\circ \Phi)^{-1}(0)$ (or more generally, away from any closed subsets $Z\subseteq Y$ and $\Phi(Z)\subseteq K^n$ of measure $0$). By the analytic change of variables formula (cf. \cite[Proposition 7.4.1]{Ig}) one has
\begin{align*}%\label{change-of-variables}
\int_A|f(x)|^s|dx|=\int_{\Phi^{-1}(A)}|f\circ \Phi|^s|\Phi^*dx|,
\end{align*}
where $A$ is either $\mathcal O_K^n$ (when $f$ is a polynomial over $\mathcal O_K$) or $\mathcal M^{\oplus n}$, from which $Z(f;s)$ and $Z_0(f;s)$ can be expressed in terms of the numerical data of $\Phi$. Since Denef's formula for $p$-adic zeta functions using good reduction resolution of singularities only works in the algebraic setting, while our context is mainly analytic, we are going to apply the analytic change of variables formula to toric modifications in a crucial way in this paper.

%Let $f(x)$ be a formal power series in $\mathcal O_K[[x]]$ vanishing at $\mathbf 0$.  

\subsection{Main results}
Fix an algebraic closure $\overline{K}$ of $K$, and suppose $x$ (resp. $y$) to be a single variable. Let $f(x,y)$ be in $\mathcal O_K[[x]][y]$, which defines a plane curve $C\subseteq \overline{K}^2$ near the origin $\mathbf 0=(0,0)$. Then $f(x,y)$ admits a decomposition into irreducible components in $K[[x,y]]$, in which we can assume $f(x,y)$ equals $x^{\alpha}y^{\beta}$ times non-smooth components. Each non-smooth component $f_{i\ell\tau}(x,y)$ has its initial expansion 
$$f_{i\ell\tau}(x,y)=(y^{a_i}+\xi_{i\ell}x^{b_i})^{A_{i\ell\tau}}+\text{(higher terms)},$$
where $\xi_{i\ell}\in \overline{K}$, $\xi_{i\ell}\not=\xi_{i\ell'}$ when $\ell\not=\ell'$, $a_i\geq 2$, $b_i\geq 2$, and $A_{i\ell\tau}$ are in $\mathbb N^*$ with $(a_i,b_i)$ coprime, for $1\leq i\leq k$, $1\leq \ell \leq r_i$, and $1\leq \tau\leq s_{i\ell}$. The vectors $P_i=(a_i,b_i)^t$, $1\leq i\leq k$, correspond to the compact facets of the Newton polyhedron $\Gamma_f$ of $f$. Consider a {\it regular} simplicial cone subdivision $\Sigma$ that contains all $P_i$'s. Similarly as in \cite{AO}, $\Sigma$ gives rise to a toric modification $\Phi: X_{\Sigma}\to K^2$, which is a $K$-analytic map and a bi-analytic isomorphism outside measure-zero sets.

Assuming in addition that $\xi_{i\ell}$ is in $\mathcal O_K\setminus \mathcal M_K$ for every $1\leq i\leq k$ and $1\leq \ell\leq r_i$, the Newton polyhedra $\Gamma_f$ and $\Gamma_{\widetilde{f}}$ of $f$ and its modulo $\mathcal M_K$ reduction $\widetilde{f}$, respectively, are the same. This is possible when taking $K$ a sufficiently large extension of $\mathbb Q_p$. It guarantees that the toric modifications 
$$\Phi: X_{\Sigma}\to K^2\quad \text{and}\quad \widetilde{\Phi}: \widetilde{X}_{\Sigma}\to \mathbb F_q^2$$ 
are compatible via the reduction map. Using the compatibility we show in Proposition \ref{main-formula} how to compute $Z_0(f;s)$ in an inductive way on toric modifications. All the main results of the present paper are consequences of Proposition \ref{main-formula}, which are sketched as follows.

\begin{thm1}[Theorem \ref{main-formula-nondegenerate}, Proposition \ref{zeta-nondegenerate}]
Under the Newton nondegeneracy for $f(x,y)$ and the compatibility of $\Phi$ and $\widetilde{\Phi}$ (see more details in Theorem \ref{main-formula-nondegenerate}), $Z_0(f;s)$ is computed and equal to $\frac{q-1}{q^2}$ times
$$\sum_{j=1}^m\frac{\#\widetilde{E}(T_j)^{\circ}}{q^{N_js+\nu_j}-1}+\sum_{j=0}^m\frac{q-1}{(q^{N_js+\nu_j}-1)(q^{N_{j+1}s+\nu_{j+1}}-1)}+\sum_{i=1}^k\frac{(q-1)r_i}{(q^{N(P_i)s+\nu(P_i)}-1)(q^{s+1}-1)}$$
using numerical data $N_j$, $N(P_i)$, $\nu_j$ and $\nu(P_i)$ from $\Phi$. Furthermore, eliminating fake poles, one can find a univariate $\mathbb Q$-polynomial $\mathcal Q$ such that
$$Z_0(f;s)=\frac{\mathcal Q(q^s)}{(q^{s+1}-1)(q^{\alpha s+1}-1)(q^{\beta s+1}-1)\prod_{1\leq i\leq k}(q^{N(P_i)s+\nu(P_i)}-1)}.$$
\end{thm1}

For a general singularity $f(x,y)\in \mathcal O_K[[x]][y]$ we can construct a toric modification $\Phi_0$ associated to its Newton polyhedron $\Gamma_f$. Singularities in the strict transform of $f(x,y)$ under $\Phi_0$ require more toric modifications to resolve. There is a canonical way to create a new system of coordinates at these singular points before new toric modifications, it is the use of Tschirnhausen approximate polynomials of $f(x,y)$ introduced in \cite{AO} (not taking the factor $x^{\alpha}y^{\beta}$ into account one may assume $f(x,y)$ is a {\it monic} polynomial in $y$). Collecting all toric modifications (which is finite) we get a resolution of the singularity, hence a resolution graph $\mathbf G$. The vertices of $\mathbf G$ corresponding bijectively to a toric modification will be called a {\it bamboo}. Through its bamboos, the graph $\mathbf G$ presents the hierarchy of the toric modifications.

\begin{thm2}[Theorem \ref{zeta-general}, Theorem \ref{zeta-truepoles}]
Let $f(x,y)$ be a monic polynomial in $\mathcal O_K[[x]][y]$ such that $f(\mathbf 0)=0$. Assume that for any toric modification $\Phi$, it and its reduction $\widetilde{\Phi}$ is compatible (see more details in Theorem \ref{zeta-general}). For a top bamboo $\mathscr B$ of $\mathbf G$, put
$$Z_{\mathscr B}(s)=\frac{(q-1)^2/q^2}{(q^{N(P_{\root}^{\mathscr B})s+\nu(P_{\root}^{\mathscr B})}-1)(q^{s+1}-1)};$$
for a non-top bamboo $\mathscr B$, put 
\begin{gather*}
Z_{\mathscr B}(s)=\frac{q-1}{q^2}\sum_{j=1}^{m^{\mathscr B}}\frac{\#\widetilde{E}(T_j^{\mathscr B})^{\circ}}{q^{N_j^{\mathscr B}s+\nu_j^{\mathscr B}}-1}+\sum_{j=0}^{m^{\mathscr B}}\frac{(q-1)^2/q^2}{(q^{N_j^{\mathscr B}s+\nu_j^{\mathscr B}}-1)(q^{N_{j+1}^{\mathscr B}s+\nu_{j+1}^{\mathscr B}}-1)}\\
\qquad +\frac{(q-1)^2/q^2}{(q^{N(P_{\root}^{\mathscr B})s+\nu(P_{\root}^{\mathscr B})}-1)(q^{N_1^{\mathscr B}s+\nu_1^{\mathscr B}}-1)}.
\end{gather*}
Here $N_j^{\mathscr B}$, $\nu_j^{\mathscr B}$, $N(P_{\root}^{\mathscr B})$ and $\nu(P_{\root}^{\mathscr B})$ are given in Lemmas \ref{lem41} and \ref{lem42}. Then
$$Z_0(f;s)=\sum_{\mathscr B\in \mathbf B}Z_{\mathscr B}(s).$$
Furthermore, eliminating fake poles, one can find a univariate $\mathbb Q$-polynomial $\mathcal Q$ such that
$$Z_0(f;s)=\frac{\mathcal Q(q^s)}{(q^{s+1}-1)(q^{\alpha s+1}-1)(q^{\beta s+1}-1)\prod_{\mathscr B\in \mathbf B^{\mathrm{nt}}}\prod_{1\leq i\leq k}(q^{N(P_i^{\mathscr B})s+\nu(P_i^{\mathscr B})}-1)}.$$
\end{thm2}

Remark that the compatibility of all pairs of toric modifications $\Phi$ and $\widetilde{\Phi}$ (more precisely, Condition-$\mathscr B_0$ and Condition-$\mathscr B$) although guarantees that the strict transform $E'$ of the resolution of the singularity intersects transversally with $E(P_i^{\mathscr B})$ but it does not allow us to know the exact number of irreducible components of $E'$ (we would certainly know this number when $K$ was algebraically closed, which is impossible). This is the reason why we choose to study $Z_0(f;s)$ instead of $Z(f;s)$ when $f$ is not a polynomial.

%***********************************
\section{Toric modifications and the $p$-adic change of variables formula}\label{sec2}

\subsection{Toric modifications}
We consider the order on $2$-dimensional primitive integral column vectors defined as follows $P<Q$ if and only if $\det(P,Q)>0$. The primitiveness for the integral column vectors is important to guarantee that $\{<,=\}$ is an ordering relation. A {\it simplicial cone subdivision} $\Sigma$ of the first quarter $\mathbb R_{\geq 0}^2$ is a sequence $T_1<\cdots<T_m$ of primitive vectors in $\rm{Mat}(2\times 1, \mathbb N)$. The vectors $T_1, \dots,T_m$ are called {\it vertices} of $\Sigma$. Put $T_0=(1,0)^t$, $T_{m+1}=(0,1)^t$, and $\sigma_j=(T_j,T_{j+1})$, $0\leq j\leq m$. A simplicial cone subdivision $\Sigma$ is called {\it regular} if $\det\sigma_j=1$ for all $0\leq j\leq m$. 

For any integral $(2\times 2)$-matrix 
$$\sigma=\left(\begin{matrix}a&b\\ c&d \end{matrix}\right)$$ 
we consider the map 
$$\Phi_{\sigma}:K^2\rightarrow K^2$$ 
defined by 
$$\Phi_{\sigma}(x,y)=(x^ay^b,x^cy^d).$$ 
If $\det\sigma=\pm 1$, then $\Phi_{\sigma}$ is a birational map, which is inverse to $\Phi_{\sigma^{-1}}$. For a regular simplicial cone subdivision $\Sigma=\{T_1<\cdots<T_m\}$, we consider the toric charts $(K^2_{\sigma_i};x_j,y_j)$, $0\leq j\leq m$, with $K^2_{\sigma_j}$ a copy of $K^2$. We can glue these charts along $(K^*_{\sigma_j})^2$ according to the relations 
$$\Phi_{\sigma_i^{-1}\sigma_j}(x_j,y_j)=(x_i,y_i), \quad $$ 
into a $2$-dimensional $p$-adic analytic smooth manifold $X_{\Sigma}$. We define a map 
$$\Phi: X_{\Sigma}\to K^2$$ 
by putting 
$$\Phi(x_j,y_j)=\Phi_{\sigma_j}(x_j,y_j)$$ 
for $(x_j,y_j)$ in the chart $K^2_{\sigma_j}$, $0\leq j\leq m$. This map $\Phi$ is clearly well defined and it is called the {\it toric modification of $K^2$} associated to $\Sigma$. The divisor $\Phi^{-1}(\mathbf 0)$ has simple normal crossings with $m$ {\it exceptional divisors} $E(T_j)$ for $1\leq j \leq m$. Each exceptional divisor $E(T_j)$ corresponds uniquely to the vertex $T_j$ of $\Sigma$, it is contained in the union of the two consecutive charts $K^2_{\sigma_{j-1}}$ and $K^2_{\sigma_j}$, and defined on them by $y_{j-1}=0$ and $x_j=0$, respectively. Then $E(T_j)$ and $E(T_i)$ intersect if and only if $|j-i|\leq 1$, and the intersections are transversal if and only if $|j-i|=1$. The noncompact components $E(T_0)=\{x_0=0\}$ and $E(T_{m+1})=\{y_m=0\}$ are isomorphic to the coordinate axes $x=0$ and $y=0$ respectively.

The same construction as previous can be also applied to obtain the toric modification of $\mathbb F_q^2$ associated to $\Sigma$, we will denote it by $\widetilde{\Phi}: \widetilde{X}_{\Sigma}\to \mathbb F_q^2$.

%******************************
\subsection{Admissible toric modifications}
Let $f(x,y)$ be in $\mathcal O_K[[x,y]]$ such that $f(\mathbf 0)=0$, and let $\Gamma_f$ (resp. $\Gamma_{\widetilde{f}}$) be the Newton polyhedron of $f$ (resp. $\widetilde{f}$) at the origin $\mathbf 0$ (resp. $\widetilde{\mathbf 0}$) of $K^2$ (resp. $\mathbb F_q^2$). Let $\widetilde{f}(x,y)$ be the reduction modulo $\mathcal M_K$ of $f(x,y)$. Fix an algebraic closure $\overline{K}$ of $K$, denote $\overline{\mathbb F}_q$ the residue field of $\overline{K}$, which is also an algebraic closure of $\mathbb F_q$. Up to the Weierstrass Preparation Theorem, we can assume that $f(x,y)$ is in $\mathcal O_K[[x]][y]$ and write a decomposition of $f(x,y)$ in $\overline{K}[[x,y]]$ as follows   
\begin{equation}\label{f-initial-expansion}
f(x,y)=cx^{\alpha}y^{\beta}\prod_{i=1}^k\prod_{\ell=1}^{r_i}\prod_{\tau=1}^{s_{i\ell}}f_{i\ell \tau}(x,y),
\end{equation}
where $c\in \mathcal O_K\setminus \mathcal M_K$, $\alpha, \beta\in\mathbb N$, and all $f_{i\ell\tau}$ are monic, non-smooth and irreducible in $\overline{K}[[x]][y]$. For the later convenience, we put $f_i=\prod_{\ell=1}^{r_i}f_{i\ell}$ and $f_{i\ell}=\prod_{\tau=1}^{s_{i\ell}}f_{i\ell \tau}$. Similarly as in \cite{AO}, each $f_{i\ell\tau}(x,y)$ admits an initial expansion in the following form
\begin{equation}\label{ff-initial-expansion}
f_{i\ell\tau}(x,y)=(y^{a_i}+\xi_{i\ell}x^{b_i})^{A_{i\ell\tau}}+\text{(higher terms)},
\end{equation}
where $\xi_{i\ell}\in \overline{K}^*$, $\xi_{i\ell}\not=\xi_{i\ell'}$ if $\ell\not=\ell'$, $a_i$, $b_i$, and $A_{i\ell\tau}$ are in $\mathbb N^*$ with $(a_i,b_i)$ coprime. Since all the components $f_{i\ell\tau}$ are non-smooth, we have $a_i\geq 2$ and $b_i\geq 2$ for $1\leq i\leq k$. Put $A_i=\sum_{\ell=1}^{r_i}A_{i\ell}$ and $A_{i\ell}=\sum_{\tau=1}^{s_{i\ell}}A_{i\ell\tau}$. 

\begin{condition}\label{assumption}
For $1\leq i\leq k$ and $1\leq \ell\leq r_i$, $\xi_{i\ell}\in \mathcal O_K\setminus \mathcal M_K$ and $f_{i\ell}$ are monic in $K[[x]][y]$.
\end{condition}

Sometimes, Condition \ref{assumption} is also called a {\it compatibility} as explained as follows. Indeed, under the condition, $\widetilde{f}$ also admits an analogous decomposition as (\ref{f-initial-expansion}) and (\ref{ff-initial-expansion}) with $f_{i\ell\tau}$ replaced by $\widetilde{f}_{i\ell\tau}$ (hence $\xi_{i\ell}$ replaced by $\widetilde{\xi}_{i\ell}$). By the previous assumptions (Condition \ref{assumption} included), the boundary of $\Gamma_f$ (resp. $\Gamma_{\widetilde{f}}$) has $k$ compact facets whose primitive normal vectors are $P_i=(a_i,b_i)^t$ with $1\leq i\leq k$. For convention, we shall denote $P_0=(1,0)^t$ and $P_{k+1}=(0,1)^t$. We assume that $P_0<P_1<\cdots<P_k<P_{k+1}$. Let $\Sigma$ be a regular simplicial cone subdivision with vertices $T_j=(c_j,d_j)^t$, $1\leq j\leq m$. It is called {\it admissible for $f$} (resp. {\it for $\widetilde{f}$}) if $\{P_1,\dots,P_k\}$ is contained in $\{T_1,\dots, T_m\}$. Consider the toric modification $\Phi: X_{\Sigma}\to K^2$ (resp. $\widetilde{\Phi}: \widetilde{X}_{\Sigma}\to \mathbb F_q^2$) associated to $\Sigma$. Then, $\Phi$ (resp. $\widetilde{\Phi}$) is called {\it admissible for $f$} (resp. {\it for $\widetilde{f}$}) if $\Sigma$ is admissible for $f$ (resp. for $\widetilde{f}$). The exceptional divisors of $\Phi$ (resp. $\widetilde{\Phi}$) are $E(T_j)$ (resp. $\widetilde{E}(T_j)$) for $1\leq j\leq m$. 

Let $E'$ (resp. $\widetilde{E}'$) be the closure of $\Phi^{-1}(f^{-1}(0)\setminus\{\mathbf 0\})$ (resp. $\widetilde{\Phi}^{-1}(\widetilde{f}^{-1}(\widetilde{0})\setminus\{\widetilde{\mathbf 0}\})$) in $X_{\Sigma}$ (resp. $\widetilde{X}_{\Sigma}$). We call $E'$  (resp. $\widetilde{E}'$) the {\it strict transform} of $f$ (resp. $\widetilde{f}$) under $\Phi$ (resp. $\widetilde{\Phi}$).

\medskip 
In the rest of this subsection, we fix a toric modification $\Phi: X_{\Sigma}\to K^2$ admissible for $f$.

\begin{notation}
For $1\leq j\leq m$, set 
$$E(T_j)^{\circ}:=E(T_j)\setminus \big(\bigcup_{j'\not=j}E(T_{j'})\cup E'\big).$$ 
We write $N_j$ or $N(T_j)$ (resp. $\nu_j-1$ or $\nu(T_j)-1$) for the multiplicity on $E(T_j)$ of $\Phi^*f$ (resp. $\Phi^*(dx\wedge dy)$) for $0\leq j\leq m+1$. By observation we have $N(T_0)=\alpha$, $N(T_{m+1})=\beta$. For $0\leq i\leq k$, let $j_i$ denote the index with $0\leq j_i \leq m+1$ and $T_{j_i}=P_i$. 
\end{notation}

\begin{lemma}\label{local-form}
Let $f(x,y)$ be in $\mathcal O_K[[x,y]]$ such that $f(\mathbf 0)=0$. Assume that $f(x,y)$ admits the decomposition (\ref{f-initial-expansion}) together with (\ref{ff-initial-expansion}) and Condition \ref{assumption}. Then the following hold.
\begin{itemize}
	\item[(i)] For any $z\in E(T_j)^{\circ}$, $0\leq j\leq m+1$, there exist a local system of coordinates $(u,v)$ at $z$ and units $U_j$, $V_j$ in $K[[u,v]]$ with $|U_j|=|V_j|=1$ such that 
	\begin{align*}
	\Phi^*f(u,v)&=u^{N_j}U_j(u,v),\\ 
	\Phi^*(dx\wedge dy)&=u^{\nu_j-1}V_j(u,v)du\wedge dv.
	\end{align*}

	\item[(ii)] For $\{z\}=E(T_j)\cap E(T_{j+1})$, $0\leq j\leq m$ there is a local system of coordinates $(u,v)$ at $z$ and units $U_{j,j+1}$, $V_{j,j+1}$ in $K[[u,v]]$ with $|U_{j,j+1}|=|V_{j,j+1}|=1$ such that 
	\begin{align*}
	\Phi^*f(u,v)&=u^{N_j}v^{N_{j+1}}U_{j,j+1}(u,v),\\
	\Phi^*(dx\wedge dy)&=u^{\nu_j-1}v^{\nu_{j+1}-1}V_{j,j+1}(u,v) du\wedge dv.	
	\end{align*}
\end{itemize}
Moreover, for $0\leq j\leq m+1$,
$$\nu_j=c_j+d_j,$$ 
and for $P_i\leq T_j \leq P_{i+1}$, $0\leq i\leq k$, 
$$N_j=c_j\alpha+c_j\sum_{t=1}^ir_tb_t+d_j\sum_{t=i+1}^kr_ta_t+d_j\beta.$$
\end{lemma}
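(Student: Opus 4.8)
The plan is to reduce everything to an explicit computation in the affine toric charts of $X_{\Sigma}$ and then to read off the multiplicities from the combinatorics of $\Sigma$ together with the initial expansions (\ref{f-initial-expansion})--(\ref{ff-initial-expansion}). Recall that $X_{\Sigma}$ is covered by the charts $K^2_{\sigma_j}$, $0\le j\le m$, on which $\Phi$ restricts to the monomial map $\Phi_{\sigma_j}\colon (x_j,y_j)\mapsto(x_j^{c_j}y_j^{c_{j+1}},\,x_j^{d_j}y_j^{d_{j+1}})$, with $E(T_j)=\{x_j=0\}$ and $E(T_{j+1})=\{y_j=0\}$ in this chart; the intersection point $E(T_j)\cap E(T_{j+1})$ is the origin of $K^2_{\sigma_j}$, while a point $z\in E(T_j)^{\circ}$ lies in $K^2_{\sigma_j}$ (for $1\le j\le m$; use $\sigma_0$ for $j=0$ and $\sigma_m$ for $j=m+1$) as $\{x_j=0,\ y_j=z_0\}$ with $z_0\neq 0$ and $z\notin E'$. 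Substituting the monomial map termwise, a monomial $x^my^n$ of $f$ becomes $x_j^{\langle T_j,(m,n)\rangle}y_j^{\langle T_{j+1},(m,n)\rangle}$, so that
$$
\Phi^*f=x_j^{\,N(T_j)}\,y_j^{\,N(T_{j+1})}\,h_j(x_j,y_j),\qquad N(T):=\min_{w\in\Gamma_f}\langle T,w\rangle,
$$
with $h_j\in\mathcal O_K[[x_j,y_j]]$ collecting the remaining monomials; and a direct Jacobian computation gives $\Phi^*(dx\wedge dy)=\det(\sigma_j)\,x_j^{\,c_j+d_j-1}\,y_j^{\,c_{j+1}+d_{j+1}-1}\,dx_j\wedge dy_j$, where $\det\sigma_j=1$ since $\Sigma$ is regular. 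Reading off orders of vanishing along $\{x_j=0\}$ already yields $\nu_j=c_j+d_j$ (and comparing the two charts containing $E(T_j)$ shows this is chart-independent), and $N_j=N(T_j)$.

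Next I verify the unit property. At the origin of $K^2_{\sigma_j}$ the only monomial of $f$ surviving in $h_j$ with vanishing exponents sits at the vertex $Q$ of $\Gamma_f$ dual to the two-dimensional cone $\sigma_j$; using the Minkowski decomposition $\Gamma_f=(\alpha,\beta)+\sum_{i,\ell,\tau}\Gamma_{f_{i\ell\tau}}$ and the fact that, by (\ref{ff-initial-expansion}), the vertex of $\Gamma_{f_{i\ell\tau}}$ on the $x$-axis (resp.\ $y$-axis) carries coefficient $\xi_{i\ell}^{A_{i\ell\tau}}$ (resp.\ $1$), one sees that the coefficient of $f$ at $Q$ is $c$ times a monomial in the $\xi_{i\ell}$, hence lies in $\mathcal O_K\setminus\mathcal M_K$ by Condition \ref{assumption}; thus $h_j$ is a unit of $K[[x_j,y_j]]$ with $|h_j(0,0)|=1$, which proves (ii) with $(u,v)=(x_j,y_j)$, $U_{j,j+1}=h_j$ and $V_{j,j+1}=1$. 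For (i), translate to $(u,v)=(x_j,y_j-z_0)$: then $\Phi^*f=u^{N_j}(v+z_0)^{N_{j+1}}h_j(u,v+z_0)$ and $\Phi^*(dx\wedge dy)=u^{\nu_j-1}(v+z_0)^{\nu_{j+1}-1}du\wedge dv$; since $z_0\neq 0$ and, because $z\notin E'$, $h_j(0,z_0)\neq 0$ (the locus $\{x_j=0,\ h_j(0,y_j)=0\}$ being exactly $E'\cap E(T_j)$ in this chart), the factors $U_j:=(v+z_0)^{N_{j+1}}h_j(u,v+z_0)$ and $V_j:=(v+z_0)^{\nu_{j+1}-1}$ are units; and for $z$ with integral coordinates reducing into $\widetilde E(T_j)^{\circ}$ — precisely the points relevant to the $p$-adic integral — the compatibility of $\Phi$ and $\widetilde\Phi$ furnished by Condition \ref{assumption} gives $z_0\in\mathcal O_K\setminus\mathcal M_K$ and $h_j(0,z_0)\in\mathcal O_K\setminus\mathcal M_K$, so $|U_j|=|V_j|=1$. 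The boundary indices $j=0$ and $j=m+1$, where $E(T_j)$ is the strict transform of a coordinate axis, are the special cases $T_j=(1,0)^t$, $(0,1)^t$ of the same computation, giving $\nu_0=\nu_{m+1}=1$, $N(T_0)=\alpha$, $N(T_{m+1})=\beta$.

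It remains to evaluate $N_j$ in closed form. By additivity of the support function over the Minkowski sum $\Gamma_f=(\alpha,\beta)+\sum_{i,\ell,\tau}\Gamma_{f_{i\ell\tau}}$, for $P_i\le T_j\le P_{i+1}$ one has $N_j=\langle T_j,(\alpha,\beta)\rangle+\sum_{i,\ell,\tau}\min_{w\in\Gamma_{f_{i\ell\tau}}}\langle T_j,w\rangle$; since $\Gamma_{f_{t\ell\tau}}$ has as its single compact facet the segment joining $(0,a_tA_{t\ell\tau})$ and $(b_tA_{t\ell\tau},0)$, with inner normal $P_t$, this last minimum is attained at the $x$-axis vertex when $T_j\ge P_t$ (i.e.\ $t\le i$) and at the $y$-axis vertex when $T_j\le P_t$ (i.e.\ $t\ge i+1$), because $P_0<\cdots<P_{k+1}$. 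Summing these contributions over $\ell$ and $\tau$ yields the displayed formula for $N_j$.

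The genuinely delicate point is the passage from ``$U_j,V_j$ are units of the local ring'' to ``$|U_j|=|V_j|=1$'': this is exactly where Condition \ref{assumption} is indispensable, for it forces the datum $c$, the coefficients $\xi_{i\ell}$ and the leading coefficients of the $f_{i\ell}$ into $\mathcal O_K\setminus\mathcal M_K$, so that the vertex coefficients of $\Gamma_f$ (monomials in this datum) reduce to nonzero elements of $\mathbb F_q$ and $\widetilde f$ shares the Newton polyhedron and the strict-transform pattern of $f$; without it these coefficients could slip into $\mathcal M_K$ and the asserted absolute values would drop below $1$. The remaining tasks — the Jacobian identity, chart-independence of $N_j$ and $\nu_j$, and the boundary cases $j=0,m+1$ — are routine.
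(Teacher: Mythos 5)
Your proof is correct and follows essentially the same route as the paper's: an explicit monomial computation in the charts $K^2_{\sigma_j}$, with Condition \ref{assumption} used to force the relevant vertex coefficients (products of $c$ and powers of the $\xi_{i\ell}$) into $\mathcal O_K\setminus\mathcal M_K$ so that the units have absolute value $1$; your support-function/Minkowski-sum packaging of $N_j$ is only a cosmetic variant of the paper's factor-by-factor expansion of $\Phi^*f_t$. Note only that your summation over $\ell,\tau$ actually produces $c_j\sum_{t\le i}b_tA_t+d_j\sum_{t>i}a_tA_t$ (exactly as in the paper's own computation and in Lemma \ref{lem41}), which coincides with the displayed formula involving $r_t$ only when all $A_{t\ell\tau}=1$ and $s_{t\ell}=1$.
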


\begin{proof}
(i) The cases $j=0$ and $j=m+1$ are trivial, let us consider the remaining cases. We work with the chart $(K_{\sigma_j}^2;x_j,y_j)$. First, it is straightforward that $\Phi^*(dx\wedge dy)=x_j^{\nu_j-1}y_j^{\nu_{j+1}-1}dx_j\wedge dy_j$. For $P_t< T_j$, resp. $T_j < P_t$, we have 
\begin{align*}
\Phi^*f_t(x_j,y_j)&=x_j^{b_tc_jA_t}y_j^{b_tc_{j+1}A_t}\Big(\prod_{\ell=1}^{r_t}\xi_{t\ell}^{A_{t\ell}}+x_jR_t\Big),\\
\text{resp.} \quad \Phi^*f_t(x_j,y_j)&=x_j^{a_td_jA_t}y_j^{a_td_{j+1}A_t}\big(1+x_jR_t\big),
\end{align*}
for some $R_t\in \overline{K}[[x_j,y_j]]$. For $T_j=P_i$, we have, on $(K_{\sigma_j}^2;x_j,y_j)$, $$\Phi^*f_i(x_j,y_j)=x_j^{a_ib_iA_i}y_j^{b_ic_{i+1}A_i}\left(\prod_{\ell=1}^{r_i}(\xi_{i\ell}+y_j)^{A_{i\ell}}+x_jR_i\right),$$ 
for some $R_i\in \overline{K}[[x_j,y_j]]$. By Condition \ref{assumption}, the strict transform $E$ of $f$ intersects with $E(P_i)$ at the points $(0,-\xi_{i\ell})$, $1\leq \ell\leq r_i$, in the chart $(K_{\sigma_j}^2;x_j,y_j)$. Thus
$$\Phi^*f(x_j,y_j)=x_j^{N_j}y_j^{N_{j+1}}\Big(\prod_{P_t\leq T_j}\prod_{\ell=1}^{r_t}\xi_{t\ell}^{A_{t\ell}}+\text{\rm(higher terms)}\Big),$$
with $N_j$ as in the lemma. For $z=(0,\lambda)\in E(T_j)^{\circ}$ with $\widetilde{\lambda}\not= 0$ in $\mathbb F_q$, putting $u=x_j$ and $v=-\lambda+y_j$ we have 
$$\Phi^*f=u^{N_j}U_j(u,v)$$
and
$$\Phi^*(dx\wedge dy)=u^{\nu_j-1}V_j(u,v) du\wedge dv,$$ 
where 
$$U_j(u,v)=\lambda^{N_{j+1}}\prod_{P_t\leq T_j}\prod_{\ell=1}^{r_t}\xi_{t\ell}^{A_{t\ell}}+\text{\rm(higher terms)}$$
and
$$V_j(u,v)=(\lambda+v)^{\nu_{j+1}-1},$$
which are formal series in $K[[u,v]]$. Again by Condition \ref{assumption}, $\widetilde{\lambda}^{N_{j+1}}\prod_{P_t\leq T_j}\prod_{\ell=1}^{r_t}\widetilde{\xi}_{t\ell}^{A_{t\ell}}$ is a nonzero element in $\mathbb F_q$. Since $\widetilde{\lambda}^{\nu_{j+1}-1}$ is also nonzero in $\mathbb F_q$, it follows that $|U_j(u,v)|=|V_j(u,v)|=1$ for all $(u,v)$ with $|u|<1$ and $|v|<1$.

(ii) The proof uses the same arguments as that of (i), in which we put $u=x_j$ and $v=y_j$.
\end{proof}

Before going further, let us recall the definition of Tschirnhausen  approximate polynomials of a monic polynomial, mentioned in \cite[Section 2]{AO}.

\begin{definition}
Let $g(y)=y^n+\sum_{i=1}^nc_iy^{n-i}$ be a monic polynomial in $\mathcal O_K[[x]][y]$, and let $a$ be in $\mathbb N^*$ dividing $n$. The {\it $n/a$-th Tschirnhausen  approximate polynomial} of $g$ is the monic polynomial $h(y)\in \mathcal O_K[[x]][y]$ of degree $a$ such that $\deg(f(y)-h(y)^{n/a})<n-a$.
\end{definition} 

Let us fix an $i_0$ and an $\ell_0$ with $1\leq i_0\leq m$ and $1\leq \ell_0\leq r_i$. By \cite[Section 4.3]{AO}, the $A_{i_0\ell_0}$-th Tschirnhausen approximate polynomial of $f_{i_0\ell_0}$ has the form
$$h_{i_0\ell_0}=y^{a_{i_0}}+\xi_{i_0\ell_0}x^{b_{i_0}}+\text{(higher terms)}.$$ 
Assume that $T_j=P_{i_0}$. As in the proof of Lemma \ref{local-form}, on the chart $(K_{\sigma_j}^2;x_j,y_j)$ we have
$$\Phi^*f(x_j,y_j)=cx_j^{N(P_i)}y_j^{N(T_{j+1})}\left((y_j+\xi_{i_0\ell_0})^{A_{i_0\ell_0}}+x_jR(x_j,y_j)\right)$$
and
$$\Phi^*h_{i_0\ell_0}(x_j,y_j)=x_j^{a_{i_0}b_{i_0}}y_j^{c_{j+1}b_{i_0}}(y_j+\xi_{i_0\ell_0}+x_jR'(x_j,y_j)),$$
where $c=\prod_{t<i_0}\prod_{\ell=1}^{r_t}\xi_{t\ell}^{A_{t\ell}}$, which is in $\mathcal O_K\setminus\mathcal M_K$ because of Condition \ref{assumption}. Also by Condition \ref{assumption}, $R(x_j,y_j)$ and $R'(x_j,y_j)$ are in $K[[x_j,y_j]]$. Similarly as in \cite{AO}, there is a canonical change of variables using the Tschirnhausen approximate polynomial $h_{i\ell}$ as follows
\begin{equation*}
\begin{cases}
u=x_j\\
v=y_j^{c_{j+1}b_{i_0}}(y_j+\xi_{i_0\ell_0}+x_jR'(x_j,y_j)).
\end{cases}
\end{equation*}
The local coordinates $(u,v)$ is called the {\it Tschirnhausen coordinates} at the intersection point of $E(P_{i_0})$ with $E'$ associated to $\ell_0$. By \cite[Proposition 7.4.1]{Ig}, $|du\wedge dv|=|dx_j\wedge dy_j|$. Since $\xi_{i\ell}\not=0$, in the Tschirnhausen coordinates $(u,v)$ the pullback $\Phi^*f$ has the following initial expansion
\begin{align}\label{Tschirnhausen}
f'[i_0,\ell_0](u,v):=\Phi^*f(u,v)=c'u^{N(P_{i_0})}\prod_{i=1}^{k'}\prod_{\ell=1}^{r'_i}f'_{i\ell}(u,v),
\end{align}
where $c'\in \mathcal O_K\setminus \mathcal M_K$, $f'_{\i\ell}(u,v)=\prod_{\tau=1}^{s'_{i\ell}}f'_{i\ell\tau}(u,v)$, and $f'_{i\ell\tau}(u,v)$ are monic and irreducible in $\overline{K}[[u]][v]$. The irreducible components $f'_{i\ell\tau}(u,v)$ have the following initial expansion
\begin{align*}\label{Tschirnhausen-irred} f'_{i\ell\tau}(u,v)=(v^{a'_i}+\xi'_{i\ell}u^{b'_i})^{A'_{i\ell\tau}}+\text{(higher terms)}
\end{align*}
in $\overline K[[u,v]]$, where $\xi'_{i\ell}$'s are in $\overline{K}^*$ and piecewise distinct. Remark that $a'_i$ for $1\leq i\leq k'$ are invariants, so if we use the $A_{i_0\ell_0\tau}$-th Tschirnhausen approximate polynomial of $f_{i_0\ell_0\tau}$ and express $\Phi^*f_{i_0\ell_0\tau}$ in the corresponding Tschirnhausen coordinates, we also get these numbers $a'_i$. But the latter situation is exactly the one in \cite[Section 4.3]{AO} with complex context, hence we can apply the method in \cite{AO} to show that $a'_i \geq 2$ for all $1\leq i\leq k'$. 

\begin{proposition}\label{main-formula}
Let $f(x,y)$ be in $\mathcal O_K[[x,y]]$ such that $f(\mathbf 0)=0$. Assume that $f(x,y)$ admits the decomposition (\ref{f-initial-expansion}) and (\ref{ff-initial-expansion}). Assume in addition that Condition \ref{assumption} is satisfied. Then $Z_0(f;s)$ is equal to 
$$\frac{q-1}{q^2}\left(\sum_{j=1}^m\frac{\#\widetilde{E}(T_j)^{\circ}}{q^{N_js+\nu_j}-1}+\sum_{j=0}^m\frac{q-1}{(q^{N_js+\nu_j}-1)(q^{N_{j+1}s+\nu_{j+1}}-1)}\right)
$$
plus
$$\sum_{i=1}^k\sum_{\ell=1}^{r_i}\int_{\mathcal M_K^{\oplus 2}}|f'[i,\ell](u,v)|^s|u^{\nu(P_i)-1}du\wedge dv|,$$
with the convention for the second sum that	the term corresponding to $j=0$ (resp. $j=m$) is $0$ if $\alpha=0$ (resp. $\beta=0$).
\end{proposition}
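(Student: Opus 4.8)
The plan is to feed the toric modification into the $p$-adic analytic change of variables formula and then group the resulting integral according to the reduction map, using Condition \ref{assumption} to control the combinatorics. Fix a regular simplicial cone subdivision $\Sigma$ admissible for $f$, with associated toric modification $\Phi\colon X_{\Sigma}\to K^2$. Since $\Phi$ is proper and a bi-analytic isomorphism outside measure-zero sets, $\Phi^{-1}(\mathcal M_K^{\oplus 2})$ is a compact $K$-analytic manifold and \cite[Proposition 7.4.1]{Ig} gives
$$Z_0(f;s)=\int_{\Phi^{-1}(\mathcal M_K^{\oplus 2})}|\Phi^*f|^s\,|\Phi^*(dx\wedge dy)|.$$
Now Condition \ref{assumption} makes $\Phi$ and $\widetilde\Phi$ compatible --- reduction intertwines them --- so a residue disc $D_{\widetilde z}$ of $X_{\Sigma}$ over $\widetilde z\in\widetilde X_{\Sigma}(\mathbb F_q)$ is contained in $\Phi^{-1}(\mathcal M_K^{\oplus 2})$ precisely when $\widetilde\Phi(\widetilde z)=\widetilde{\mathbf 0}$, and is disjoint from it otherwise. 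Since $\widetilde\Phi^{-1}(\widetilde{\mathbf 0})=\bigcup_{j=1}^{m}\widetilde E(T_j)$, this yields
$$Z_0(f;s)=\sum_{\widetilde z}\int_{D_{\widetilde z}}|\Phi^*f|^s\,|\Phi^*(dx\wedge dy)|,$$
a finite sum over the $\mathbb F_q$-points $\widetilde z$ of $\bigcup_{j=1}^{m}\widetilde E(T_j)$; distinct residue discs are automatically disjoint, so no inclusion--exclusion is needed.

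The core of the argument is to classify these $\widetilde z$ and evaluate each local integral. By Condition \ref{assumption} the strict transforms of the branches meet the exceptional divisors only along the $\widetilde E(P_i)$, each at the single point $(0,-\widetilde{\xi}_{i\ell})$; hence each $\widetilde z$ is of exactly one of three kinds: (a) $\widetilde z\in\widetilde E(T_j)^\circ$ for a unique $1\le j\le m$; (b) $\widetilde z$ is a corner $\widetilde E(T_j)\cap\widetilde E(T_{j+1})$ with $0\le j\le m$; (c) $\widetilde z=(0,-\widetilde{\xi}_{i\ell})\in\widetilde E(P_i)\cap\widetilde E'$ for some $1\le i\le k$, $1\le\ell\le r_i$. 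For each type one evaluates $\int_{D_{\widetilde z}}$ from the local normal forms, using repeatedly that $\int_{\mathcal M_K}|u|^t\,|du|=\frac{(q-1)/q}{q^{t+1}-1}$ for $\Re(t)>-1$ (the exponents occurring are $N_js+\nu_j-1$ with $\nu_j\ge1$) and $\int_{\mathcal M_K}|dv|=1/q$. In case (a), Lemma \ref{local-form}(i) provides coordinates with $D_{\widetilde z}=\mathcal M_K^{\oplus 2}$, $\Phi^*f=u^{N_j}U_j$ and $\Phi^*(dx\wedge dy)=u^{\nu_j-1}V_j\,du\wedge dv$, $|U_j|=|V_j|=1$; thus $\int_{D_{\widetilde z}}=\frac{(q-1)/q^2}{q^{N_js+\nu_j}-1}$, and multiplying by $\#\widetilde E(T_j)^\circ$ and summing over $j$ gives the first sum. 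In case (b), Lemma \ref{local-form}(ii) gives $\Phi^*f=u^{N_j}v^{N_{j+1}}U_{j,j+1}$ and $\Phi^*(dx\wedge dy)=u^{\nu_j-1}v^{\nu_{j+1}-1}V_{j,j+1}\,du\wedge dv$ with unit coefficients, so $\int_{D_{\widetilde z}}=\frac{(q-1)^2/q^2}{(q^{N_js+\nu_j}-1)(q^{N_{j+1}s+\nu_{j+1}}-1)}$, which assembles into the second sum. In case (c), passing to the Tschirnhausen coordinates $(u,v)$ at the point associated to $\ell$ one has $D_{\widetilde z}=\mathcal M_K^{\oplus 2}$, $\Phi^*f=f'[i,\ell](u,v)$, and $|du\wedge dv|=|dx_j\wedge dy_j|$ while $\Phi^*(dx\wedge dy)=u^{\nu(P_i)-1}\cdot(\text{unit})\cdot du\wedge dv$ because $y_j^{\nu_{j+1}-1}$ is a unit near $(0,-\xi_{i\ell})$ by Condition \ref{assumption}; hence $\int_{D_{\widetilde z}}=\int_{\mathcal M_K^{\oplus 2}}|f'[i,\ell](u,v)|^s\,|u^{\nu(P_i)-1}du\wedge dv|$, giving the last sum.

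It remains to justify the stated convention, which concerns the two extreme corners $\widetilde E(T_0)\cap\widetilde E(T_1)$ and $\widetilde E(T_m)\cap\widetilde E(T_{m+1})$. If $\alpha>0$ then the axis $\{x=0\}$ is a component of $f^{-1}(0)$, so $E(T_0)\subseteq E'$, the corner $\widetilde E(T_0)\cap\widetilde E(T_1)$ is a genuine normal crossing with $N_0=\alpha$ and $\nu_0=1$, and case (b) produces exactly the $j=0$ term $\frac{(q-1)^2/q^2}{(q^{\alpha s+1}-1)(q^{N_1s+\nu_1}-1)}$. If $\alpha=0$ then $\Phi^*f$ has multiplicity $0$ along $E(T_0)$, so near that corner $\Phi^*f$ depends on one coordinate only; since $\widetilde E(T_0)\not\subseteq\widetilde E'$ in this case, the corner is not deleted from $\widetilde E(T_1)^\circ$ and its contribution is already counted in $\#\widetilde E(T_1)^\circ$, so no separate $j=0$ term appears and it is set to $0$. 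The endpoint $\beta=0$ is symmetric. I expect the main obstacle to be not any individual integral but making the second step rigorous: verifying via the compatibility that $\Phi^{-1}(\mathcal M_K^{\oplus 2})$ is exactly the disjoint union of the residue discs over $\widetilde\Phi^{-1}(\widetilde{\mathbf 0})$, that each such disc sits over a smooth point of $\widetilde X_{\Sigma}$ and is therefore a polydisc in the coordinates furnished by Lemma \ref{local-form} or by the Tschirnhausen change of variables, and that the three-way classification is exhaustive and pairwise disjoint.
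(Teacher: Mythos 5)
Your proposal is correct and follows essentially the same route as the paper: apply the analytic change of variables formula to the toric modification, use Condition \ref{assumption} to identify $\Phi^{-1}(\mathcal M_K^{\oplus 2})$ with the union of residue discs over the $\mathbb F_q$-points of $\widetilde{\Phi}^{-1}(\widetilde{\mathbf 0})$, stratify those points into the three types (interior of an exceptional divisor, corner, intersection with the strict transform), and evaluate each local integral via Lemma \ref{local-form} and the Tschirnhausen coordinates. Your closing discussion of the $j=0$, $j=m$ convention is in fact slightly more explicit than the paper's, which simply declares the extreme corners empty when $\alpha=0$ or $\beta=0$.
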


\begin{proof}
By Condition \ref{assumption}, the two toric modifications $\Phi$ and $\widetilde{\Phi}$ associated to the same simplicial cone subdivision $\Sigma$ are compatible. More precisely, we have the following commutative diagram
$$
\begin{CD}
X_{\Sigma}(\mathcal O_K) @>\Phi>> \mathcal O_K^2\\
@V\pi_1VV  @VV\pi_1V\\
\widetilde{X}_{\Sigma} @>\widetilde{\Phi}>> \ (\mathbb F_q)^2,
\end{CD}
$$	
where the upper horizontal arrow is the restriction of the toric modification $\Phi$, and the vertical ones are induced by the reduction map $\mathcal O_K\to \mathbb F_q$. From the diagram we have  
$$\pi_1^{-1}(\widetilde{\Phi}^{-1}(\widetilde{\mathbf 0}))=\Phi^{-1}(\pi_1^{-1}(\widetilde{\mathbf 0}))=\Phi^{-1}(\mathcal M_K).$$ 
By the change of variables formula, 
\begin{align*}
Z_0(f;s)&=\int_{\Phi^{-1}(\mathcal M_K^{\oplus 2})}|\Phi^*f|^s|\Phi^*(dx\wedge dy)|\\
&=\sum_{a\in \widetilde{\Phi}^{-1}(\widetilde{\mathbf 0})}\int_{\pi_1^{-1}(a)}|\Phi^*f|^s|\Phi^*(dx\wedge dy)|.
\end{align*}
%By Hensel's lemma, for any $a\in \widetilde{\Phi}^{-1}(\widetilde{\mathbf 0})$, there exists an $\mathcal M_K$-analytic homeomorphism between $\pi_1^{-1}(a)$ and $\mathcal M_K^{\oplus 2}$ such that its Jacobian has order zero. 
Consider a stratification of $\widetilde{\Phi}^{-1}(\widetilde{\mathbf 0})$ as follows
\begin{equation}\label{decompPhibar}
\widetilde{\Phi}^{-1}(\widetilde{\mathbf 0})=\bigsqcup_{j=1}^m\widetilde{E}(T_j)^{\circ}\sqcup \bigsqcup_{j=0}^m\big(\widetilde{E}(T_j)\cap \widetilde{E}(T_{j+1})\big)\ \sqcup \bigsqcup_{i=1}^k\big(\widetilde{E}(P_i)\cap \widetilde{E}'\big)
\end{equation}
with the convention that $\widetilde{E}(T_0)\cap \widetilde{E}(T_1)=\emptyset$ (resp. $\widetilde{E}(T_m)\cap \widetilde{E}(T_{m+1})=\emptyset$) if $\alpha=0$ (resp. $\beta=0$). Because of (\ref{decompPhibar}), for $a\in \widetilde{\Phi}^{-1}(\widetilde{\mathbf{0}})$ we can consider the following cases.
	
{\it Case 1: $a\in \widetilde{E}(T_j)^{\circ}$.} In this case, $a$ lies in the intersection of the two charts $(\mathbb F_q)^2_{\sigma_j}$ and $(\mathbb F_q)^2_{\sigma_{j-1}}$; it has the coordinates $(0,\tilde{\lambda})$ in the chart $(\mathbb F_q)^2_{\sigma_j}$ and $(\tilde{\lambda}^{-1},0)$ in the chart $(\mathbb F_q)^2_{\sigma_{j-1}}$, for some $\tilde{\lambda}\in \mathbb F_q^*$. Lifting $\tilde{\lambda}$ to a unique element $\lambda\in \mathcal O_K\setminus \mathcal M_K$ we can easily prove that 
$$\pi_1^{-1}(a)\cap K_{\sigma_j}^2=\mathcal M_{K_{\sigma_j}}\times (\lambda+\mathcal M_{K_{\sigma_j}})\subseteq (\mathcal O_{K_{\sigma_j}})^2$$ 
and that  
$$\pi_1^{-1}(a)\cap K_{\sigma_{j-1}}^2=(\lambda^{-1}+\mathcal M_{K_{\sigma_{j-1}}})\times \mathcal M_{K_{\sigma_{j-1}}}\subseteq (\mathcal O_{K_{\sigma_{j-1}}})^2.$$ 
This equality clearly defines an $\mathcal M_K$-analytic homeomorphism between $\pi_1^{-1}(a)$ and $\mathcal M_K^{\oplus 2}$ with Jacobian having order zero. Remark that lifting $\tilde{\lambda}$ to $\lambda\in \mathcal O_K\setminus \mathcal M_K$ also means lifting $a$ to a unique point $z\in E(T_j)^{\circ}$. Under the above-mentioned homeomorphism $\pi_1^{-1}(a)\cong \mathcal M_K^{\oplus 2}$ and Lemma \ref{local-form} (i) we have 
$$\int_{\pi_1^{-1}(a)}|\Phi^*f|^s|\Phi^*(dx\wedge dy)|=\int_{\mathcal M_K^{\oplus 2}}|u^{N_js+\nu_j-1}||du\wedge dv|=\frac{q-1}{q^2}\cdot\frac{1}{q^{N_js+\nu_j}-1}.$$
	
{\it Case 2: $a\in \widetilde{E}(T_j)\cap \widetilde{E}(T_{j+1})\not=\emptyset$.} Using the same type of arguments as in Case 1, we have that there exists an $\mathcal M_K$-analytic homeomorphism between $\pi_1^{-1}(a)$ and $\mathcal M_K^{\oplus 2}$ such that its Jacobian has order zero. This together with Lemma \ref{local-form} (ii) gives us 
\begin{align*}\int_{\pi_1^{-1}(a)}|\Phi^*f|^s|\Phi^*(dx\wedge dy)|&=\int_{\mathcal M_K^{\oplus 2}}|u^{N_js+\nu_j-1}v^{N_{j+1}s+\nu_{j+1}-1}||du\wedge dv|\\
&=\frac{(q-1)^2}{q^2}\cdot\frac{1}{(q^{N_js+\nu_j}-1)(q^{N_{j+1}s+\nu_{j+1}}-1)}.
\end{align*}
	
{\it Case 3: $a=(0,-\widetilde{\xi}_{i\ell})\in \widetilde{E}(P_i)\cap \widetilde{E}'\cap (\mathbb F_q)_{\sigma_{j_i}}^2$.} Again as previous, there is an $\mathcal M_K$-analytic homeomorphism $\pi_1^{-1}(a)\cong \mathcal M_K^{\oplus 2}$ with Jacobian having order zero. Consider the Tschirnhansen coordinates $(u,v)$ at the lifting $z=(0,-\xi_{i\ell})$ of $a=(0,-\widetilde{\xi}_{i\ell})$. Using the above-mentioned homeomorphism $\pi_1^{-1}(a)\cong \mathcal M_K^{\oplus 2}$ with Jacobian having order zero, the Tschirnhansen coordinates $(u,v)$, as well as Condition \ref{assumption}, we have 
\begin{align*}\int_{\pi_1^{-1}(a)}|\Phi^*f|^s|\Phi^*(dx\wedge dy)|=\int_{\mathcal M_K^{\oplus 2}}|f'[i,\ell](u,v)|^s|u^{\nu(P_i)-1}du\wedge dv|.
\end{align*}
Summing up the previous integrals along the decomposition (\ref{decompPhibar}), except for the situations mentioned at the convention, we obtain Proposition \ref{main-formula}.	
\end{proof}

To continue applying Proposition \ref{main-formula}, it requires more condition as follows

\begin{condition}\label{assumption-2}
	For $1\leq i\leq k'$ and $1\leq \ell\leq r'_i$, $\xi'_{i\ell}\in \mathcal O_K\setminus \mathcal M_K$ and $f'_{i\ell}$ are monic in $K[[u]][v]$.
\end{condition}

Under Condition \ref{assumption-2}, the Newton polyhedron of $f'(u,v)$ (resp. of $\widetilde{f'}(u,v)$) again gives rise to a toric modification $\Phi'$ (resp. $\widetilde{\Phi'}$) associated to a regular simplicial cone subdivision $\Sigma'$ with vertices $T'_1<\cdots<T'_{m'}$ which is admissible for $f'$ (resp. for $\widetilde{f'}$).  

%***************************

\section{Newton-nondegenerate singularities}\label{resolution-nondegenerate}
\subsection{The $p$-adic zeta function of a nondegenerate singularity}
Let $f(x,y)$ be in $\mathcal O_K[[x,y]]$ such that $f(\mathbf 0)=0$, and let $\Gamma_f$ be the Newton polyhedron of $f$ at the origin $\mathbf 0$ of $K^2$. As before, we again fix an algebraic closure $\overline{K}$ of $K$, denote $\overline{\mathbb F}_q$ the residue field of $\overline{K}$, which is also an algebraic closure of $\mathbb F_q$.

\begin{definition}\label{def31}
We say that $f(x,y)$ is {\it Newton-nondegenerate over $K$} if 
$$f(x,y)=cx^{\alpha}y^{\beta}\prod_{i=1}^kf_i(x,y)$$ 
in $\overline{K}[[x,y]]$, where $c\not=0$, $\alpha, \beta\in \mathbb N$, and $f_i(x,y)$ for $1\leq i\leq k$ are non-smooth irreducible components admitting the following initial expansion 
\begin{equation}\label{initial-expansion} f_i(x,y)=\prod_{\ell=1}^{r_i}(y^{a_i}+\xi_{i\ell}x^{b_i})+\text{(higher terms)},
\end{equation}
with integers $a_i\geq 2$, $b_i\geq 2$, $(a_i,b_i)=1$, $\xi_{i\ell}\not=0$, $\xi_{i\ell}\not=\xi_{i\ell'}$ whenever $\ell\not=\ell'$. 	
\end{definition}

\begin{definition}\label{def32}
We say that $f(x,y)$ is {\it Newton-nondegenerate over $\mathbb F_q$} if 
$$\widetilde{f}(x,y)=c'x^{\alpha}y^{\beta}\prod_{i=1}^kf'_i(x,y)$$ 
in $\overline{\mathbb F}_q[[x,y]]$, where $c'\not=0$, $\alpha, \beta\in \mathbb N$, and $f'_i(x,y)$ are non-smooth irreducible components admitting the following initial expansion 
\begin{equation*} f'_i(x,y)=\prod_{\ell=1}^{r_i}(y^{a_i}+\xi'_{i\ell}x^{b_i})+\text{(higher terms)},
\end{equation*}
with integers $a_i\geq 2$, $b_i\geq 2$, $(a_i,b_i)=1$, and $\xi'_{i\ell}\not=0$, $\xi'_{i\ell}\not=\xi'_{i\ell'}$ in $\overline{\mathbb F}_q$ whenever $\ell\not=\ell'$. 	
\end{definition}

According to Definition \ref{def31} (resp. Definition \ref{def32}) the boundary of $\Gamma_f$ (resp. $\Gamma_{\widetilde{f}}$) has $k$ compact facets whose primitive normal vectors are $P_i=(a_i,b_i)^t$ with $1\leq i\leq k$. Let $\Sigma$ be a regular simplicial cone subdivision with vertices $T_j=(c_j,d_j)^t$ for $1\leq j\leq m$. Consider the toric modification $\Phi: X_{\Sigma}\to K^2$ (resp. $\widetilde{\Phi}: \widetilde{X}_{\Sigma}\to \mathbb F_q^2$) associated to $\Sigma$. As $f(x,y)$ is Newton-nondegenerate over $K$ (resp. over $\mathbb F_q$), $\Phi$ (resp. $\widetilde{\Phi}$) is exactly an (embedded) resolution of singularity of $f$ (resp. $\widetilde{f}$) at $\mathbf 0$ (resp. $\widetilde{\mathbf 0}$). If Condition \ref{assumption} is satisfied, and $f(x,y)$ is Newton-nondegenerate over $K$, then $f(x,y)$ is Newton-nondegenerate over $\mathbb F_p$; moreover, $\Phi$ and $\widetilde{\Phi}$ are compatible. This property implies that $\Phi$ has good reduction modulo $\mathcal M_K$, see e.g. \cite{Gre} and \cite{De} for $K=\mathbb Q_p$. In general, Condition \ref{assumption} is satisfied if $p$ sufficiently large. 

\begin{lemma}\label{strict-transform}
Let $f(x,y)$ be in $\mathcal O_K[[x,y]]$ such that $f(\mathbf 0)=0$ and $f$ is Newton-nondegenerate over $K$. Assume that $f(x,y)$ admits the decomposition $f(x,y)=cx^{\alpha}y^{\beta}\prod_{i=1}^kf_i(x,y)$ in $\overline{K}[[x,y]]$ with $f_i(x,y)$ having the initial expansion (\ref{initial-expansion}), and that Condition \ref{assumption} is satisfied. For the toric modification $\Phi$ admissible for $f$ constructed previously, the strict transform of $f$ does not intersect with $E(T_j)$ if $T_j\not\in \{P_1,\dots,P_k\}$, it meets $E(P_i)$ at exactly $r_i$ points. 	
\end{lemma}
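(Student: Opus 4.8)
The plan is to compute the pullback $\Phi^*f$ explicitly in each toric chart $(K^2_{\sigma_j}; x_j, y_j)$ and read off where the strict transform meets the exceptional locus. The key input is already assembled in the proof of Lemma \ref{local-form}: for a vertex $T_j$ with $P_t < T_j$ (resp. $T_j < P_t$) one has $\Phi^*f_t = x_j^{b_tc_jA_t}y_j^{b_tc_{j+1}A_t}(\prod_\ell \xi_{t\ell}^{A_{t\ell}} + x_jR_t)$ (resp. $\Phi^*f_t = x_j^{a_td_jA_t}y_j^{a_td_{j+1}A_t}(1 + x_jR_t)$), so the ``unit'' factor $\prod_\ell \xi_{t\ell}^{A_{t\ell}} + x_jR_t$ (resp. $1 + x_jR_t$) does not vanish anywhere on $E(T_j)$, and hence the strict transform contributes nothing along $E(T_j)$ when $T_j \notin \{P_1,\dots,P_k\}$. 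I would phrase this as: after extracting the monomial $x_j^{N_j}y_j^{N_{j+1}}$ from $\Phi^*f$, the residual factor evaluated on $\{x_j = 0\}$ is a nonzero constant (up to the $y_j^\bullet$-powers coming from other $f_t$'s, which only affect the intersection with $E(T_{j\pm 1})$, not the strict transform), so $E' \cap E(T_j)^\circ = \emptyset$ and more precisely $E'$ does not meet $E(T_j)$ at all for such $j$.

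For the case $T_j = P_i$, I would use the third displayed formula in the proof of Lemma \ref{local-form}: on the chart $(K^2_{\sigma_j}; x_j, y_j)$,
$$\Phi^*f(x_j,y_j) = x_j^{N(P_i)}y_j^{N(T_{j+1})}\Bigl(c\prod_{\ell=1}^{r_i}(\xi_{i\ell} + y_j)^{A_{i\ell}} + x_j R_i(x_j,y_j)\Bigr),$$
with $c \in \mathcal O_K \setminus \mathcal M_K$ by Condition \ref{assumption}. Setting $x_j = 0$, the residual factor becomes $c\prod_{\ell=1}^{r_i}(\xi_{i\ell} + y_j)^{A_{i\ell}}$, whose zeros on $E(P_i) = \{x_j = 0\}$ are exactly the points $y_j = -\xi_{i\ell}$, $1 \le \ell \le r_i$. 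Since the $\xi_{i\ell}$ are pairwise distinct (part of the initial-expansion hypothesis, and for Newton-nondegenerate $f$ the $A_{i\ell} = 1$ so there is no complication with the multiplicities either), these are $r_i$ distinct points, none of them equal to the intersection points $\{y_j = 0\}$ or $\{x_j = 0, \text{other chart}\}$ of $E(P_i)$ with the adjacent exceptional divisors (here one needs $\xi_{i\ell} \neq 0$, which holds). Hence $E'$ meets $E(P_i)$ in exactly these $r_i$ points.

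The only genuinely nontrivial point — and the one I would be most careful about — is making precise the claim that the local factorization is compatible with passing from $\overline{K}$ to $K$: a priori $f$ factors over $\overline{K}$ as $cx^\alpha y^\beta \prod f_i$, and the toric modification is defined over $K$, so I should check that the computation of $\Phi^*f_i$ above, and in particular the conclusion that $E'\cap E(P_i)$ consists of $r_i$ points, is a statement about the $\overline{K}$-points of $X_\Sigma$, i.e. really about the strict transform as a curve over $\overline{K}$. Condition \ref{assumption} ($\xi_{i\ell} \in \mathcal O_K \setminus \mathcal M_K$ and $f_{i\ell}$ monic in $K[[x]][y]$) is exactly what is needed to keep the relevant units $K$-rational and to guarantee the reduction $\widetilde{\xi}_{i\ell}$ is again nonzero and pairwise distinct, which ties the count over $K$ to the count over $\mathbb F_q$ and justifies the compatibility of $\Phi$ and $\widetilde\Phi$ invoked just before the lemma. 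Beyond that bookkeeping, the proof is a direct substitution $x_j = 0$ into the formulas already derived in Lemma \ref{local-form}, so I would keep it to a few lines, citing that lemma and \cite[Section 4.3]{AO} for the Newton-nondegenerate reduction $A_{i\ell\tau} = 1$.
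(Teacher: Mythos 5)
Your proposal is correct and follows essentially the same route as the paper: a chart-by-chart computation of $\Phi^*f$, observing that the residual (non-monomial) factor is a unit along $E(T_j)$ when $T_j\notin\{P_1,\dots,P_k\}$ and vanishes on $E(P_i)=\{x_j=0\}$ exactly at the $r_i$ distinct points $y_j=-\xi_{i\ell}$, which avoid the corners since $\xi_{i\ell}\neq 0$. The paper's proof is exactly this substitution argument, with Condition \ref{assumption} invoked in the same way to keep the relevant units and intersection points under control.
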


\begin{proof}
We study the behavior of the strict transform of $f$ in the chart $(K_{\sigma_j}^2;x_j,y_j)$. For $ P_t< T_j$, resp. $T_j < P_t$, we have 
\begin{align*}
\Phi^*f_t(x_j,y_j)&=x_j^{r_tb_tc_j}y_j^{r_tb_tc_{j+1}}\Big(\prod_{\ell=1}^{r_t}\xi_{t\ell}+x_jR_t(x_j,y_j)\Big),\\
\text{resp.} \quad \Phi^*f_t(x_j,y_j)&=x_j^{r_ta_td_j}y_j^{r_ta_td_{j+1}}\big(1+x_jR_t(x_j,y_j)\big),
\end{align*}
for some $R_t(x_j,y_j)\in \overline{K}[[x_j,y_j]]$. Thus, the equation of the strict transform in $(K_{\sigma_j}^2;x_j,y_j)$ is 
$$\prod_{P_t<T_j}\Big(\prod_{\ell=1}^{r_t}\xi_{t\ell}+x_jR_t(x_j,y_j)\Big)\cdot \prod_{T_j<P_t}\big(1+x_jR_t(x_j,y_j)\big)=0,$$
where the left hand side is actually an element of $K[[x_j,y_j]]$, hence the first part of the lemma is proved.

When $T_j=P_i$ for some $i$, we have, on $(K_{\sigma_j}^2;x_j,y_j)$, $$\Phi^*f_i(x_j,y_j)=x_j^{r_ia_ib_i}y_j^{r_ib_ic_{i+1}}\left(\prod_{\ell=1}^{r_i}(\xi_{i\ell}+y_j)+x_jR_i(x_j,y_j)\right),$$ 
for some $R_i(x_j,y_j)\in \overline{K}[[x_j,y_j]]$. By the hypothesis (\ref{assumption}), the strict transform of $f$ intersects transversally with $E(P_i)$ at the points $(0,-\xi_{i\ell})$, $1\leq \ell\leq r_i$, in the chart $(K_{\sigma_j}^2;x_j,y_j)$. 
\end{proof}

\begin{theorem}\label{main-formula-nondegenerate}
Assume that $f(x,y)$ and $\Phi$ are given as in Lemma \ref{strict-transform}, and that Condition \ref{assumption} is satisfied. Then $Z_0(f;s)$ is equal to $\frac{q-1}{q^2}$ times
$$\sum_{j=1}^m\frac{\#\widetilde{E}(T_j)^{\circ}}{q^{N_js+\nu_j}-1}+\sum_{j=0}^m\frac{q-1}{(q^{N_js+\nu_j}-1)(q^{N_{j+1}s+\nu_{j+1}}-1)}+\sum_{i=1}^k\frac{(q-1)r_i}{(q^{N(P_i)s+\nu(P_i)}-1)(q^{s+1}-1)},$$
with the convention for the second sum that	the term corresponding to $j=0$ (resp. $j=m$) is $0$ if $\alpha=0$ (resp. $\beta=0$).
\end{theorem}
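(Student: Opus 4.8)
The plan is to specialize Proposition \ref{main-formula} to the Newton-nondegenerate case, where the only remaining work is to evaluate the integrals attached to the points where the strict transform meets the exceptional divisors. First I would invoke Proposition \ref{main-formula} directly: since $f(x,y)$ admits the decomposition (\ref{f-initial-expansion}) together with (\ref{ff-initial-expansion}) (with $s_{i\ell}=1$ in the nondegenerate setting, so $f_{i\ell}=f_{i\ell\tau}$) and Condition \ref{assumption} holds, we get that $Z_0(f;s)$ equals $\frac{q-1}{q^2}$ times the first two sums as they already appear in the statement, plus $\sum_{i=1}^k\sum_{\ell=1}^{r_i}\int_{\mathcal M_K^{\oplus 2}}|f'[i,\ell](u,v)|^s|u^{\nu(P_i)-1}du\wedge dv|$. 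So everything reduces to computing each integral $\int_{\mathcal M_K^{\oplus 2}}|f'[i,\ell](u,v)|^s|u^{\nu(P_i)-1}du\wedge dv|$.

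The key point is that in the nondegenerate case the strict transform is \emph{smooth and transversal} to $E(P_i)$ at each of the $r_i$ points (this is exactly Lemma \ref{strict-transform}). Concretely, by the Tschirnhausen construction (\ref{Tschirnhausen}), the exponent $a'_i$-data collapses: after passing to Tschirnhausen coordinates $(u,v)$ at the point associated to $\ell$, the pullback $\Phi^*f$ is a unit times $u^{N(P_i)}$ times the defining equation of the (smooth) strict transform, which in these coordinates is simply $v$ times a unit of norm $1$. In other words $f'[i,\ell](u,v)=U(u,v)\,u^{N(P_i)}v$ with $|U|=1$ on $\mathcal M_K^{\oplus 2}$. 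Then the integral is $\int_{\mathcal M_K^{\oplus 2}}|u|^{N(P_i)s+\nu(P_i)-1}|v|^s|du\wedge dv|$, which factors as a product of two one-variable integrals of the standard form $\int_{\mathcal M_K}|u|^{a-1}|du|=\frac{q-1}{q}\cdot\frac{1}{q^a-1}$ applied with $a=N(P_i)s+\nu(P_i)$ and $a=s+1$ respectively. This yields $\frac{(q-1)^2}{q^2}\cdot\frac{1}{(q^{N(P_i)s+\nu(P_i)}-1)(q^{s+1}-1)}$ for each of the $r_i$ points, so the double sum contributes $\frac{q-1}{q^2}\cdot\sum_{i=1}^k\frac{(q-1)r_i}{(q^{N(P_i)s+\nu(P_i)}-1)(q^{s+1}-1)}$, matching the third term in the statement. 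Pulling out the common factor $\frac{q-1}{q^2}$ gives the claimed formula.

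The main obstacle is justifying that the strict transform is cut out by a single coordinate (up to a norm-$1$ unit) in the Tschirnhausen coordinates over the $p$-adic field $K$ rather than over $\overline{K}$, i.e. that the factor $v$ genuinely has its coefficients in $K$ with the reduction $\widetilde{v}$ still defining the reduced strict transform. This is where Condition \ref{assumption} is essential: it forces $\xi_{i\ell}\in\mathcal O_K\setminus\mathcal M_K$ and $f_{i\ell}$ monic over $K[[x]]$, so that the $A_{i\ell}$-th Tschirnhausen approximate polynomial $h_{i\ell}=y^{a_i}+\xi_{i\ell}x^{b_i}+\cdots$ has coefficients in $\mathcal O_K[[x]]$ and the change of variables (the Tschirnhausen coordinates) is $\mathcal M_K$-analytic with Jacobian of order zero, exactly as established in the proof of Proposition \ref{main-formula}, Case 3. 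One has to check that in the nondegenerate case the factor of $\Phi^*f$ beyond $u^{N(P_i)}$ is precisely $v$ (a single linear factor), which follows because $s_{i\ell}=1$ forces $A_{i\ell}=1$ after using the Tschirnhausen polynomial of $f_{i\ell}$ itself; there are no higher-order vanishing contributions along the strict transform, and the unit $U$ has norm $1$ by the same reduction-modulo-$\mathcal M_K$ argument used for $U_j$ and $V_j$ in Lemma \ref{local-form}. Once this local normal form is in hand, the rest is the routine one-variable $p$-adic integral evaluation and bookkeeping of the convention about $\alpha=0$ or $\beta=0$, which is already handled by Proposition \ref{main-formula}.
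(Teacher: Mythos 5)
Your proposal is correct and follows essentially the same route as the paper: the paper's proof also simply invokes Proposition \ref{main-formula} together with Lemma \ref{strict-transform}, notes the local normal form $f'[i,\ell](u,v)=c'u^{N(P_i)}vU(u,v)$ with $|U|=1$ in the Tschirnhausen coordinates, and counts the intersection points $\#(\widetilde{E}(P_i)\cap\widetilde{E}')=r_i$ and $\#(\widetilde{E}(T_j)\cap\widetilde{E}(T_{j+1}))=1$. Your explicit evaluation of the resulting product of one-variable integrals and your discussion of why Condition \ref{assumption} makes the factor $v$ defined over $K$ with unit reduction are exactly the (unstated) details behind the paper's ``immediate consequence.''
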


\begin{proof}
This is an immediate consequence of Proposition \ref{main-formula} and Lemma \ref{strict-transform}. Indeed, to get the formula we note that in this situation 
$$f'[i\ell](u,v)=c'u^{N(P_i)}vU(u,v)$$ 
in the Tschirnhausen $(u,v)$ at each point of $E(P_i)\cap E'$, with $|U(u,v)|=1$, and we also note that $\#\widetilde{E}(P_i)\cap \widetilde{E}'=r_i$ and $\#\widetilde{E}(T_j)\cap \widetilde{E}(T_{j+1})=1$ (except for the situations mentioned at the convention).	
\end{proof}

\begin{definition}
Let $\pi$ be an embedded resolution of singularity of a plane curve singularity $f(x,y)$ over a field of characteristic zero with the set $\{E_j\}_{j\in J}$ of irreducible components of the exceptional divisor and the strict transform. Then the {\it extended resolution graph} $\mathbf G=\mathbf G(f,\pi)$ is the graph in which the vertices correspond to $\{E_j\}_{j\in J}$ and two vertices $E_i$ and $E_j$ are connected by a single edge only when $E_i\cap E_j$ is nonempty.	
\end{definition}

In our situation (i.e. with hypotheses as in Lemma \ref{strict-transform}), $\Phi$ is an embedded resolution of singularity. We observe that $E(T_0)$ (resp. $E(T_{m+1})$) is a vertex of this graph if and only if $\alpha>0$ (resp. $\beta>0$). The below pictures illustrate extended resolution graphs of two Newton-nondegenerate singularities corresponding to ($\alpha=\beta=0$) and ($\alpha>0$, $\beta=0$), respectively, for admissible toric modifications $X_{\Sigma}\to K^2$, in which $r_i$ arrows starting from $E(P_i)$ may or may not meet one another. 

\begin{figure}[ht]
	\centering
	\begin{minipage}{3.1in}
		%\begin{center}
		\begin{picture}(220,50)(-90,-15)
		%\put(-120,-12){\line(1,0){70}}
		\put(-130,0){\line(1,0){302}}
		\put(-109,-12){\footnotesize{$E(P_i)$}}
		\put(-99,0){\vector(1,1){25}}
		\put(-98,0){\vector(-1,1){25}}
		\put(-97.8,0){\vector(-1,2){13}}
		\put(-98.6,23){$\dots$}
		%	\put(-140,30){\footnotesize{$E_{0i1}$}}
		%	\put(-80,30){\footnotesize{$E_{0ir_i}$}}
		
		\put(-48,-12){\footnotesize{$E(T_j)$}}
		\put(101,-12){\footnotesize{$E(P_k)$}}
		\put(110,0){\vector(1,1){25}}
		\put(110.5,0){\vector(-1,1){25}}
		\put(104,23){$\dots$}
		
		\put(162,-12){\footnotesize{$E(T_m)$}}
		
		\put(-131,-2.6){$\bullet$} 
		\put(-101,-2.6){$\bullet$}
		\put(-71,-2.6){$\bullet$}
		\put(-41,-2.6){$\bullet$}
		
		\put(170,-2.6){$\bullet$}
		\put(140,-2.6){$\bullet$}
		\put(108,-2.6){$\bullet$}
		\put(78,-2.6){$\bullet$}
		%--------------------------------
		\end{picture}
		%\end{center}
	\end{minipage}
\end{figure}

\begin{figure}[ht]
	\centering
	\begin{minipage}{3.1in}
		%\begin{center}
		\begin{picture}(220,50)(-90,-20)
		%\put(-120,-12){\line(1,0){70}}
		\put(-130,0){\line(1,0){302}}
		\put(-79,-12){\footnotesize{$E(P_i)$}}
		\put(-128.5,-2.6){\vector(0,1){28}} 
		\put(-140,30){\footnotesize{$E(T_0)$}}
		\put(-69,0){\vector(1,1){25}}
		\put(-68,0){\vector(-1,1){25}}
		\put(-67.8,0){\vector(-1,2){13}}
		\put(-69.2,23){$\dots$}
		%	\put(-110,30){\footnotesize{$E_{0i1}$}}
		%	\put(-50,30){\footnotesize{$E_{0ir_i}$}}
		
		\put(-18,-12){\footnotesize{$E(T_j)$}}
		\put(101,-12){\footnotesize{$E(P_k)$}}
		\put(110,0){\vector(1,1){25}}
		\put(110.5,0){\vector(-1,1){25}}
		\put(104,23){$\dots$}
		
		\put(162,-12){\footnotesize{$E(T_m)$}}
		
		\put(-131,-2.6){$\bullet$} 
		\put(-101,-2.6){$\bullet$}
		\put(-71,-2.6){$\bullet$}
		\put(-41,-2.6){$\bullet$}
		\put(-11,-2.6){$\bullet$}
		
		\put(170,-2.6){$\bullet$}
		\put(140,-2.6){$\bullet$}
		\put(108,-2.6){$\bullet$}
		\put(78,-2.6){$\bullet$}
		%--------------------------------
		\end{picture}
		%\end{center}
	\end{minipage}
	%	\caption{A toric graph of  singularity}\label{caption1}
\end{figure}

\subsection{Elimination of fake poles of $Z_0(f;s)$}
Let $f(x,y)$ be in $\mathcal O_K[[x,y]]$ such that $f(\mathbf 0)=0$ and $f(x,y)$ is Newton-nondegenerate over $K$, which has the initial expansion as in the previous subsection. Assume that Condition \ref{assumption} is satisfied and that we are given a toric modification $\Phi$ admissible for $f$ as above. 

\begin{proposition}\label{zeta-nondegenerate}
Assume that $f(x,y)$ and $\Phi$ are given exactly as in Lemma \ref{strict-transform}, and that Condition \ref{assumption} is satisfied. Then 
$$Z_0(f;s)=\frac{\mathcal P(q^s)}{(q^{\alpha s+1}-1)(q^{\beta s+1}-1)(q^{N(P_1)s+\nu(P_1)}-1)(q^{N(P_k)s+\nu(P_k)}-1)}+ \sum_{i=1}^{k-1}Z_i +\widetilde Z,$$ 
where 
$$Z_i=\frac{\mathcal P_i(q^s)}{\big(q^{N(P_i)s+\nu(P_i)}-1\big)\big(q^{N(P_{i+1})s+\nu(P_{i+1})}-1\big)},\quad 1\leq i\leq k,$$
for some univariate $\mathbb Q$-polynomials $\mathcal P$, $\mathcal P_i$, $1\leq i< k$, and  
\begin{gather*}
\widetilde Z=\frac{q-1}{q^2}\sum_{i=1}^k\left(\frac{q-r_i-1}{q^{N(P_i)s+\nu(P_i)}-1}+\frac{(q-1)r_i}{\big(q^{N(P_i)s+\nu(P_i)}-1\big)\big(q^{s+1}-1\big)}\right).
\end{gather*}
%$N(P_i)=a_i\sum_{t=1}^ir_tb_t+b_i\sum_{t=i+1}^kr_ta_t$,  $\nu(P_i)=a_i+b_i$ and.
In other words, there is a univariate $\mathbb Q$-polynomial $\mathcal Q$ such that
$$Z_0(f;s)=\frac{\mathcal Q(q^s)}{(q^{s+1}-1)(q^{\alpha s+1}-1)(q^{\beta s+1}-1)\prod_{1\leq i\leq k}(q^{N(P_i)s+\nu(P_i)}-1)}.$$
\end{proposition}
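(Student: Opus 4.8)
The plan is to begin with the closed formula for $Z_0(f;s)$ furnished by Theorem \ref{main-formula-nondegenerate} and to regroup its summands according to which divisor is responsible for each denominator. First I would make the point counts explicit: since each compact exceptional curve $E(T_j)$, $1\le j\le m$, is a $\mathbb P^1$ over $\mathbb F_q$, and since by Lemma \ref{strict-transform} the strict transform $\widetilde E'$ misses $\widetilde E(T_j)$ when $T_j\notin\{P_1,\dots,P_k\}$ and meets $\widetilde E(P_i)$ in exactly $r_i$ points, those being $\mathbb F_q$-rational by Condition \ref{assumption}, one gets $\#\widetilde E(T_j)^{\circ}=q-1$ for $T_j\notin\{P_1,\dots,P_k\}$ and $\#\widetilde E(P_i)^{\circ}=q-1-r_i$. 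Substituting this, the first-sum terms sitting over $T_j=P_i$ together with the entire third sum, once multiplied by the overall $\tfrac{q-1}{q^2}$, assemble precisely into $\widetilde Z$. What is left is $\tfrac{q-1}{q^2}$ times a \emph{chain part}: the sum of $\tfrac{q-1}{q^{N_js+\nu_j}-1}$ over the $T_j$ strictly between two consecutive $P_i,P_{i+1}$ (with $P_0=T_0$, $P_{k+1}=T_{m+1}$), plus the sum of $\tfrac{q-1}{(q^{N_js+\nu_j}-1)(q^{N_{j+1}s+\nu_{j+1}}-1)}$ over all adjacent pairs. Since every such interior $T_j$ lies between a unique pair $P_i<P_{i+1}$ and every adjacent pair $(T_j,T_{j+1})$ sits inside a unique segment $[P_i,P_{i+1}]$, the chain part splits as $\sum_{i=0}^{k}C_i$, with $C_i$ gathering the contributions from the chain of exceptional curves joining $E(P_i)$ and $E(P_{i+1})$; the cases $\alpha=0$ or $\beta=0$ are absorbed by the conventions already built into Theorem \ref{main-formula-nondegenerate}.

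The core of the argument is to show that the fake poles cancel inside each $C_i$, namely that
$$C_i=\frac{(q-1)\,R_i(q^s)}{(q^{N(P_i)s+\nu(P_i)}-1)\,(q^{N(P_{i+1})s+\nu(P_{i+1})}-1)}$$
for some $R_i\in\mathbb Q[q^s]$. On the segment $[P_i,P_{i+1}]$, Lemma \ref{local-form} shows that $N_\bullet$ and $\nu_\bullet$ are the restrictions of two fixed linear functionals, so writing $X_j:=q^{N_js+\nu_j}$ one has the multiplicative relation $X_{j-1}X_{j+1}=X_j^{\kappa_j}$ for every interior vertex $T_j$, where the positive integer $\kappa_j$ is characterized by $T_{j-1}+T_{j+1}=\kappa_jT_j$, the regularity of $\Sigma$ guaranteeing that such a $\kappa_j$ exists. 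I would prove the displayed identity by induction on the length of the chain. Peeling off the first interior vertex $T_j$, the three terms of $C_i$ carrying the factor $1/(X_j-1)$, one from the first sum and two from the second, add up to
$$\frac{q-1}{X_j-1}\cdot\frac{X_{j-1}X_{j+1}-1}{(X_{j-1}-1)(X_{j+1}-1)}=\frac{(q-1)\big(1+X_j+\dots+X_j^{\kappa_j-1}\big)}{(X_{j-1}-1)(X_{j+1}-1)},$$
the factor $X_j-1$ cancelling because $X_{j-1}X_{j+1}-1=X_j^{\kappa_j}-1$; what remains of $C_i$ is a shorter chain sum together with one extra term of the form $(q-1)/(X-1)$, and iterating the same type of cancellation, each intermediate denominator is in turn annihilated for the same reason, so that $C_i$ collapses to its two extreme factors. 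Carrying out this induction cleanly, in particular tracking the stray terms produced at each step (which requires strengthening the inductive hypothesis to record the value of the numerator along the zero loci of the end factors), is the main obstacle. A more conceptual but more delicate alternative is to compute, for each fake factor $q^{N_js+\nu_j}-1$, the residue of $Z_0(f;s)$ along its zero locus and observe that it vanishes because there $X_{j-1}X_{j+1}=X_j^{\kappa_j}=1$; that route, however, requires separately ruling out coincidences among the candidate poles.

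Granting the chain identity, the rest is bookkeeping. For $1\le i\le k-1$ the term $\tfrac{q-1}{q^2}C_i$ is exactly $Z_i$ with $\mathcal P_i(q^s)=\tfrac{(q-1)^2}{q^2}R_i(q^s)$. For the two extreme chains one has $N(P_0)=\alpha$, $N(P_{k+1})=\beta$ and $\nu(P_0)=\nu(P_{k+1})=1$, so $\tfrac{q-1}{q^2}(C_0+C_k)$, put over the common denominator $(q^{\alpha s+1}-1)(q^{\beta s+1}-1)(q^{N(P_1)s+\nu(P_1)}-1)(q^{N(P_k)s+\nu(P_k)}-1)$, becomes the displayed $\mathcal P$-term; when $k=1$ the factors $q^{N(P_1)s+\nu(P_1)}-1$ and $q^{N(P_k)s+\nu(P_k)}-1$ coincide, which only introduces a harmless repeated factor absorbed into $\mathcal P$. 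Adding $\widetilde Z$ back yields the first asserted formula for $Z_0(f;s)$. Finally, each of the $\mathcal P$-term, every $Z_i$ and $\widetilde Z$ has denominator dividing $(q^{s+1}-1)(q^{\alpha s+1}-1)(q^{\beta s+1}-1)\prod_{1\le i\le k}(q^{N(P_i)s+\nu(P_i)}-1)$, so reducing everything to this single denominator exhibits $Z_0(f;s)$ in the form $\mathcal Q(q^s)\big/\big((q^{s+1}-1)(q^{\alpha s+1}-1)(q^{\beta s+1}-1)\prod_{1\le i\le k}(q^{N(P_i)s+\nu(P_i)}-1)\big)$ with $\mathcal Q\in\mathbb Q[q^s]$, which is the second assertion.
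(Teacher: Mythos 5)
Your reduction to Theorem \ref{main-formula-nondegenerate} and the regrouping of its terms into $\widetilde Z$ plus one chain sum per segment $[P_i,P_{i+1}]$ is exactly how the paper's proof begins (its $S_i$, $\widetilde S_0$, $\widetilde S_k$). One bookkeeping slip first: when $\alpha=0$ the paper's count is $\#\widetilde E(T_1)^{\circ}=q$, not $q-1$ (similarly $\#\widetilde E(T_m)^{\circ}=q$ when $\beta=0$), because $E(T_0)$ is then not a component of the divisor of $\Phi^*f$ and its intersection with $E(T_1)$ is not excised. This extra $1/(X_1-1)$ is precisely what compensates for the $j=0$ double term that the convention sets to zero; you may use the count $q$ together with the convention, or the count $q-1$ together with the term $\frac{q-1}{(X_0-1)(X_1-1)}$ where $X_0=q^{\alpha s+1}=q$, but your text currently invokes both omissions at once and so drops a term.

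The substantive gap is in the chain identity, which is where the proof actually lives. Your first peeling step is correct: the three terms carrying $1/(X_j-1)$ at the first interior vertex combine, via $X_{j-1}X_{j+1}=X_j^{\kappa_j}$, into $(q-1)\bigl(1+X_j+\cdots+X_j^{\kappa_j-1}\bigr)/\bigl((X_{j-1}-1)(X_{j+1}-1)\bigr)$. But the subsequent steps are not ``the same type of cancellation.'' After the first step the leading term is $P_1/\bigl((X_0-1)(X_2-1)\bigr)$ with $P_1=1+X_1+\cdots+X_1^{\kappa_1-1}$ (indices relabelled along the chain), and combining it with the two terms carrying $1/(X_2-1)$ yields the numerator $P_1(X_3-1)+(X_0-1)X_3$ over $(X_0-1)(X_2-1)(X_3-1)$. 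Its divisibility by $X_2-1$ is no longer the one-line identity $X_1X_3-1=X_2^{\kappa_2}-1$: one must evaluate at the (simple) roots of $X_2-1$, substitute $X_3=X_1^{-1}$ and $X_0=X_1^{\kappa_1}$, and use the closed form $P_1=(X_1^{\kappa_1}-1)/(X_1-1)$ to see the expression vanish. In other words, the induction goes through only with a strengthened hypothesis recording the value of the accumulated numerator $Q_j$ on the zero locus of the trailing factor $X_{j+1}-1$ (it must equal $(X_0-1)/(X_j-1)$ there). You correctly flag that such a strengthening is needed but neither formulate nor verify it, so the crux of the argument is asserted rather than proved. For comparison, the paper does not telescope: it shows the numerator of each $S_i$ over the full product denominator is divisible by each interior factor $q^{N_js+\nu_j}-1$ using the last assertion of Lemma \ref{lem37}, and, because those factors are pairwise coprime only when $\sum_{t\le i}r_tb_t\ne\sum_{t>i}r_ta_t$, it must treat the degenerate case separately by the cyclotomic-multiplicity argument of Lemma \ref{lem2.4}. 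If you complete your induction you obtain a uniform proof that bypasses Lemma \ref{lem2.4} entirely, which would be a genuine improvement; as written, the key cancellation is missing.
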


\begin{proof}
We use the toric modification $\Phi$ admissible for $f$ as previous and apply Proposition \ref{main-formula-nondegenerate} to $\Phi$. First, we calculate the Cardinality of $\widetilde{E}(T_j)^{\circ}$ for $0\leq j\leq m+1$ in the following table.

\begin{center}
\renewcommand{\arraystretch}{1.5}
\begin{tabular}{|c|c|c|}
\hline 
{\quad } & {Cardinality} & {Conditions} \\ 
\hline
$\widetilde{E}(T_0)^{\circ}$, $\widetilde{E}(T_1)^{\circ}$ & $q-1$ & $\alpha>0$ \\
\hline
$\widetilde{E}(T_1)^{\circ}$ & $q$ & $\alpha=0$ \\
\hline
$\widetilde{E}(T_j)^{\circ}$ & $q-1$ & $1<j< m,\ T_j\not=P_i\ (\forall\ i)$\\
\hline
$\widetilde{E}(T_m)^{\circ}$ & $q$ & $\beta=0$ \\
\hline
$\widetilde{E}(T_m)^{\circ}$, $\widetilde{E}(T_{m+1})^{\circ}$ & $q-1$ & $\beta>0$ \\
\hline
$\widetilde{E}(P_i)^{\circ}$ & $q-r_i-1$ & $1\leq i\leq k$\\
\hline 
\end{tabular}
\end{center}
\medskip 
Thanks to this table, it follows from Proposition \ref{main-formula-nondegenerate} that 
$$Z_0(f;s)= \frac{(q-1)^2}{q^2}\sum_{i=1}^{k-1}S_i + \frac{q-1}{q^2}(\widetilde S_0+\widetilde S_k)+\widetilde Z,$$ 
where $\widetilde Z$ is given in the theorem, while $S_i$, $\widetilde{S}_0$ and $\widetilde{S}_k$ are defined as follows. For $1\leq i<k$,
\begin{align}\label{sum-Si}
S_i=\sum_{j=j_i}^{j_{i+1}-1}\frac{1}{(q^{N_js+\nu_j}-1)(q^{N_{j+1}s+\nu_{j+1}}-1)}+\sum_{j=j_i+1}^{j_{i+1}-1}\frac{1}{q^{N_js+\nu_j}-1}.
\end{align}
For $\alpha>0$, resp. $\alpha=0$, we have 
\begin{align*}
\widetilde S_0&=(q-1)\left(\sum_{j=0}^{j_1-1}\frac{1}{(q^{N_js+\nu_j}-1)(q^{N_{j+1}s+\nu_{j+1}}-1)}+\sum_{j=1}^{j_1-1}\frac{1}{q^{N_js+\nu_j}-1}\right),\\
\text{resp.}\quad \widetilde S_0&=\sum_{j=1}^{j_1-1}\frac{q-1}{(q^{N_js+\nu_j}-1)(q^{N_{j+1}s+\nu_{j+1}}-1)}+\frac{q}{q^{N_1s+\nu_1}-1}+\sum_{j=2}^{j_1-1}\frac{q-1}{q^{N_js+\nu_j}-1}.
\end{align*}
For $\beta>0$, resp. $\beta=0$, we have
\begin{align*}
\widetilde S_k&=(q-1)\left(\sum_{j=j_k}^m\frac{1}{(q^{N_js+\nu_j}-1)(q^{N_{j+1}s+\nu_{j+1}}-1)}+\sum_{j=j_k+1}^m\frac{1}{q^{N_js+\nu_j}-1}\right),\\
\text{resp.}\quad \widetilde S_k&=\sum_{j=j_k}^{m-1}\frac{p-1}{(q^{N_js+\nu_j}-1)(q^{N_{j+1}s+\nu_{j+1}}-1)}+\frac{q}{q^{N_ms+\nu_m}-1}+\sum_{j=j_k+1}^{m-1}\frac{q-1}{q^{N_js+\nu_j}-1}.
\end{align*}

In the case $\alpha=\beta=0$, we claim that 
$$\widetilde S_0=\frac{\mathcal Q_1(q^s)}{q^{N(P_1)s+\nu(P_1)}-1}$$
and that
$$\widetilde S_k=\frac{\mathcal Q_2(q^s)}{q^{N(P_k)s+\nu(P_k)}-1}$$
for some 1-variable polynomials $\mathcal Q_1$ and $\mathcal Q_2$ over $\mathbb Q$. It suffices to check for $\widetilde S_0$ because, as we can see, the checking way also works for $\widetilde S_k$. Write
\begin{equation*}%\label{quotient}
\widetilde S_0=\frac{(q-1)\left(\sum\limits_{j=1}^{j_1-1}\prod\limits_{\begin{smallmatrix}1\leq t\leq j_1\\ t\not=j,j+1\end{smallmatrix}}\big(q^{N_ts+\nu_t}-1\big)+\sum\limits_{j=2}^{j_1-1}\prod\limits_{\begin{smallmatrix}1\leq t\leq j_1 \\ t\not=j\end{smallmatrix}}\big(q^{N_ts+\nu_t}-1\big)\right)+q\prod\limits_{t=2}^{j_1}\big(q^{N_ts+\nu_t}-1\big)}{\prod\limits_{t=1}^{j_1}\big(q^{N_ts+\nu_t}-1\big)}.
\end{equation*}
The sum of the terms appearing in the numerator of $\widetilde S_0$ not divisible by $q^{N_1s+\nu_1}-1$ is 
\begin{align*}
(q-1)\prod_{t=3}^{j_1}\big(q^{N_ts+\nu_t}-1\big)+q\prod_{t=2}^{j_1}\big(q^{N_ts+\nu_t}-1\big)=\prod_{t=3}^{j_1}\big(q^{N_ts+\nu_t}-1\big)\big(q^{d_2N_1s+d_2\nu_1}-1\big).
\end{align*}
It follows that the numerator of $\widetilde S_0$ is divisible by $q^{N(T_1)s+\nu(T_1)}-1$. For $2\leq j<j_1$, the sum of the terms appearing in the numerator of $\widetilde S_0$ not divisible by $q^{N_js+\nu_j}-1$ is 
\begin{gather*}
(q-1)\left(\prod_{\begin{smallmatrix}1\leq t\leq j_1\\ t\not=j-1,j\end{smallmatrix}}\!\big(q^{N_ts+\nu_t}-1\big)+\prod_{\begin{smallmatrix}1\leq t\leq j_1\\ t\not=j,j+1\end{smallmatrix}}\big(q^{N_ts+\nu_t}-1\big)+\prod_{\begin{smallmatrix}1\leq t\leq j_1\\ t\not=j\end{smallmatrix}}\big(q^{N_ts+\nu_t}-1\big)\!\right)\\
=(q-1)\prod_{\begin{smallmatrix}1\leq t\leq j_1\\ t\not=j-1,j,j+1\end{smallmatrix}}\big(q^{N_ts+\nu_t}-1\big)\big(q^{(N_{j-1}+N_{j+1})s+(\nu_{j-1}+\nu_{j+1})}-1\big). 
\end{gather*}
By Lemma \ref{lem37}, $q^{(N_{j-1}+N_{j+1})s+(\nu_{j-1}+\nu_{j+1})}-1$ is divisible by $q^{N_js+\nu_j}-1$. Also by Lemma \ref{lem37}, all the polynomials $q^{N_js+\nu_j}-1$ with $1\leq j<j_1$ are pairwise coprime. This follows the claim for $\widetilde S_0$.

Let us now compute $S_i$ for $1\leq i<k$. Writing $S_i$ as a fraction with denominator 
$$\prod_{j_i\leq t\leq j_{i+1}}\big(q^{N_ts+\nu_t}-1\big),$$ 
the sum of the terms in its numerator that are not divisible by $q^{N_js+\nu_j}-1$ for $j_i<j<j_{i+1}$ is 
\begin{gather*}
\prod_{\begin{smallmatrix}j_i\leq t\leq j_{i+1}\\ t\not=j-1,j\end{smallmatrix}}\big(q^{N_ts+\nu_t}-1\big)+\prod_{\begin{smallmatrix}j_i\leq t\leq j_{i+1}\\ t\not=j,j+1\end{smallmatrix}}\big(q^{N_ts+\nu_t}-1\big)+\prod_{\begin{smallmatrix}j_i\leq t\leq j_{i+1}\\ t\not=j\end{smallmatrix}}\big(q^{N_ts+\nu_t}-1\big)\\
=\prod_{\begin{smallmatrix}j_i\leq t\leq j_{i+1}\\ t\not=j-1,j,j+1\end{smallmatrix}}\big(q^{N_ts+\nu_t}-1\big)\big(q^{(N_{j-1}+N_{j+1})s+(\nu_{j-1}+\nu_{j+1})}-1\big),
\end{gather*}
which is divisible by $q^{N(T_j)s+\nu(T_j)}-1$ due to Lemma \ref{lem37}. If $\sum_{t=1}^ir_tb_t\not=\sum_{t=i+1}^kr_ta_t$, it follows from Lemma \ref{lem37} that all the polynomials $q^{N_js+\nu_j}-1$ with $j_i<j<j_{i+1}$ are pairwise coprime, hence there exists a $1$-variable polynomial $\mathcal P_i$ over $\mathbb Q$ such that
\begin{gather*}
\frac{(q-1)^2}{q^2}S_i=\frac{\mathcal P_i(q^s)}{\big(q^{N(P_i)s+\nu(P_i)}-1\big)\big(q^{N(P_{i+1})s+\nu(P_{i+1})}-1\big)}=Z_i.
\end{gather*}
For the case where $\sum_{t=1}^ir_tb_t=\sum_{t=i+1}^kr_ta_t$, the previous assertion still holds true, but it is proved in another way performed in Lemma \ref{lem2.4}. 

Finally, remark that computing $\widetilde S_0$ and $\widetilde S_k$ when $\alpha>0$ and $\beta>0$ can use the same arguments as computing $S_i$ for $1\leq i<k$, but it is easier because the situation as in Lemma \ref{lem2.4} does not happen. More precisely, there exist $1$-variable polynomials $\mathcal Q_1$ and $\mathcal Q_2$ over $\mathbb Q$ such that
$$
\widetilde S_0=\frac{\mathcal Q_1(q^s)}{(q^{\alpha s+1}-1)(q^{N(P_1)s+\nu(P_1)}-1)}
$$
and
$$
\widetilde S_k=\frac{\mathcal Q_2(q^s)}{(q^{\beta s+1}-1)(q^{N(P_k)s+\nu(P_k)}-1)}.
$$
The current expressions of $\widetilde S_0$ and $\widetilde S_k$ also work for $\alpha=\beta=0$ because in this case both $q^{\alpha s+1}-1$ and $q^{\beta s+1}-1$ will reduce to $q-1$. The proposition is completely proved.
\end{proof}

\begin{lemma}\label{lem37}
For $j_i\leq j, \ell <j_{i+1}$, $j\not=\ell$, 
$$\frac{\nu_j}{N_j}=\frac{\nu_{\ell}}{N_{\ell}}\quad \text{if and only if}\quad \sum_{t=1}^ir_tb_t=\sum_{t=i+1}^kr_ta_t.$$ 
In particular, for $j\not=\ell$ satisfying either $1\leq j, \ell <j_1$ or $j_k\leq j, \ell \leq m$, we have $\nu_j/N_j\not=\nu_{\ell}/N_{\ell}$. Moreover, for $P_i< T_j< P_{i+1}$, $0\leq i\leq k$, it admits identifications of natural numbers 
$$\frac{N_{j-1}+N_{j+1}}{N_j}=\frac{\nu_{j-1}+\nu_{j+1}}{\nu_j}=\frac{c_{j-1}+c_{j+1}}{c_j}=\frac{d_{j-1}+d_{j+1}}{d_j}.$$
\end{lemma}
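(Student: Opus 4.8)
The plan is to reduce every assertion to a short computation with the numerical data of Lemma \ref{local-form}. Set $\mathfrak a_i:=\alpha+\sum_{t=1}^i r_tb_t$ and $\mathfrak b_i:=\beta+\sum_{t=i+1}^k r_ta_t$, so that Lemma \ref{local-form} reads $\nu_j=c_j+d_j$ for every $j$ and $N_j=c_j\mathfrak a_i+d_j\mathfrak b_i$ whenever $P_i\leq T_j\leq P_{i+1}$. For $1\leq j\leq m$ one has $c_j,d_j\geq 1$ (otherwise $T_j$ would coincide with $T_0$ or $T_{m+1}$) and $\mathfrak a_i+\mathfrak b_i\geq 2$ (as $k\geq 1$ and $a_t,b_t\geq 2$), so $N_j>0$ and the ratios $\nu_j/N_j$ are well defined.

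For the ``if and only if'', note that when $j_i\leq j,\ell<j_{i+1}$ both $T_j$ and $T_\ell$ lie in $[P_i,P_{i+1}]$, hence the same pair $(\mathfrak a_i,\mathfrak b_i)$ governs $N_j$ and $N_\ell$; then I would simply expand
\[
N_\ell\nu_j-N_j\nu_\ell=(c_\ell\mathfrak a_i+d_\ell\mathfrak b_i)(c_j+d_j)-(c_j\mathfrak a_i+d_j\mathfrak b_i)(c_\ell+d_\ell)=(\mathfrak a_i-\mathfrak b_i)(c_\ell d_j-c_jd_\ell).
\]
Since $T_j\neq T_\ell$ are primitive integral vectors, $c_\ell d_j-c_jd_\ell=\det(T_\ell,T_j)\neq 0$, so $\nu_j/N_j=\nu_\ell/N_\ell$ iff $\mathfrak a_i=\mathfrak b_i$, which is the displayed sum identity. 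The ``in particular'' clause is then the cases $i=0$ (range $1\leq j,\ell<j_1$), where the left-hand sum is empty while $\sum_{t=1}^k r_ta_t\geq 2$, and symmetrically $i=k$: in both $\mathfrak a_i\neq\mathfrak b_i$, so the ratios in question are pairwise distinct.

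For the ``moreover'' clause, fix $P_i<T_j<P_{i+1}$, i.e.\ $j_i<j<j_{i+1}$; then the neighbours $T_{j-1},T_{j+1}$ also lie in $[P_i,P_{i+1}]$ and are governed by $(\mathfrak a_i,\mathfrak b_i)$. Regularity of $\Sigma$ gives $\det(T_{j-1},T_j)=\det(T_j,T_{j+1})=1$; writing $T_{j+1}=aT_{j-1}+bT_j$ in the $\mathbb Z$-basis $\{T_{j-1},T_j\}$ and applying $\det(T_j,\,\cdot\,)$ forces $a=-1$, while $\det(T_{j-1},\,\cdot\,)$ gives $b=\det(T_{j-1},T_{j+1})>0$; hence $T_{j-1}+T_{j+1}=\kappa_jT_j$ for a unique $\kappa_j\in\mathbb N^*$. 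Reading this identity in each coordinate gives $\frac{c_{j-1}+c_{j+1}}{c_j}=\frac{d_{j-1}+d_{j+1}}{d_j}=\kappa_j$; adding the two coordinates gives $\frac{\nu_{j-1}+\nu_{j+1}}{\nu_j}=\kappa_j$; and substituting $t\in\{j-1,j,j+1\}$ into $N_t=c_t\mathfrak a_i+d_t\mathfrak b_i$ gives $\frac{N_{j-1}+N_{j+1}}{N_j}=\kappa_j$. So all four fractions equal the positive integer $\kappa_j$.

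I do not expect a real obstacle here: the content is the $2\times 2$ determinant identity above plus the standard fact that consecutive rays of a regular plane fan satisfy $T_{j-1}+T_{j+1}=\kappa_jT_j$. The only delicate point is the index bookkeeping — checking that the relevant pair $T_j,T_\ell$ (resp.\ triple $T_{j-1},T_j,T_{j+1}$) lies in a single interval $[P_i,P_{i+1}]$, so that one pair $(\mathfrak a_i,\mathfrak b_i)$ controls all the $N$'s involved, and that these $N$'s are positive.
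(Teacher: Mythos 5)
Your argument is correct and is exactly the (unexpanded) argument the paper has in mind: everything is read off from $\nu_j=c_j+d_j$ and $N_j=c_j\mathfrak a_i+d_j\mathfrak b_i$ of Lemma \ref{local-form}, the equivalence coming from the factorization $N_\ell\nu_j-N_j\nu_\ell=(\mathfrak a_i-\mathfrak b_i)\det(T_\ell,T_j)$ with $\det(T_\ell,T_j)\neq 0$, and the ``moreover'' clause from the relation $T_{j-1}+T_{j+1}=\det(T_{j-1},T_{j+1})\,T_j$ for consecutive rays of a regular fan; the paper's proof consists of precisely these two observations, stated without computation. Your bookkeeping (positivity of $c_j,d_j$ and of $N_j$, and the fact that $T_{j-1},T_j,T_{j+1}$ all lie in one cone $[P_i,P_{i+1}]$ so a single pair $(\mathfrak a_i,\mathfrak b_i)$ governs all three) is complete.

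One point you pass over too quickly. Your computation shows the ratios agree iff $\mathfrak a_i=\mathfrak b_i$, i.e.\ iff $\alpha+\sum_{t=1}^i r_tb_t=\beta+\sum_{t=i+1}^k r_ta_t$, whereas the displayed condition in the lemma omits $\alpha$ and $\beta$; these coincide only when $\alpha=\beta$ (in particular when $\alpha=\beta=0$). You assert the two are the same, and in the ``in particular'' clause you silently drop $\alpha$ and $\beta$ again. For $\alpha\neq\beta$ the statement as printed can genuinely fail: for $f=x^2(y^2+x^3)$ one has $\mathfrak a_0=\mathfrak b_0=2$, hence $N_j=2\nu_j$ for every $T_j$ with $P_0<T_j<P_1$, so all those ratios equal $1/2$ even though $0\neq r_1a_1$. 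This is an imprecision in the lemma's statement (shared by the paper's one-line proof) rather than an error in your algebra, but your proof should either record the corrected condition $\alpha+\sum_{t=1}^i r_tb_t=\beta+\sum_{t=i+1}^k r_ta_t$ or make explicit the hypothesis $\alpha=\beta=0$ under which the printed condition is equivalent.
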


\begin{proof}
This is an immediate consequence of Lemma \ref{local-form}.	The number $(d_{j-1}+d_{j+1})/d_j$ is a natural number because it is equal to $\det(T_{j-1},T_{j+1})$. 
\end{proof}

\begin{lemma}\label{lem2.4}
With the above notation and hypotheses, if $\sum_{t=1}^ir_tb_t=\sum_{t=i+1}^kr_ta_t$, then there exists a $1$-variable polynomial $\mathcal P_i^{\circ}$ over $\mathbb Q$ such that
\begin{gather*}
S_i=\frac{\mathcal P_i^{\circ}(q^s)}{\big(q^{N(P_i)s+\nu(P_i)}-1\big)\big(q^{N(P_{i+1})s+\nu(P_{i+1})}-1\big)}.
\end{gather*}

\end{lemma}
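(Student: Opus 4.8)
The plan is to imitate the argument used for the generic case $\sum_t r_tb_t\neq\sum_t r_ta_t$ but to replace the ``pairwise coprimeness'' of the polynomials $q^{N_js+\nu_j}-1$ (which fails here) by a more careful telescoping/partial-fractions manipulation. First I would recall from Lemma \ref{lem37} that under the hypothesis $\sum_{t=1}^ir_tb_t=\sum_{t=i+1}^kr_ta_t$ there is a common ratio $\nu_j/N_j=\mu$ for all $j_i\le j<j_{i+1}$; equivalently there is a positive rational $\mu$ with $\nu_j=\mu N_j$ for each such $j$, so every factor $q^{N_js+\nu_j}-1$ is of the form $q^{N_j(s+\mu)}-1$. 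Introducing the single new variable $w=q^{s+\mu}$, all the denominators occurring in $S_i$ become $w^{N_j}-1$, and $S_i$ becomes a function of $w$ alone:
\begin{gather*}
S_i=\sum_{j=j_i}^{j_{i+1}-1}\frac{1}{(w^{N_j}-1)(w^{N_{j+1}}-1)}+\sum_{j=j_i+1}^{j_{i+1}-1}\frac{1}{w^{N_j}-1}.
\end{gather*}

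Next I would exploit the convexity/linearity of the sequence $N_j$ coming from the toric picture: for $P_i<T_j<P_{i+1}$ the quantities $N_j$ are the values of a fixed linear functional on the primitive vectors $T_j=(c_j,d_j)^t$, and by Lemma \ref{lem37} we have the identity $(N_{j-1}+N_{j+1})/N_j=(c_{j-1}+c_{j+1})/c_j=(d_{j-1}+d_{j+1})/d_j\in\mathbb N$. In particular $N_j\mid N_{j-1}+N_{j+1}$, which gives the factorization $w^{N_{j-1}+N_{j+1}}-1=(w^{N_j}-1)\cdot\big(\text{polynomial in }w\big)$ that was already used in the generic case; here the same divisibility holds, so the ``bad'' terms in the numerator still cancel the middle factors $w^{N_j}-1$ for $j_i<j<j_{i+1}$. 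The point of Lemma \ref{lem2.4} is that after this cancellation the two outer factors $w^{N(P_i)}-1$ and $w^{N(P_{i+1})}-1$ need \emph{not} be coprime, so I cannot simply declare the remaining fraction reduced; instead I would argue that $S_i$, being a sum of fractions all of whose denominators divide $(w^{N(P_i)}-1)(w^{N(P_{i+1})}-1)$ after using the cyclotomic identity $w^{a}-1=\prod_{d\mid a}\Phi_d(w)$, is itself of the form $P_i^{\circ}(w)/\big((w^{N(P_i)}-1)(w^{N(P_{i+1})}-1)\big)$ for a polynomial $P_i^{\circ}\in\mathbb Q[w]$; finally substituting back $w=q^{s+\mu}=q^{s+\nu(P_i)/N(P_i)}$ and clearing the fractional exponent (using $N(P_i)\mu=\nu(P_i)$, $N(P_{i+1})\mu=\nu(P_{i+1})$) turns $w^{N(P_i)}-1$ into $q^{N(P_i)s+\nu(P_i)}-1$ and likewise for $i+1$, and turns $P_i^{\circ}(w)$ into a $\mathbb Q$-polynomial in $q^s$, yielding exactly the claimed shape.

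Concretely the key steps, in order, are: (1) invoke Lemma \ref{lem37} to get $\nu_j=\mu N_j$ for $j_i\le j\le j_{i+1}$ and set $w=q^{s+\mu}$; (2) rewrite $S_i$ as a rational function of $w$ with common denominator $\prod_{j_i\le t\le j_{i+1}}(w^{N_t}-1)$; (3) for each middle index $j$ with $j_i<j<j_{i+1}$, collect the numerator terms not divisible by $w^{N_j}-1$, observe (as in the generic computation) that their sum is a multiple of $w^{N_{j-1}+N_{j+1}}-1$, and use $N_j\mid N_{j-1}+N_{j+1}$ from Lemma \ref{lem37} to conclude $w^{N_j}-1$ divides the whole numerator; (4) after removing all middle factors, what remains is a fraction with denominator $(w^{N(P_i)}-1)(w^{N(P_{i+1})}-1)$ and numerator in $\mathbb Q[w]$ — here, unlike the generic case, no further reduction is claimed; (5) substitute $w\mapsto q^{s+\mu}$ and use $N(P_i)\mu=\nu(P_i)$, $N(P_{i+1})\mu=\nu(P_{i+1})$ to land in $\mathbb Q[q^s]$ and obtain $\mathcal P_i^{\circ}$.

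The main obstacle is step (3)--(4): one must be sure that dividing out the middle factors $w^{N_j}-1$ does not leave a genuinely non-polynomial remainder when $\gcd(N(P_i),N(P_{i+1}))>1$, i.e. that the \emph{numerator} really is divisible by each middle $w^{N_j}-1$ simultaneously rather than only ``up to'' shared cyclotomic factors with the outer terms. The safe way to handle this is to argue prime-by-prime over the cyclotomic factorization: for each primitive $d$-th root of unity $\zeta$ with $d\mid N_j$ but $d\nmid N(P_i),N(P_{i+1})$, check that $\zeta$ is a root of the numerator with at least the multiplicity of $w^{N_j}-1$ at $\zeta$, which follows from the local partial-fraction expansion of $S_i$ near $w=\zeta$ (only the three terms indexed by $j-1,j,j+1$ have a pole there, and the explicit residue computation — already performed in the display preceding Lemma \ref{lem2.4} — shows their combined contribution is regular at $\zeta$ thanks to $N_j\mid N_{j-1}+N_{j+1}$). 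Once this multiplicity bookkeeping is in place the rest is routine bookkeeping with $w=q^{s+\mu}$.
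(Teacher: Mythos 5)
Your overall route is the paper's: under the hypothesis one has $N_j=\delta\nu_j$ with $\delta=\sum_{t\le i}r_tb_t$ for every $j_i\le j\le j_{i+1}$, so all denominators become powers of a single quantity (the paper sets $T=q^{\delta s+1}$, so that $q^{N_js+\nu_j}=T^{\nu_j}$; your $w=q^{s+\mu}$ with $\mu=1/\delta$ is the same substitution up to a cosmetic fractional exponent), and the middle factors are cancelled using the telescoping identity together with the divisibility $\nu_j\mid\nu_{j-1}+\nu_{j+1}$ from Lemma \ref{lem37}. The genuine gap sits exactly at the step you flag as the ``main obstacle''. You propose to verify, for each root of unity $\zeta$, that the numerator vanishes at $\zeta$ ``with at least the multiplicity of $w^{N_j}-1$ at $\zeta$'', justified by the claim that only the three terms indexed by $j-1,j,j+1$ have a pole at $\zeta$. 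This is false in general: if $\zeta$ has order $d$ with $d\nmid\delta$ and $d$ divides $\nu_{j}$ for $\alpha_d\ge 2$ distinct middle indices $j$, then $3\alpha_d$ terms of $S_i$ have a pole at $\zeta$, and the product of the middle factors vanishes to order $\alpha_d$ there; order-one vanishing of the numerator, which is all your check delivers, is then not enough to divide out the whole product, precisely because the middle factors are not pairwise coprime.

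What closes the gap --- and what the paper's proof supplies --- is the arithmetic fact $\gcd(\nu_j,\nu_{j+1})=1$ (immediate from $\det(T_j,T_{j+1})=1$ for a regular subdivision), which forces the middle indices $i_1<\dots<i_s$ with $d\mid\nu_{i_l}$ to satisfy $i_{l+1}-i_l\ge 2$. Hence the triples of numerator terms attached to distinct $i_l$ are pairwise disjoint; each triple sums, via the telescoping identity, to a single term carrying the factor $T^{\nu_{i_l-1}+\nu_{i_l+1}}-1$, which is divisible by $T^{\nu_{i_l}}-1$ and still contains all the remaining factors $T^{\nu_{i_t}}-1$ with $t\ne l$; and every term of the numerator not belonging to one of these triples visibly contains all $s$ factors $T^{\nu_{i_t}}-1$. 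Therefore the numerator is divisible by $\varphi_d(T)^{\alpha_d}$ for every $d$, which is the multiplicity statement your step (3)--(4) actually requires. With this non-adjacency input added, your argument is complete and coincides with the paper's.
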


\begin{proof}
Put 
$$\delta := \sum_{t = 1}^i r_tb_t = \sum_{t = i+1}^n r_ta_t,$$ 
then we have 
$$N_j = c_j\sum_{t = 1}^i r_tb_t + d_j\sum_{t = i+1}^n r_ta_t = \delta(c_j + d_j)=\delta\nu_j.$$ 
Putting $T = q^{\delta s + 1}$, the sum $S_i$ defined in (\ref{sum-Si}) can be written as 
\begin{align*}
S_i = \sum_{j = j_i}^{j_{i+1} - 1}\frac{1}{(T^{\nu_j} - 1)(T^{\nu_{j+1}} - 1)} + \sum_{j = j_i + 1}^{j_{i+1} - 1}\frac{1}{(T^{\nu_j} - 1)}.
\end{align*}
For simplicity, we assume that $j_i = 1$ and $j_{i+1} = n$. Then we have
\begin{align*}
S_i = \dfrac{\sum\limits_{j = 1}^{n - 1}\left(\prod\limits_{1\leq k\leq n,k\ne j,j+1} (T^{\nu_k} - 1)\right) + \sum\limits_{j = 2}^{n - 1}\left(\prod\limits_{1\leq k\leq n,k\ne j} (T^{\nu_k} - 1)\right)}{(T^{\nu_1} - 1)(T^{\nu_n} - 1)\prod\limits_{j = 2}^{n - 1}(T^{\nu_j} - 1)}.
\end{align*}

It is a fact that 
$$T^{\nu_j} - 1 = \prod_{d|\nu_j}\varphi_d(T),$$ 
where $\varphi_d(T)$ for $d|\nu_j$ are cyclotomic polynomials and they are pairwise coprime. Then we have 
$$\prod\limits_{j = 2}^{n - 1}(T^{\nu_j} - 1) = \prod_{d \in \mathbb N}\left(\varphi_d(T)\right)^{\alpha_d},$$ 
where $\alpha_d$ is the number of elements divisible by $d$ of the set $\mathcal N:=\{\nu_2,\dots,\nu_{n-1}\}$. Take any $d$ with $\alpha_d=s>0$. To complete the proof it suffices to check that the numerator of $S_i$ is divisible by $\left(\varphi_d(T)\right)^{\alpha_d}$. Consider the subset $\{\nu_{i_1},\dots,\nu_{i_s}\}\subseteq \mathcal N$ consisting of all the elements divisible by $d$. Assume $i_1 < \cdots < i_s$. Consider the terms in the numerator of $S_i$. For any $j\in \{1,\dots,n-1\}\setminus \{i_l-1, i_l\mid 1\leq l\leq s\}$ we have $\{\nu_{i_1},\dots,\nu_{i_s}\}\subseteq \mathcal N\setminus\{j,j+1\}$, thus
$\prod\limits_{1\leq k\leq n,k\ne j,j+1} (T^{\nu_k} - 1)$ is divisible by $\prod_{l = 1}^s(T^{\nu_{i_l}} - 1)$. Since the latter is divisible by $\varphi_d(T)^{\alpha_d}$, it follows that $\prod\limits_{1\leq k\leq n,k\ne j,j+1}(T^{\nu_k} - 1)$ is divisible by $\varphi_d(T)^{\alpha_d}$. For $j\in \{2,\dots,n-1\}\setminus \{i_1,\dots,i_s\}$, the same argument shows that $\prod\limits_{1\leq k\leq n,k\ne j}(T^{\nu_k} - 1)$ is divisible by $\varphi_d(T)^{\alpha_d}$. The sum $R$ of the rest terms (i.e. the sum of all the terms not mentioned previously) in the numerator of $S_i$ is computed as follows
\begin{align}
R&=\sum_{l = 1}^s \left( \prod\limits_{\begin{smallmatrix}1\leq k\leq n\\ k\not=i_l-1,i_l\end{smallmatrix}}(T^{\nu_k} - 1) + \prod\limits_{\begin{smallmatrix}1\leq k\leq n\\ k\not=i_l,i_l+1\end{smallmatrix}}(T^{\nu_k} - 1)\right) + \sum_{l = 1}^s \left(\prod\limits_{\begin{smallmatrix}1\leq k\leq n\\ k\not=i_l\end{smallmatrix}}(T^{\nu_k} - 1)\right)\notag\\
%&= \sum_{l = 1}^s \left(\prod\limits_{\begin{smallmatrix}1\leq k\leq n\\ k\not=i_l-1,i_l,i_l+1\end{smallmatrix}}(T^{\nu_k} - 1)\right)\Big(T^{\nu_{i_l + 1}} - 1 + T^{\nu_{i_l - 1}} - 1 + (T^{\nu_{i_l - 1}} - 1)(T^{\nu_{i_l + 1}} - 1)\Big)\\
&=\sum_{l = 1}^s \left(\prod\limits_{\begin{smallmatrix}1\leq k\leq n\\ k\not=i_l-1,i_l,i_l+1\end{smallmatrix}}(T^{\nu_k} - 1)\right)(T^{\nu_{i_l - 1} + \nu_{i_l + 1}} - 1 ).\label{smallidentity}
\end{align}
Since $\gcd(\nu_j,\nu_{j+1}) = 1$ for all $1\leq j\leq n-1$, we have $|i_l - i_{l+1}| \ge 2$ for all $1\leq l\leq s-1$. Therefore $\prod\limits_{1\leq k\leq n, k\ne i_l - 1,i_l,i_l + 1}(T^{\nu_k} - 1)$ is divisible by $\prod\limits_{1\leq t\leq s,t\ne l}(T^{\nu_{i_t}} - 1)$. Moreover, $T^{\nu_{i_l - 1} + \nu_{i_l + 1}} - 1$ is divisible by $T^{\nu_{i_l}} - 1$, so it turns out that each term in (\ref{smallidentity}) is divisible by $\prod_{t = 1}^s(T^{\nu_{i_t}} - 1)$. Thus, the sum $R$ is divisible by $\prod_{t = 1}^s(T^{\nu_{i_t}} - 1)=\varphi_d(T)^{\alpha_d}$.

The above arguments have shown that the numerator of $S_i$ is divisible by $\varphi_d(T)^{\alpha_d}$, hence the lemma is proved.
\end{proof}

\subsection{Some explicit computations of $Z(f;s)$ and $Z_0(f;s)$ for special polynomials}
Let $f(x,y)$ be a polynomial in $\mathcal O_K[x,y]$ such that $f(\mathbf 0)=0$ and $f(x,y)$ is Newton-nondegenerate over $K$. Assume that $f(x,y)$ admits the decomposition 
$$f(x,y)=cx^{\alpha}y^{\beta}\prod_{i=1}^k\prod_{\ell=1}^{r_i}f_{i\ell}(x,y)$$ 
in $K[[x,y]]$ with $c\in \mathcal O_K\setminus \mathcal M_K$ and $f_{i\ell}(x,y)$ having the initial expansion 
\begin{equation*} f_{i\ell}(x,y)=y^{a_i}+\xi_{i\ell}x^{b_i}+\text{(higher terms)},
\end{equation*}
in $K[[x,y]]$ with integers $a_i\geq 2$, $b_i\geq 2$, $(a_i,b_i)=1$, $\xi_{i\ell}\not=0$, $\xi_{i\ell}\not=\xi_{i\ell'}$ whenever $\ell\not=\ell'$. These conditions are a bit excessive but they are necessary to get a result analogous as the one in Proposition \ref{zeta-nondegenerate} for $Z(f;s)$. Remark that, besides requiring information about irreducible exceptional divisors, their intersections with the strict transforms, the description of $Z(f;s)$ also requires to know the whole strict transforms, while that of $Z_0(f;s)$ does not.

\begin{proposition}\label{formula-nondegenerate-polynomial}
Let $f(x,y)$ be as previous in this subsection and $\Phi$ as in Lemma \ref{strict-transform}. Assume that Condition \ref{assumption} is satisfied. Then $Z(f;s)$ is equal to 
$$
\frac{q^2-\#V(\widetilde{f})}{q^2}+\frac{q-1}{q^2}\sum_{j=1}^m\frac{\#\widetilde{E}(T_j)^{\circ}}{q^{N_js+\nu_j}-1}
$$
plus
$$\frac{(q-1)^2}{q^2}\left(\sum_{j=0}^m\frac{1}{(q^{N_js+\nu_j}-1)(q^{N_{j+1}s+\nu_{j+1}}-1)}+\sum_{i=1}^k\frac{r_i}{(q^{N(P_i)s+\nu(P_i)}-1)(q^{s+1}-1)}\right),
$$
with the convention for the second sum that	the term corresponding to $j=0$ (resp. $j=m$) is $0$ if $\alpha=0$ (resp. $\beta=0$).
\end{proposition}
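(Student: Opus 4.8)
The plan is to run through the same toric modification $\Phi$ admissible for $f$ that was used in the proof of Proposition~\ref{zeta-nondegenerate}, but this time integrating over $\Phi^{-1}(\mathcal O_K^2)$ rather than $\Phi^{-1}(\mathcal M_K^{\oplus 2})$, so as to compute $Z(f;s)=\int_{\mathcal O_K^2}|f|^s|dx|$ by the analytic change of variables formula. First I would observe that, since $\Phi$ is an isomorphism away from measure-zero sets and is compatible with its reduction $\widetilde\Phi$ (Condition~\ref{assumption}), one has the fibration $\pi_1\colon X_\Sigma(\mathcal O_K)\to\widetilde X_\Sigma$ over $\mathbb F_q$-points, and $\mathcal O_K^2$ decomposes as the disjoint union of the fibers $\pi_1^{-1}(a)$ over $a\in\widetilde X_\Sigma(\mathbb F_q)$. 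The crucial difference from Proposition~\ref{main-formula} is that $\widetilde X_\Sigma$ now contains points not lying over $\widetilde{\mathbf 0}$: these split into the $\mathbb F_q$-points where $\widetilde f\circ\widetilde\Phi$ does not vanish (contributing measure $1$ to the integrand, i.e.\ contributing $q^{-2}$ times their count to $Z(f;s)$) and the $\mathbb F_q$-points lying on $\widetilde E'$ but off the exceptional locus, which correspond to the smooth points of $V(\widetilde f)\setminus\{\widetilde{\mathbf 0}\}$ and contribute the standard smooth-point integral.

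Next I would carry out the bookkeeping of the strata. Over $\widetilde X_\Sigma$, partition the $\mathbb F_q$-points into: (a) points where $\widetilde f\circ\widetilde\Phi\neq 0$; (b) points of $\widetilde E(T_j)^\circ$ for $1\le j\le m$; (c) the corner points $\widetilde E(T_j)\cap\widetilde E(T_{j+1})$ for $0\le j\le m$ (with the usual convention killing $j=0$ if $\alpha=0$ and $j=m$ if $\beta=0$); (d) the points $\widetilde E(P_i)\cap\widetilde E'$, of which there are $r_i$ by Lemma~\ref{strict-transform}; and (e) the smooth points of $\widetilde E'$ away from the exceptional divisor. Over each point in (a) the local integral is $q^{-2}$; summing gives $(q^2-\#V(\widetilde f))/q^2$ after one checks that the total mass of $\mathcal O_K^2$ not hitting types (b)--(e) equals the count of $a$ with $(\widetilde f\circ\widetilde\Phi)(a)\ne0$, which in turn equals $q^2-\#V(\widetilde f)$ since $\Phi$ is generically an isomorphism and $\#\{a\in\widetilde X_\Sigma(\mathbb F_q):(\widetilde f\circ\widetilde\Phi)(a)=0\}$ is the number of $\mathbb F_q$-points of the total transform of $V(\widetilde f)$. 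For (b), (c), (d) the local computations are \emph{verbatim} Cases 1, 2, 3 of the proof of Proposition~\ref{main-formula} together with Lemma~\ref{local-form} and the Tschirnhausen normal form $f'[i\ell](u,v)=c'u^{N(P_i)}vU(u,v)$ from the proof of Theorem~\ref{main-formula-nondegenerate}, giving respectively $\tfrac{q-1}{q^2}\cdot\tfrac{1}{q^{N_js+\nu_j}-1}$, $\tfrac{(q-1)^2}{q^2}\cdot\tfrac{1}{(q^{N_js+\nu_j}-1)(q^{N_{j+1}s+\nu_{j+1}}-1)}$, and $\tfrac{(q-1)^2}{q^2}\cdot\tfrac{1}{(q^{N(P_i)s+\nu(P_i)}-1)(q^{s+1}-1)}$ per point; summing over the $\#\widetilde E(T_j)^\circ$, the single corner points, and the $r_i$ intersection points yields exactly the three displayed sums. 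For (e), the integrand on a fiber over a smooth $\mathbb F_q$-point of $\widetilde E'$ (off the exceptional divisor) is $|u|^s|du\wedge dv|$ with $u$ a local equation of $E'$; each such fiber contributes $\tfrac{q-1}{q}\cdot\tfrac{1}{q^{s+1}-1}$ — but here I must be careful: these contributions are \emph{not} separately displayed in the statement, so I need to verify they are already absorbed, which happens precisely because the cardinalities $\#\widetilde E(T_j)^\circ$ in stratum (b) are taken relative to $E'$ and the smooth-point-of-$\widetilde E'$ fibers get re-expressed — the cleanest route is to \emph{not} introduce stratum (e) at all but instead to note that $\int_{\mathcal O_K^2}=\int_{\mathcal M_K^{\oplus 2}}+\int_{\mathcal O_K^2\setminus\mathcal M_K^{\oplus2}}$ and handle the second piece directly.

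Indeed, the slicker organization I would actually adopt is: write $\mathcal O_K^2=\mathcal M_K^{\oplus2}\sqcup(\mathcal O_K\setminus\mathcal M_K)\times\mathcal M_K\sqcup\mathcal M_K\times(\mathcal O_K\setminus\mathcal M_K)\sqcup(\mathcal O_K\setminus\mathcal M_K)^2$, apply Proposition~\ref{main-formula}/Theorem~\ref{main-formula-nondegenerate} to the first piece, and compute the remaining three pieces directly by splitting further along fibers of the reduction map: a point $\widetilde a\in\mathbb F_q^2\setminus\{\widetilde{\mathbf 0}\}$ contributes $q^{-2}|\widetilde f(\widetilde a)|^s$ if $\widetilde a\notin V(\widetilde f)$, contributes a smooth-point integral if $\widetilde a$ is a smooth point of $V(\widetilde f)$, and contributes the exceptional-divisor-type integrals only when $\widetilde a$ is a singular point of $V(\widetilde f)$ other than the origin. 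But by the Newton-nondegeneracy hypothesis the only singular point of $V(\widetilde f)$ that matters in $\mathcal M_K^{\oplus2}$ is already accounted for, and the total count of smooth-versus-nonvanishing points over $\mathbb F_q^2\setminus\{\widetilde{\mathbf0}\}$ combines cleanly to $q^2-\#V(\widetilde f)$ plus the smooth-point tails, which, after summing the geometric series $\sum_{m\ge1}\#\{\text{smooth pts}\}\cdot q^{-2}\cdot(q-1)q^{-1}q^{-ms}$, reorganizes into the stated closed form. The main obstacle I anticipate is precisely this last bookkeeping: carefully matching the count of $\mathbb F_q$-points over which the integrand is a pure power of a local coordinate against $\#V(\widetilde f)$ and the combinatorics of $\widetilde E(T_j)^\circ$, so that the smooth-point contributions of $\widetilde E'$ neither get double-counted nor dropped — in the non-local zeta function these are genuinely present (unlike for $Z_0$, where the discussion after Theorem~2 notes we deliberately avoid needing the number of components of $E'$), which is exactly why the extra hypothesis that the $f_{i\ell}$ already split over $K$ with initial forms $y^{a_i}+\xi_{i\ell}x^{b_i}$ is imposed at the start of this subsection, and I would use that hypothesis to pin down $\#V(\widetilde f)$ and the geometry of $\widetilde E'$ unambiguously.
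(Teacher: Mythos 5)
Your overall strategy coincides with the paper's: the printed proof is a one-line reference back to Theorem~\ref{main-formula-nondegenerate}, ``taking into account all the points of the strict transform of $\widetilde{\Phi}$'', i.e.\ exactly the fiberwise decomposition over the $\mathbb F_q$-points of $\widetilde X_{\Sigma}$ that you set up, with the new stratum of points where $\widetilde f\circ\widetilde\Phi\neq 0$ contributing $(q^2-\#V(\widetilde f))/q^2$ and the exceptional strata treated verbatim as in Cases 1--3 of Proposition~\ref{main-formula}.

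There is, however, a genuine gap, and it sits exactly where you flag ``the main obstacle'': the $\mathbb F_q$-points of $\widetilde E'$ off the exceptional locus, equivalently the points of $V(\widetilde f)\setminus\{\widetilde{\mathbf 0}\}$. You assert that their contributions are ``already absorbed'' and that the bookkeeping ``reorganizes into the stated closed form'', but you never carry this out, and for the formula exactly as displayed it cannot be carried out. Each such point (assuming it is a smooth point of $V(\widetilde f)$, which is all the hypotheses give you away from the origin) contributes $\frac{q-1}{q^2}\cdot\frac{1}{q^{s+1}-1}$ --- note the $q^{-2}$ for the two-dimensional residue polydisc, not the $q^{-1}$ you wrote --- and there are $\#V(\widetilde f)-1$ of them; no term of the displayed right-hand side accounts for them, since the cardinalities $\#\widetilde E(T_j)^{\circ}$ exclude $E'$ by definition and the last sum counts only the $r_i$ intersection points of $\widetilde E'$ with the compact exceptional divisors. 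Concretely, the displayed right-hand side equals $\frac{q^2-\#V(\widetilde f)}{q^2}+Z_0(f;s)$, which at $s=0$ gives $\frac{q^2-\#V(\widetilde f)+1}{q^2}\neq 1=Z(f;0)$ unless $\#V(\widetilde f)=1$; the missing summand is precisely $\frac{(q-1)(\#V(\widetilde f)-1)}{q^2(q^{s+1}-1)}$ under the smoothness assumption just mentioned. (The worked examples and the unnumbered proposition that follows do reinstate these contributions, via the factor $q^{N(P_i)s+\nu(P_i)}$ in the numerator of $\widetilde Z$ together with modified $\widetilde S_0$, $\widetilde S_k$.) A complete proof must therefore either add this summand explicitly or establish a precise count of the $\mathbb F_q$-points on each branch of $\widetilde E'$ and show how they regroup; your draft does neither, and the claim that the stated closed form absorbs them is false.
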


\begin{proof}
Arguments are the same as those in the proof of Proposition \ref{main-formula-nondegenerate} with taking into account all the points of the strict transform of $\widetilde{\Phi}$.	
\end{proof}

\begin{proposition}
Let $f(x,y)$ be as previous and $\Phi$ as in Lemma \ref{strict-transform}. Assume that Condition \ref{assumption} is satisfied. Then $Z(f;s)$ is equal to
$$\frac{q^2-\#V(\widetilde{f})}{q^2}+\frac{\mathcal P(q^s)}{(q^{\alpha s+1}-1)(q^{\beta s+1}-1)(q^{N(P_1)s+\nu(P_1)}-1)(q^{N(P_k)s+\nu(P_k)}-1)}+ \sum_{i=1}^{k-1}Z_i +\widetilde Z,$$ 
where, for $1\leq i\leq k$, 
$$Z_i=\frac{\mathcal P_i(q^s)}{\big(q^{N(P_i)s+\nu(P_i)}-1\big)\big(q^{N(P_{i+1})s+\nu(P_{i+1})}-1\big)},$$
with $\mathcal P$ and $\mathcal P_i$, $1\leq i< k$, being univariate $\mathbb Q$-polynomials, and  
\begin{gather*}
\widetilde Z=\frac{q-1}{q^2}\sum_{i=1}^k\left(\frac{q-r_i-1}{q^{N(P_i)s+\nu(P_i)}-1}+\frac{(q-1)r_iq^{N(P_i)s+\nu(P_i)}}{\big(q^{N(P_i)s+\nu(P_i)}-1\big)\big(q^{s+1}-1\big)}\right). 
\end{gather*}
In other words, there is a univariate $\mathbb Q$-polynomials $\mathcal Q$ such that
$$Z(f;s)=\frac{\mathcal Q(q^s)}{(q^{s+1}-1)(q^{\alpha s+1}-1)(q^{\beta s+1}-1)\prod_{1\leq i\leq k}(q^{N(P_i)s+\nu(P_i)}-1)}.$$
\end{proposition}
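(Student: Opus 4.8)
The goal is to upgrade the formula of the preceding proposition (the ``raw'' expression for $Z(f;s)$ in terms of $\#V(\widetilde f)$, the Cardinalities $\#\widetilde E(T_j)^\circ$, and the numerical data $N_j,\nu_j$) into the clean rational form with the claimed denominator, exactly parallel to how Proposition \ref{zeta-nondegenerate} was deduced from Theorem \ref{main-formula-nondegenerate}. First I would substitute the table of Cardinalities of $\widetilde E(T_j)^\circ$ from the proof of Proposition \ref{zeta-nondegenerate} (it is unchanged here, since it depends only on the combinatorics of $\Sigma$ and the strict transform, not on whether we integrate over $\mathcal O_K^2$ or $\mathcal M_K^{\oplus 2}$) into the formula of the previous proposition. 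This splits $Z(f;s)$ into the constant term $\frac{q^2-\#V(\widetilde f)}{q^2}$, the $\widetilde Z$ piece collecting all contributions attached to the $k$ vertices $P_i$ (here with the extra factor $q^{N(P_i)s+\nu(P_i)}$ in the numerator coming from the polynomial-case count of the strict transform, cf. the proof of Proposition \ref{formula-nondegenerate-polynomial}), and the remaining sums $S_i$, $\widetilde S_0$, $\widetilde S_k$ indexed by the ``free'' vertices $T_j$ strictly between consecutive $P_i$'s.

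Next I would invoke the combinatorial lemmas already in hand. For each $1\le i<k$, writing $S_i$ over the common denominator $\prod_{j_i\le t\le j_{i+1}}(q^{N_ts+\nu_t}-1)$, the identity $q^{(N_{j-1}+N_{j+1})s+(\nu_{j-1}+\nu_{j+1})}-1$ being divisible by $q^{N_js+\nu_j}-1$ (Lemma \ref{lem37}) kills every intermediate factor from the numerator; pairwise coprimality of the $q^{N_js+\nu_j}-1$ for $j_i<j<j_{i+1}$ (Lemma \ref{lem37} when $\sum_{t\le i}r_tb_t\ne\sum_{t>i}r_ta_t$, and Lemma \ref{lem2.4} via the cyclotomic-polynomial argument in the borderline case) then leaves only the boundary factors, so $S_i$ collapses to $Z_i=\mathcal P_i(q^s)/\big((q^{N(P_i)s+\nu(P_i)}-1)(q^{N(P_{i+1})s+\nu(P_{i+1})}-1)\big)$. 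The same reduction, which is in fact easier since the borderline situation of Lemma \ref{lem2.4} cannot occur at the ends, handles $\widetilde S_0$ and $\widetilde S_k$, producing the factors $(q^{\alpha s+1}-1)$ and $(q^{\beta s+1}-1)$ in the denominator (reducing to $q-1$ when $\alpha$ or $\beta$ is $0$). Finally, putting the constant, the $\widetilde Z$ terms (each already a fraction with denominator a divisor of $(q^{N(P_i)s+\nu(P_i)}-1)(q^{s+1}-1)$), and the $Z_i$, $\widetilde S_0$, $\widetilde S_k$ over the single common denominator $(q^{s+1}-1)(q^{\alpha s+1}-1)(q^{\beta s+1}-1)\prod_{i}(q^{N(P_i)s+\nu(P_i)}-1)$ and clearing denominators yields the numerator polynomial $\mathcal Q$, which lies in $\mathbb Q[q^s]$ because all the pieces do; $\mathbb Q$-coefficients (not $\mathbb Z$) are forced by the rational fractions $(q-1)/q^2$ etc.

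The step I expect to be the main obstacle is the same one flagged in the proof of Proposition \ref{zeta-nondegenerate}: the borderline case $\sum_{t=1}^i r_tb_t=\sum_{t=i+1}^k r_ta_t$, where the polynomials $q^{N_js+\nu_j}-1$ for $j_i<j<j_{i+1}$ are \emph{not} pairwise coprime (they are all powers of a common substitution $T=q^{\delta s+1}$ up to the exponents $\nu_j$), so the naive ``sum of terms not divisible by the intermediate factor'' bookkeeping no longer suffices and one must instead run the cyclotomic-divisibility argument of Lemma \ref{lem2.4} exponent by exponent. Since that lemma is already proved and applies verbatim (the strict-transform count only changes the numerators, never the denominators $S_i$), the remaining work is purely the routine but slightly tedious bookkeeping of collecting the constant term and the $\widetilde Z$ correction — with its extra $q^{N(P_i)s+\nu(P_i)}$ numerator factor, which harmlessly cancels one factor of the denominator — over the common denominator. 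Thus I would state that the proof follows the lines of Proposition \ref{zeta-nondegenerate}, indicate the one modification in the $P_i$-contributions due to accounting for the full strict transform of $\widetilde\Phi$ and the constant term $\frac{q^2-\#V(\widetilde f)}{q^2}$, and leave the combinatorial collapses to the cited lemmas.
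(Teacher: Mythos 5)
Your proposal is correct and follows exactly the route the paper takes: it deduces the statement by combining the raw formula of Proposition \ref{formula-nondegenerate-polynomial} with the fake-pole elimination method of Proposition \ref{zeta-nondegenerate} (via Lemmas \ref{lem37} and \ref{lem2.4}), which is precisely the paper's one-line proof spelled out. The only point worth double-checking in a written version is the bookkeeping that produces the extra numerator factor $q^{N(P_i)s+\nu(P_i)}$ in $\widetilde Z$ (i.e., the contribution of the strict transform away from the exceptional fibre when integrating over $\mathcal O_K^2$ rather than $\mathcal M_K^{\oplus 2}$), which you correctly identify but only sketch.
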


\begin{proof}
This is a corollary of Propositions \ref{formula-nondegenerate-polynomial} and \ref{zeta-nondegenerate}, in which the latter contributes a proof method.	
\end{proof}

%************************************

\begin{example}[$A_{2n}$- and $D_{2n+3}$-singularities]
Let $f(x,y) = y^2 - x^{2n+1}$. We consider the toric modification $\Phi$ admissible for $f$ that corresponds to the regular simplicial cone subdivision $T_1 < \cdots < T_{n+2}$, with $T_j = (1,j)^t$ for $1 \le j \le n$, $T_{n+1} = P_1 = (2,2n+1)^t$, and $T_{n+2} = (1,n+1)^t$. In this case, $K$ can be any finite extension of $\mathbb Q_p$ provided $q>2$, $\Phi$ is a resolution of $(f,\mathbf 0)$, and $\widetilde{\Phi}$ is a resolution of $(\widetilde{f},\widetilde{\mathbf 0})$. We have $(N_j, \nu_j) = (2j,j+1)$ for $1 \le j \le n$, $(N_{n+1},\nu_{n+1}) = (4n+2,2n+3)$, and $(N_{n+2},\nu_{n+2}) = (2n+1,n+2)$. We also have $\#V(\widetilde{f})=q$, $\#\widetilde{E}(T_j)^{\circ}=q-1$ for $2 \le j \le n$, $\#\widetilde{E}(T_1)^{\circ}=\#\widetilde{E}(T_{n+2})^{\circ}=q$, and $\#\widetilde{E}(T_{n+1})^{\circ}=q-2$. Then, by a direct computation, after Proposition \ref{formula-nondegenerate-polynomial}, we get that $Z(y^2 - x^{2n+1};s)$ is $q-1$ times of
\begin{align*}
\frac{(q^{s+1}-1)(q^{2ns +n+1}+1)\sum_{j=1}^n q^{2js+j-1} + q^{(2n+1)s+n}(q^{(2n+1)s+n+1}(q^{s+2} - 1) + q^{s+1} - 1)}{(q^{(4n+2)s + 2n+3} - 1)(q^{s+1} - 1)}.
\end{align*}

   A similar consideration for the plane singularity of type $D_{2n+3}$, that is, $f(x,y) = xy^2 - x^{2n+2} = x(y^2 - x^{2n+1})$ (in this case, one has $\alpha = 1$ and $\# V(\widetilde{f}) = 2q-1$), yields
   \begin{align*}
       \widetilde{S}_0 & = \frac{(q-1)q^{3s+2}(q^{(4n+1)s+2n+1} + (q^{(2n+1)s+n+1} + 1) \sum_{j=0}^{n-1}q^{2js + j})}{(q^{s+1} - 1)(q^{(4n+4)s+2n+3}-1)}, \\
       \widetilde{S}_1 & = \frac{q^{(2n+2)s + n+2} + 1}{q^{(4n+4)s+2n+3}-1},
   \end{align*}
   and thus
   \begin{align*}
       Z(f;s) & = \frac{(q-1)^2}{q^2} + \frac{q-1}{q^2}(\widetilde{S}_0 + \widetilde{S}_1) + \frac{q-1}{q^2}\left(\tfrac{q-2}{q^{(4n+4)s+2n+3}-1} + \tfrac{(q-1)q^{(4n+4)s+2n+3}}{(q^{s+1} - 1)(q^{(4n+4)s+2n+3}-1)}\right) \\
       & = \tfrac{(q-1)(q^{(2n+2)s+n}(q^{s+1}-1) + (q-1)q^{(4n+4)s+2n+1}(q^{s+1}+1) + (q-1)q^{3s}(q^{(2n+1)s+n+1} + 1)\sum_{j=0}^{n-1} q^{2js+j})}{(q^{s+1} - 1)(q^{(4n+4)s+2n+3}-1)}.
   \end{align*}
\end{example}

\begin{example}[$A_{2n-1}$- and $D_{2n+2}$-singularities]
    In this example, we assume $p \neq 2$. Let us consider the plane singularity of type $A_{2n-1}$ (where $n \ge 2$), that is, $f(x,y) = y^2 - x^{2n} = (y-x^n)(y+x^n)$. Then, $\alpha = \beta = 0$, $(a,b) = (1,n)$, and $r = 2$. Although we do not have $a \ge 2$, we can still apply the deploy the method from the proof of Proposition \ref{zeta-nondegenerate}, provided that $P_1 < T_m$. For this reason, we consider the regular simplicial cone subdivision $T_1 < \cdots < T_{n+1}$, with $T_j = (1,j)^t$. We have $(N_j, \nu_j) = (2j,j+1)$ for $1 \le j \le n$, and $(N_{n+1},\nu_{n+1}) = (2n,n+2)$. It follows that
\begin{align*}
     \widetilde{S}_0 & = \sum_{j=1}^{n-1}\frac{q-1}{(q^{2js + j+1} - 1)(q^{(2j+2)s+j+2} - 1)} + \frac{q}{q^{2s+2} - 1} + \sum_{j=2}^{n-1} \frac{q-1}{q^{2js+j+1} - 1} = \frac{1 + \sum_{j=1}^{n-1}q^{2js+j+1}}{q^{2ns+n+1}-1}
\end{align*}
and
\begin{align*}
    \widetilde{S}_1 & = \frac{q-1}{(q^{2ns+n+1} - 1)(q^{2ns+n+2} - 1)} + \frac{q}{q^{2ns+n+2} - 1} = \frac{1}{q^{2ns+n+1} - 1}.
\end{align*}
Since $\# V(\tilde{f}) = 2q-1$, the zeta function of $f(x,y) = xy^2 - x^{2n+1}$ is
\begin{align*}
       Z(f;s) & = \frac{q-1}{q^2}\left(q-1 + \widetilde{S}_0 + \widetilde{S}_1 + \frac{q-3}{q^{2ns+n+1} - 1} + \frac{2(q-1)q^{2ns+n+1}}{(q^{2ns+n+1} - 1)(q^{s+1} - 1)}\right) \\
       & = \frac{(q-1)((q-1)q^{2ns+n-1}(q^{s+1}+1) + (q^{s+1}-1)\sum_{j=1}^{n-1}q^{2js+j-1})}{(q^{2ns+n+1} - 1)(q^{s+1} - 1)}.
   \end{align*}
As for the plane singularity of type $D_{2n+2}$, that is, $f(x,y) = xy^2 - x^{2n+1} = x(y-x^n)(y+x^n)$, one has $\alpha = 1$ and $\# V(\tilde{f}) = 3q-2$. Hence
\begin{align*}
    \widetilde{S}_0 & = (q-1)\sum_{j=0}^{n-1}\left(\frac{1}{(q^{(2j+1)s + j+1} - 1)(q^{(2j+3)s+j+2} - 1)} + \frac{1}{q^{(2j+1)s+j+1} - 1} \right) \\
    & = \frac{(q-1)\sum_{j=1}^nq^{(2j+1)s+j+1}}{(q^{(2n+1)s+n+1}-1)(q^{s+1}-1)},\\
    \widetilde{S}_1 & = \frac{q-1}{(q^{(2n+1)s+n+1} - 1)(q^{(2n+1)s+n+2} - 1)} + \frac{q}{q^{(2n+1)s+n+2} - 1} = \frac{1}{q^{(2n+1)s+n+1} - 1},
\end{align*}
and thus
\begin{align*}
       Z(f;s) & = \frac{q-1}{q^2}\left(q-2 + \widetilde{S}_0 + \widetilde{S}_1 + \frac{q-3}{q^{(2n+1)s+n+1} - 1} + \frac{2(q-1)q^{(2n+1)s+n+1}}{(q^{(2n+1)s+n+1} - 1)(q^{s+1} - 1)}\right) \\
       & = \frac{(q-1)(q^{(2n+1)s+n}(q^{s+1} - 2q^{s} + 1) + (q-1)\sum_{j=1}^n q^{(2j+1)s+j-1}}{(q^{(2n+1)s+n+1} - 1))(q^{s+1} - 1)}.
   \end{align*}
\end{example}

\begin{example}[$E$-singularity]
We consider the $E_6$-plane curve singularity $f(x,y)=y^3-x^4$. We have $\alpha=\beta=0$, $(a,b)=(3,4)$, and $r=1$. Take the regular simplicial subdivision $(1,1)^t< (3,4)^t <(2,3)^t <(1,2)^t$. One has $(N_1, \nu_1)=(3,2)$, $(N_2,\nu_2)=(12,7)$, $(N_3,\nu_3)=(8,5)$, and $(N_4, \nu_4)=(4,3)$. Thus, we have 
\begin{align*}
    \widetilde{S}_0 & = \frac{q-1}{(q^{3s+2}-1)(q^{12s+7}-1)} + \frac{q}{q^{3s+2}-1}= \frac{q^{9s+6}+q^{6s+4}+q^{3s+2}+1}{q^{12s+7}-1},\\
    \widetilde{S}_1 &= \frac{q-1}{(q^{12s+7}-1)(q^{8s+5}-1)} + \frac{q-1}{(q^{8s+5}-1)(q^{4s+3}-1)} + \frac{q-1}{q^{8s+5}-1} + \frac{q}{q^{4s+3}-1}\\ &= \frac{q^{8s+5}+q^{4s+3}+1}{q^{12s+7}-1},
\end{align*}
and hence (note that $\#V(\widetilde{f)}= q$)
\begin{align*}
    Z(f;s)&= \frac{q-1}{q^2}\left(q+ \widetilde{S}_0 + \widetilde{S}_1 +\frac{q-2}{q^{12s+7}-1} +\frac{(q-1)q^{12s+7}}{(q^{12s+7}-1)(q^{s+1}-1)} \right)\\ 
    &= \frac{(q-1)q^{3s}(q^{10s+7}-q^{9s+5}+q^{7s+5}-q^{5s+3}+q^{4s+3}-q^{3s+2}+q^{2s+2}-1)}{(q^{12s+7}-1)(q^{s+1}-1)}.
\end{align*}
Next, consider the $E_7$-plane curve singularity $f(x,y)=y^3 - x^3y =y(y^2-x^3)$. In this case, $\alpha=0$, $\beta=1$, $(a,b)=(2,3)$, and $r=1$. Now, let us take the regular simplicial subdivision $(1,1)^t < (2,3)^t <(1,2)^t$. One has $(N_1, \nu_1)=(3,2)$, $(N_2,\nu_2)=(9,5)$, and $(N_3,\nu_3)=(5,3)$. It follows that 
\begin{align*}
    \widetilde{S}_0 & = \frac{q-1}{(q^{3s+2}-1)(q^{9s+5}-1)} + \frac{q}{q^{3s+2}-1}= \frac{q^{6s+4}+q^{3s+2}+1}{q^{9s+5}-1},\\
    \widetilde{S}_1 &= (q-1)\left(\frac{1}{(q^{9s+5}-1)(q^{5s+3}-1)} + \frac{1}{(q^{5s+3}-1)(q^{s+1}-1)} + \frac{1}{q^{5s+3}-1} + \frac{1}{q^{s+1}-1}\right)\\ &= \frac{(q-1)q^{3s+5}(q^{4s+2}+1)}{(q^{9s+5}-1)(q^{s+1}-1)},
\end{align*}
and, since $\#V(\widetilde{f)}= 2q-1$, the zeta function of $f(x,y) = y^3 - x^3y$ is
\begin{align*}
    Z(f;s) & = \frac{q-1}{q^2}\left(q-1 + \widetilde{S}_0 + \widetilde{S}_1 + \frac{q-2}{q^{9s+5}} + \frac{(q-1)q^{9s+5}}{(q^{9s+5}-1)(q^{s+1}-1)}\right)\\
    &= \frac{{q}^{3s}(q-1)( {q}^{7s+5}-{q}^{7s+4}+{q}^{6s+4}-{q}^{6s+3}+{q}^{4s+3}-{q}^{3s+2}+{q}^{2s+2} - {q}^{2s+1}+q^{s+1}-1)}{({q}^{9s+5}-1)(q^{s+1}-1)}.
\end{align*}
Finally, we deal with the plane curve singularity of type $E_8$, that is, $f(x,y)=y^3-x^5$. Now, we have $\alpha=\beta=0$, $(a,b)=(3,5)$, and $r=1$. Take the regular simplicial subdivision $(1,1)^t < (2,3)^t <(3,5)^t <(1,2)^t$. One has $(N_1, \nu_1)=(3,2)$, $(N_2,\nu_2)=(9,5)$, $(N_3,\nu_3)=(15,8)$, and $(N_4, \nu_4)=(5,3)$. This gives 
\begin{align*}
    \widetilde{S}_0 & = \frac{q-1}{(q^{3s+2}-1)(q^{9s+5}-1)} + \frac{q-1}{(q^{9s+5}-1)(q^{15s+8}-1)} + \frac{q}{q^{3s+2}-1} + \frac{q-1}{q^{9s+5}-1}\\ 
    &= \frac{q^{10s+6}+q^{6s+4}+q^{5s+3}+1}{q^{15s+8}-1},\\
    \widetilde{S}_1 &= \frac{q-1}{(q^{15s+8}-1)(q^{5s+3}-1)} + \frac{q}{(q^{5s+3}-1)}= \frac{q(q^{10s+5}+q^{5s+2}+1)}{q^{15s+8}-1},
\end{align*}
and hence (note that $\#V(\widetilde{f)}= q$) the zeta function of $f(x,y) = y^3 - x^5$ is given by
\begin{align*}
    Z(f;s)&= \frac{q-1}{q^2}\left(q+ \widetilde{S}_0 + \widetilde{S}_1 +\frac{q-2}{q^{15s+8}-1} +\frac{(q-1)q^{15s+8}}{(q^{15s+8}-1)(q^{s+1}-1)} \right)\\ 
    &= \frac{(q-1) {q}^{3s}( {q}^{13s+8}-{q}^{12s+6}+{q}^{10s+6}-{q}^{9s+5}+{q}^{8s+5}-{T}^{6s+3}+{q}^{4s+3}-{q}^{2s+1}+q^{s+1}-1)}{({q}^{15s+8}-1)(q^{s+1}-1)}.
\end{align*}
\end{example}

%***************************************

\section{General plane curve singularities}\label{resolution-tree}

\subsection{Resolution by toric modifications}\label{resolution-graph}
The content of this section is a $p$-adic version of Subsections 3.1 and 3.2 in \cite{Thuong-Hung}. Let $f(x,y)$ be a polynomial in $\mathcal O_K[[x]][y]$ that defines a plane curve $C\subseteq K^2$ near $\mathbf 0$. Let us write   
\begin{equation}\label{f-initial}
f(x,y)=cx^{\alpha}y^{\beta}\prod_{i=1}^k\prod_{\ell=1}^{r_i}\prod_{\tau=1}^{s_{i\ell}}f_{i\ell \tau}(x,y),
\end{equation}
where $c\in \mathcal O_K\setminus \mathcal M_K$, $\alpha, \beta\in\mathbb N$, and all $f_{i\ell\tau}$ are monic, non-smooth and irreducible in $\overline{K}[[x]][y]$. For convenience, we put 
$$f_i=\prod_{\ell=1}^{r_i}f_{\i\ell}\quad\text{and}\quad f_{\i\ell}=\prod_{\tau=1}^{s_{\i\ell}}f_{i\ell\tau},$$ 
and put 
$$A_i=\sum_{\ell=1}^{r_i}A_{i\ell}\quad\text{and}\quad  A_{i\ell}=\sum_{\tau=1}^{s_{i\ell}}A_{i\ell \tau}$$ 
for $1\leq i\leq k$ and $1\leq \ell\leq r_i$. Similarly as in \cite{AO}, each $f_{i\ell\tau}$ admits an initial expansion in the following form
\begin{equation}\label{ff-initial}
f_{i\ell\tau}(x,y)=(y^{a_i}+\xi_{i\ell}x^{b_i})^{A_{i\ell\tau}}+\text{(higher terms)},
\end{equation}
where $\xi_{i\ell}\in \overline{K}$, $\xi_{i\ell}\not=\xi_{i\ell'}$ if $\ell\not=\ell'$, $a_i$, $b_i$, and $A_{i\ell\tau}$ are in $\mathbb N^*$ with $(a_i,b_i)$ coprime. Since all the components $f_{i\ell\tau}$ are non-smooth, we have $a_i\geq 2$ and $b_i\geq 2$ for $1\leq i\leq k$. 

As in the previous section, we pose  
\begin{condition-0}
For $1\leq i\leq k$ and $1\leq \ell\leq r_i$, $\xi_{i\ell}\in \mathcal O_K\setminus \mathcal M_K$ and $f_{i\ell}$ are monic in $K[[x]][y]$.
\end{condition-0}

We are going to recall briefly the inductive construction of a resolution graph $\mathbf G=\mathbf G_f$ of $f$ by toric modifications, following \cite{Thuong-Hung} (see also \cite{Thuong1, Thuong2}). As explained in \cite{Thuong-Hung}, toric modifications making an embedded resolution of the singularity $(f,\mathbf 0)$ give {\it bamboos} in $\mathbf G$, the connection of consecutive bamboos is encoded by the canonical choice of Tschirnhausen coordinates. 

The vectors $P_i=(a_i,b_i)^t$, $1\leq i\leq k$, with $P_1<\cdots < P_k$, correspond to the compact facets of the Newton polyhedron $\Gamma_f$ of $f$. Let $\Phi_0$ be a toric modification admissible for $f$ that corresponds to a regular simplicial cone subdivision $T_1<\cdots<T_m$. Let $T_0=(1,0)^t$, $T_{m+1}=(0,1)^t$. The first bamboo $\mathscr B_0$ of $\mathbf G$ is the subgraph consisting of vertices $E(T_0)$ (for $\alpha>0$ only), $E(T_1),\dots, E(T_m)$, $E(T_{m+1})$ (for $\beta>0$ only), and of edges $[E(T_j),E(T_{j+1})]$, $0\leq j\leq m$ ($j=0$ for $\alpha>0$ only, $j=m$ for $\beta>0$ only). 

Assume by induction that $\mathscr B$ is a bamboo of $\mathbf G$ that has been already constructed, it corresponds to a toric modification $\Phi_{\mathscr B}$, and equivalently, to a regular simplicial cone subdivision $T^{\mathscr B}_1 <\cdots < T^{\mathscr B}_{m^{\mathscr B}}$. Let $f_{\mathscr B}$ be the pullback of $f$ under the composition of all toric modifications from the first one until the predecessor of $\Phi_{\mathscr B}$ for which $\Phi_{\mathscr B}$ is admissible. We assume that $f_{\mathscr B}$ is in $\mathcal O_K[[x_{\mathscr B}, y_{\mathscr B}]]$ and has the following initial expansion in the corresponding Tschirnhausen coordinates $(x_{\mathscr B},y_{\mathscr B})$:
\begin{align}\label{fB}
f_{\mathscr B}(x_{\mathscr B},y_{\mathscr B})=c_{\mathscr B}x_{\mathscr B}^{\alpha^{\mathscr B}}y_{\mathscr B}^{\beta^{\mathscr B}}\prod_{i=1}^{k^{\mathscr B}}\prod_{\ell=1}^{r^{\mathscr B}_i}\prod_{\tau=1}^{s^{\mathscr B}_{i\ell}}f^{\mathscr B}_{i\ell\tau}(x_{\mathscr B},y_{\mathscr B}),
\end{align}
where $c_{\mathscr B}\in \mathcal O_K\setminus \mathcal M_K$, $\alpha^{\mathscr B}, \beta^{\mathscr B}\in \mathbb N$, and
\begin{align}\label{fBir}
f^{\mathscr B}_{i\ell\tau}(x_{\mathscr B},y_{\mathscr B})=(y_{\mathscr B}^{a^{\mathscr B}_i}+\xi^{\mathscr B}_{i\ell}x_{\mathscr B}^{b^{\mathscr B}_i})^{A^{\mathscr B}_{i\ell\tau}}+\text{(higher terms)}
\end{align}
are non-smooth and irreducible in $\overline{K}[[x_{\mathscr B}]][y_{\mathscr B}]$, with $\xi^{\mathscr B}_{i\ell}\not=0$ distinct. For convenience, we put 
$$f_i^{\mathscr B}=\prod_{\ell=1}^{r_i^{\mathscr B}}f_{\i\ell}^{\mathscr B}\ \ \text{and}\ \ f_{\i\ell}^{\mathscr B}=\prod_{\tau=1}^{s_{\i\ell}}f_{i\ell\tau}^{\mathscr B},$$ 
and put 
$$A_i^{\mathscr B}=\sum_{\ell=1}^{r_i^{\mathscr B}}A_{i\ell}^{\mathscr B}\ \ \text{and}\ \ A_{i\ell}^{\mathscr B}=\sum_{\tau=1}^{s_{i\ell}^{\mathscr B}}A_{i\ell \tau}^{\mathscr B},$$ 
for $1\leq i\leq k^{\mathscr B}$ and $1\leq \ell\leq r_i^{\mathscr B}$. For $\mathscr B=\mathscr B_0$, it is possible that $\beta^{\mathscr B}\geq 0$, while for $\mathscr B\not=\mathscr B_0$, it is a fact that  $\beta^{\mathscr B}=0$. By \cite[Section 4.3]{AO}, $a_i^{\mathscr B}\geq 2$, $b_i^{\mathscr B}\geq 2$, $(a_i^{\mathscr B},b_i^{\mathscr B})=1$ for $1\leq i\leq k^{\mathscr B}$. Put $P^{\mathscr B}_i=(a^{\mathscr B}_i,b^{\mathscr B}_i)^t$ for $1\leq i\leq k^{\mathscr B}$, with $P^{\mathscr B}_1<\cdots <P^{\mathscr B}_{k^{\mathscr B}}$. 

\begin{condition-b}
For $1\leq i\leq k^{\mathscr B}$ and $1\leq \ell\leq r_i^{\mathscr B}$, $\xi_{i\ell}^{\mathscr B}\in \mathcal O_K\setminus \mathcal M_K$ and $f_{i\ell}^{\beta}$ are monic in $K[[x]][y]$.
\end{condition-b}

By \cite[Subsection 4.3]{AO}, the $A^{\mathscr B}_{i\ell}$-th Tschirnhausen approximate polynomial of $f^{\mathscr B}_{i\ell}$ has the form 
$$h^{\mathscr B}_{i\ell}(x_{\mathscr B},y_{\mathscr B})=y_{\mathscr B}^{a^{\mathscr B}_i}+\xi^{\mathscr B}_{i\ell}x_{\mathscr B}^{b^{\mathscr B}_i}+\text{(higher terms)}.$$ 
Let $(x_j,y_j)=(x_{\mathscr B,j},y_{\mathscr B,j})$ be the local coordinates of the toric chart corresponding to the matrix $(T_j^{\mathscr B},T_{j+1}^{\mathscr B})$. Consider $T^{\mathscr B}_j=P^{\mathscr B}_{i_0}$ for some $i_0\in\{1,\dots,k^{\mathscr B}\}$. Similarly as in \cite{AO}, we put
\begin{equation*}\label{T-change}
\begin{cases}
u=x_j\\
v=(\Phi_{\mathscr B}^*h^{\mathscr B}_{i_0\ell})/x_j^{a^{\mathscr B}_{i_0}b^{\mathscr B}_{i_0}}
\end{cases}
\end{equation*}
for some $R$ in $\overline{K}[[x_j,y_j]]$, from which (see \cite[Lemma 3.1]{Thuong-Hung})
\begin{equation*}
\begin{cases}
x_j=u\\
y_j=-\xi_{i_0\ell} + (-\xi_{i_0\ell})^{-c^{\mathscr B}_{j+1}b^{\mathscr B}_{i_0}}v+R'(u,v),
\end{cases}
\end{equation*}
for some $R'$ in  $\overline{K}[[u,v]]$. This system local coordinates $(u,v)$ is a system of Tschirnhausen coordinates, and $(\Phi_{\mathscr B}^*f_{\mathscr B})(u,v)$ admits a decomposition of the same type as (\ref{fB})-(\ref{fBir}) The Newton polyhedron of $\Phi_{\mathscr B}^*f_{\mathscr B}$ in the coordinates $(u,v)$ again gives rise to an admissible toric modification, which allows to construct a bamboo $\mathscr B'$, the successor of $\mathscr B$ in $\mathbf G$ at $E(P^{\mathscr B}_{i_0})$ associated to the branch $\ell$; hence we write $(x_{\mathscr B'}, y_{\mathscr B'})$ for $(u,v)$. The vertices of $\mathscr B'$ are $E(T^{\mathscr B'}_j)$, $1\leq j\leq m^{\mathscr B'}$, ordered as same as their sub-indices, the edges of $\mathscr B'$ are $[E(T^{\mathscr B'}_j),E(T^{\mathscr B'}_{j+1})]$, $0\leq j\leq m^{\mathscr B'}-1$, where $E(T^{\mathscr B'}_0)$ with $T^{\mathscr B'}_0=(1,0)^t$ is identified with $E(P^{\mathscr B}_{i_0})$. We usually denote $P^{\mathscr B'}_{\root}:=P^{\mathscr B}_{i_0}$. The graph $\mathbf G$ is called a {\it toric resolution tree} of $(f,\mathbf 0)$.  

We now write either $N_j^{\mathscr B}$ or $N(T_j^{\mathscr B})$ (resp. either $\nu_j^{\mathscr B}$ or $\nu(T_j^{\mathscr B})$) for the multiplicity of $\Phi_{\mathscr B}^*f_{\mathscr B}$ (resp. the pullback of $dx\wedge dy$ under the composition of all the toric modifications up to $\Phi_{\mathscr B}$) on $E(T_j^{\mathscr B})$. Then for $\mathscr B=\mathscr B_0$, $N_0=\alpha$ and $N_{m+1}=\beta$. For $\mathscr B\not=\mathscr B_0$, it together with (\ref{fB}) gives $N_0^{\mathscr B}=\alpha^{\mathscr B}=N(P_{\root}^{\mathscr B})$, and $\beta^{\mathscr B}=0$ because we use the Tschirnhausen coordinates. The following lemmas are similar as in \cite[Subsection 3.2]{Thuong-Hung}.

\begin{lemma}\label{lem41}
\begin{itemize}
\item[(i)] For $\mathscr B=\mathscr B_0$, and $1\leq j\leq m$ with $P_i\leq T_j \leq P_{i+1}$, $0\leq i\leq k$, we have
\begin{align*}
N_j=c_j\alpha+c_j\sum_{t=1}^ib_tA_t+d_j\sum_{t=i+1}^ka_tA_t+d_j\beta, \quad A_t=\sum_{\ell=1}^{r_t}\sum_{\tau=1}^{r_{t\ell}}A_{t\ell\tau}.
\end{align*}

\item[(ii)] For $\mathscr B\not=\mathscr B_0$ and $1\leq j\leq m^{\mathscr B}$ with $P^{\mathscr B}_i\leq T^{\mathscr B}_j \leq P^{\mathscr B}_{i+1}$, $0\leq i\leq k^{\mathscr B}$, we have
\begin{align*}
N^{\mathscr B}_j=c^{\mathscr B}_jN(P^{\mathscr B}_{\root})+c^{\mathscr B}_j\sum_{t=1}^ib^{\mathscr B}_tA^{\mathscr B}_t+d^{\mathscr B}_j\sum_{t=i+1}^{k^{\mathscr B}}a^{\mathscr B}_tA^{\mathscr B}_t,\quad A^{\mathscr B}_t=\sum_{\ell=1}^{r^{\mathscr B}_t}\sum_{\tau=1}^{r^{\mathscr B}_{t\ell}}A^{\mathscr B}_{t\ell\tau}.
\end{align*}
\end{itemize}
\end{lemma}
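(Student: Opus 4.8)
The plan is to prove both identities by the explicit toric-chart computation of the pullback, exactly as in the proof of Lemma~\ref{local-form}, keeping track only of the order of vanishing along the local equation of the exceptional divisor in question.

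First, for (i), I would fix $j$ with $P_i\le T_j\le P_{i+1}$ and work in the chart $(K^2_{\sigma_j};x_j,y_j)$ with $\sigma_j=(T_j,T_{j+1})$, so that $\Phi_0$ is given by $x=x_j^{c_j}y_j^{c_{j+1}}$, $y=x_j^{d_j}y_j^{d_{j+1}}$ and $E(T_j)=\{x_j=0\}$ on that chart; then $N_j$ is the $x_j$-order of $\Phi_0^{*}f$. The monomial factor $cx^{\alpha}y^{\beta}$ contributes $c_j\alpha+d_j\beta$ to this order. For each group of components $f_t$, I would use the initial expansion (\ref{ff-initial}) together with the ordering $P<Q\Leftrightarrow\det(P,Q)>0$: if $t\le i$, so $\det(P_t,T_j)\ge0$, then the $x_j$-exponent $b_tc_j$ of $\Phi_0^{*}x^{b_t}$ is no larger than the exponent $a_td_j$ of $\Phi_0^{*}y^{a_t}$, hence the $x_j$-order of $\Phi_0^{*}f_t$ is $b_tc_jA_t$, where $A_t=\sum_{\ell}\sum_{\tau}A_{t\ell\tau}$; if $t\ge i+1$, then symmetrically the $y^{a_t}$-term dominates and the $x_j$-order is $a_td_jA_t$ (the boundary case $T_j=P_i$ falls under the first clause, since then $c_j=a_i$ and $b_ic_jA_i=a_ib_iA_i$). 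Summing all these contributions gives the asserted formula for $N_j$; this is word for word the argument already recorded in the proof of Lemma~\ref{local-form}, only carried out with the correct total multiplicities $A_t$ rather than the numbers of branches.

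Next, for (ii), I would invoke the inductive construction of the bamboo $\mathscr B$: the series $f_{\mathscr B}$ is the pullback of $f$ under the composition of all toric modifications preceding $\Phi_{\mathscr B}$, written in the Tschirnhausen coordinates $(x_{\mathscr B},y_{\mathscr B})$, with the initial expansion (\ref{fB})--(\ref{fBir}); moreover $\alpha^{\mathscr B}=N(P_{\root}^{\mathscr B})$, this being the multiplicity already accumulated on $E(P_{\root}^{\mathscr B})=E(T_0^{\mathscr B})$ by the earlier modifications, and $\beta^{\mathscr B}=0$ because Tschirnhausen coordinates are used. For $1\le j\le m^{\mathscr B}$ the divisor $E(T_j^{\mathscr B})$ is created by $\Phi_{\mathscr B}$ itself and is $\{x_{\mathscr B,j}=0\}$ in the chart of $(T_j^{\mathscr B},T_{j+1}^{\mathscr B})$, so $N_j^{\mathscr B}$ is the $x_{\mathscr B,j}$-order of $\Phi_{\mathscr B}^{*}f_{\mathscr B}$. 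Running the computation of (i) with $(\alpha,\beta,a_t,b_t,A_t,c_j,d_j)$ replaced by $(N(P_{\root}^{\mathscr B}),0,a_t^{\mathscr B},b_t^{\mathscr B},A_t^{\mathscr B},c_j^{\mathscr B},d_j^{\mathscr B})$ then yields the claimed formula, with the would-be $d_j^{\mathscr B}\beta^{\mathscr B}$-term disappearing since $\beta^{\mathscr B}=0$.

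The computation is routine; the one point that needs care is the bookkeeping in (ii) --- one must be sure that $N_j^{\mathscr B}$ is measured against the pullback of the \emph{original} $f$ along the whole tower of toric modifications up to $\Phi_{\mathscr B}$, which is exactly what is guaranteed by the identification of $\Phi_{\mathscr B}^{*}f_{\mathscr B}$ with that pullback, together with the facts $\alpha^{\mathscr B}=N(P_{\root}^{\mathscr B})$ and $\beta^{\mathscr B}=0$. Everything else transports verbatim from the characteristic-zero treatment in \cite[Subsection~3.2]{Thuong-Hung}, since the order of vanishing of the pullback of a monomial under a toric map is independent of the ground field.
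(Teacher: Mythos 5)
Your argument is correct and is essentially the proof the paper intends: the paper states Lemma \ref{lem41} without a separate proof, deferring to the chart computation already carried out in the proof of Lemma \ref{local-form} and to the analogue in \cite[Subsection 3.2]{Thuong-Hung}, which is exactly what you reproduce (including the right bookkeeping with the total multiplicities $A_t$, where the displayed formula in Lemma \ref{local-form} writes $r_t$). Your treatment of (ii) via $\alpha^{\mathscr B}=N(P_{\root}^{\mathscr B})$ and $\beta^{\mathscr B}=0$ in the Tschirnhausen coordinates matches the paper's inductive setup verbatim.
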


\begin{lemma}\label{lem42}
\begin{itemize}
	\item[(i)] For $\mathscr{B}=\mathscr{B}_0$ and $1\leq j\leq m$, we have $\nu_j=c_j+d_j$. 
	
	\item[(ii)] For $\mathscr{B}\not=\mathscr{B}_0$ and $1\leq j\leq m^{\mathscr B}$, we have $\nu_j^\mathscr{B}=c_j^\mathscr{B}\nu(P^{\mathscr B}_{\root})+d_j^\mathscr{B}$. 
\end{itemize}
\end{lemma}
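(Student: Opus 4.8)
The plan is to prove (i) and (ii) simultaneously by induction on the depth of the bamboo $\mathscr B$ in the tree $\mathbf G$. The whole computation of $\nu_j^{\mathscr B}$ will reduce to two elementary inputs: the Jacobian of a regular toric chart, and the fact that the Tschirnhausen changes of variables preserve Haar measure and therefore have Jacobian of absolute value one.

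For the base case $\mathscr B=\mathscr B_0$, statement (i) is already contained in the ``Moreover'' part of Lemma \ref{local-form}, but I would recall the short reason: on the toric chart $(K^2_{\sigma_j};x_j,y_j)$ the map $\Phi_0$ is $\Phi_{\sigma_j}(x_j,y_j)=(x_j^{c_j}y_j^{c_{j+1}},x_j^{d_j}y_j^{d_{j+1}})$, and a direct determinant computation gives
\[
\Phi_0^*(dx\wedge dy)=\det(T_j,T_{j+1})\,x_j^{c_j+d_j-1}y_j^{c_{j+1}+d_{j+1}-1}\,dx_j\wedge dy_j.
\]
Since $\Sigma$ is regular, $\det(T_j,T_{j+1})=1$, and since $E(T_j)=\{x_j=0\}$ on this chart one reads off $\nu_j-1=c_j+d_j-1$, which is (i).

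For the inductive step, let $\mathscr B\neq\mathscr B_0$; in the notation of Subsection \ref{resolution-graph}, $\mathscr B$ is the successor of a bamboo $\mathscr C$ at a vertex $P^{\mathscr C}_{i_0}=P^{\mathscr B}_{\root}$, reached through Tschirnhausen coordinates $(x_{\mathscr B},y_{\mathscr B})$ which satisfy $x_{\mathscr B}=x_{\mathscr C,j_0}$, where $(x_{\mathscr C,j_0},y_{\mathscr C,j_0})$ is the chart of $\Phi_{\mathscr C}$ corresponding to $(T^{\mathscr C}_{j_0},T^{\mathscr C}_{j_0+1})$ with $T^{\mathscr C}_{j_0}=P^{\mathscr C}_{i_0}$. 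The two points I would use are that in these coordinates $E(P^{\mathscr B}_{\root})=\{x_{\mathscr B}=0\}$, and that near the relevant point $z=(0,-\xi^{\mathscr C}_{i_0\ell})\in E(P^{\mathscr B}_{\root})\cap E'$, with $\xi^{\mathscr C}_{i_0\ell}\neq 0$, the coordinate $y_{\mathscr C,j_0}$ is a unit. By the inductive hypothesis for $\mathscr C$ — which supplies both the local monomial-times-unit form of the pullback of $dx\wedge dy$ near $E(P^{\mathscr B}_{\root})$ (the deeper-bamboo analogue of Lemma \ref{local-form}) and the value $\nu(P^{\mathscr B}_{\root})-1$ of its $x_{\mathscr C,j_0}$-exponent there — and using that each Tschirnhausen substitution preserves measure and hence has unit Jacobian (cf.\ \cite[Proposition 7.4.1]{Ig}, as recorded before the statement), the pullback of $dx\wedge dy$ under all the modifications preceding $\Phi_{\mathscr B}$ is, in the coordinates $(x_{\mathscr B},y_{\mathscr B})$, of the form $x_{\mathscr B}^{\nu(P^{\mathscr B}_{\root})-1}\cdot U\cdot dx_{\mathscr B}\wedge dy_{\mathscr B}$ with $|U|=1$. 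Applying $\Phi_{\mathscr B}$ on the chart corresponding to $(T^{\mathscr B}_j,T^{\mathscr B}_{j+1})$, with local coordinates $(x_j,y_j)$ and $x_{\mathscr B}=x_j^{c^{\mathscr B}_j}y_j^{c^{\mathscr B}_{j+1}}$, and invoking once more the determinant identity together with the regularity $\det(T^{\mathscr B}_j,T^{\mathscr B}_{j+1})=1$, the exponent of $x_j$ in the total pullback of $dx\wedge dy$ becomes
\[
c^{\mathscr B}_j\big(\nu(P^{\mathscr B}_{\root})-1\big)+\big(c^{\mathscr B}_j+d^{\mathscr B}_j-1\big)=c^{\mathscr B}_j\,\nu(P^{\mathscr B}_{\root})+d^{\mathscr B}_j-1;
\]
since $E(T^{\mathscr B}_j)=\{x_j=0\}$ on this chart, this equals $\nu^{\mathscr B}_j-1$, which is (ii).

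The two determinant computations and the exponent bookkeeping are routine, and the argument is structurally the same as in \cite[Subsection 3.2]{Thuong-Hung}; these I would carry out without comment. The one place where genuine care is needed — and the only place where the $p$-adic setting differs from the characteristic-zero one — is the claim that a Tschirnhausen change of variables contributes a \emph{unit}, rather than a factor of positive $\mathcal M_K$-order, to the pullback of $dx\wedge dy$: one must verify that, viewed as an analytic isomorphism in a neighbourhood of $z$, its Jacobian is a unit. I expect this to be the main (and essentially the only) obstacle; it is settled by the analytic change of variables formula \cite[Proposition 7.4.1]{Ig}, which forces the Jacobian to have absolute value $1$ from the preservation of Haar measure.
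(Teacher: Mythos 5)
Your overall route is the same as the paper's, which for this lemma simply defers to the analogous computation in \cite[Subsection 3.2]{Thuong-Hung}: the base case is the chart computation already recorded in the proof of Lemma \ref{local-form} ($\Phi^*(dx\wedge dy)=x_j^{c_j+d_j-1}y_j^{c_{j+1}+d_{j+1}-1}dx_j\wedge dy_j$ by regularity of $\sigma_j$), and the inductive step is the exponent bookkeeping
$c^{\mathscr B}_j\big(\nu(P^{\mathscr B}_{\root})-1\big)+\big(c^{\mathscr B}_j+d^{\mathscr B}_j-1\big)=c^{\mathscr B}_j\,\nu(P^{\mathscr B}_{\root})+d^{\mathscr B}_j-1$,
which you carry out correctly.

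The one step you isolate as the crux --- that the Tschirnhausen substitution contributes a \emph{unit} factor to the pullback of $dx\wedge dy$ --- is indeed the crux, but your justification of it is circular. \cite[Proposition 7.4.1]{Ig} does not say that an analytic isomorphism preserves Haar measure; it says $\int_{\Psi(A)}|dx|=\int_A|\mathrm{Jac}\,\Psi|\,|dy|$, so ``$\Psi$ preserves Haar measure'' and ``$|\mathrm{Jac}\,\Psi|=1$'' are the \emph{same} assertion, and neither is automatic. What actually settles it is a direct computation: with $u=x_j$ and $v=y_j^{c_{j+1}b_{i_0}}\big(y_j+\xi_{i_0\ell_0}+x_jR'(x_j,y_j)\big)$ the Jacobian is $\partial v/\partial y_j$, and at the point $(x_j,y_j)=(0,-\xi_{i_0\ell_0})$ every term of $\partial v/\partial y_j$ except $y_j^{c_{j+1}b_{i_0}}$ carries a factor of $y_j+\xi_{i_0\ell_0}$ or of $x_j$ and hence vanishes, leaving $(-\xi_{i_0\ell_0})^{c_{j+1}b_{i_0}}$. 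This is a unit precisely because Condition-$\mathscr B$ places $\xi_{i_0\ell_0}$ in $\mathcal O_K\setminus\mathcal M_K$; if $\xi_{i_0\ell_0}$ were merely nonzero but of positive valuation, the Jacobian would not be a unit and the formula for $\nu_j^{\mathscr B}$ would fail. So the step is correct, but its proof is this computation together with Condition-$\mathscr B$, not an appeal to measure preservation. With that replacement your argument is complete.
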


%-------------------
\subsection{The $p$-adic zeta function of a plane curve singularity}
Let $f(x,y)\in \mathcal O_K[[x]][y]$ be a plane curve singularity at $\mathbf 0$. As explained in the introduction, we choose to compute $Z_0(f;s)$ instead of $Z(f;s)$ to avoid using the whole information about the strict transforms of an embedded resolution of singularities of $f$ (knowing the whole information is impossible, in general, because $K$ is not an algebraically closed field). Instead, the strict transforms only contribute to $Z_0(f;s)$ their intersection with the exceptional divisors. We need to extend $K$ to a reasonable size to guarantee our computation to run (see the hypotheses of the main theorems). In what follows, we are going to use the toric resolution tree $\mathbf G$ of $(f,\mathbf 0)$ constructed in Section \ref{resolution-graph}. We consider each bamboo $\mathscr B$ of $\mathbf G$ to be a subgraph of $\mathbf G$ with the single edge connecting $E(T_1^{\mathscr B})$ to $E(P_{\root}^{\mathscr B})$ included, we denote by $\mathbf B$ the set of bamboos of $\mathbf G$. 

\begin{theorem}\label{zeta-general}
Let $f(x,y)$ be a polynomial in $\mathcal O_K[[x]][y]$ that defines a plane curve $C\subseteq K^2$ at $\mathbf 0$. Assume that $f(x,y)$ admits the decomposition (\ref{f-initial}), the initial expansions (\ref{ff-initial}), and that Condition-$\mathscr B_0$ is satisfied. Assume in addition that an embedded resolution of $f(x,y)$ with the corresponding graph $\mathbf G$ can be constructed as in Section \ref{resolution-graph} by toric modifications $\Phi_{\mathscr B}$ thanks to the Tschirnhausen approximate polynomials, and that Condition-$\mathscr B$ are satisfied for all the non-top bamboos $\mathscr B$ of $\mathbf G$. For a top bamboo $\mathscr B$, put
$$Z_{\mathscr B}(s)=\frac{(q-1)^2/q^2}{(q^{N(P_{\root}^{\mathscr B})s+\nu(P_{\root}^{\mathscr B})}-1)(q^{s+1}-1)};$$
for a non-top bamboo $\mathscr B$, put 
\begin{gather*}
Z_{\mathscr B}(s)=\frac{q-1}{q^2}\sum_{j=1}^{m^{\mathscr B}}\frac{\#\widetilde{E}(T_j^{\mathscr B})^{\circ}}{q^{N_j^{\mathscr B}s+\nu_j^{\mathscr B}}-1}+\sum_{j=0}^{m^{\mathscr B}}\frac{(q-1)^2/q^2}{(q^{N_j^{\mathscr B}s+\nu_j^{\mathscr B}}-1)(q^{N_{j+1}^{\mathscr B}s+\nu_{j+1}^{\mathscr B}}-1)}\\
\qquad +\frac{(q-1)^2/q^2}{(q^{N(P_{\root}^{\mathscr B})s+\nu(P_{\root}^{\mathscr B})}-1)(q^{N_1^{\mathscr B}s+\nu_1^{\mathscr B}}-1)},
\end{gather*}
with convention for the second sum that	the term corresponding to $j=0$ (resp. $j=m^{\mathscr B}$) is $0$ if $\alpha^{\mathscr B}=0$ (resp. $\beta^{\mathscr B}=0$). Here $N_j^{\mathscr B}$, $\nu_j^{\mathscr B}$, $N(P_{\root}^{\mathscr B})$ and $\nu(P_{\root}^{\mathscr B})$ are given in Lemmas \ref{lem41} and \ref{lem42}. Then
$$Z_0(f;s)=\sum_{\mathscr B\in \mathbf B}Z_{\mathscr B}(s).$$
\end{theorem}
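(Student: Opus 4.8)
The plan is to proceed by induction on the number of bamboos of $\mathbf G$, using Proposition \ref{main-formula} as the engine at each step. For the base of the induction we take the innermost (top) bamboos; for a top bamboo $\mathscr B$, the associated toric modification $\Phi_{\mathscr B}$ already resolves the corresponding singularity and the strict transform meets each exceptional divisor $E(P_i^{\mathscr B})$ in $r_i^{\mathscr B}$ points, so in the Tschirnhausen coordinates the relevant local expansion is $c'u^{N(P_i^{\mathscr B})}vU(u,v)$ with $|U|=1$, exactly as in the proof of Theorem \ref{main-formula-nondegenerate}. Plugging this into Proposition \ref{main-formula} and observing that for a top bamboo there are no further singular points to resolve collapses the whole contribution into the single term $Z_{\mathscr B}(s)=\frac{(q-1)^2/q^2}{(q^{N(P_{\root}^{\mathscr B})s+\nu(P_{\root}^{\mathscr B})}-1)(q^{s+1}-1)}$; here one also uses that the root divisor $E(P_{\root}^{\mathscr B})$ of $\mathscr B$ is glued to $E(P_{i_0}^{\mathscr B'})$ of its predecessor, so its self-contribution is bookkept once.

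\textbf{Inductive step.} For a non-top bamboo $\mathscr B$, I would apply Proposition \ref{main-formula} to the pullback $f_{\mathscr B}$ in the Tschirnhausen coordinates $(x_{\mathscr B},y_{\mathscr B})$ and to the admissible toric modification $\Phi_{\mathscr B}$. This expresses $Z_0(f_{\mathscr B};s)$ (up to the factor $q^{\nu(P_{\root}^{\mathscr B})-1}$ coming from the accumulated Jacobian, which Lemma \ref{lem42} absorbs into the $\nu_j^{\mathscr B}$) as the sum of the ``exceptional'' part
$$\frac{q-1}{q^2}\left(\sum_{j=1}^{m^{\mathscr B}}\frac{\#\widetilde{E}(T_j^{\mathscr B})^{\circ}}{q^{N_j^{\mathscr B}s+\nu_j^{\mathscr B}}-1}+\sum_{j=0}^{m^{\mathscr B}}\frac{q-1}{(q^{N_j^{\mathscr B}s+\nu_j^{\mathscr B}}-1)(q^{N_{j+1}^{\mathscr B}s+\nu_{j+1}^{\mathscr B}}-1)}\right)$$
plus a sum over the intersection points of $E(P_i^{\mathscr B})$ with the strict transform of integrals $\int_{\mathcal M_K^{\oplus 2}}|f'[i,\ell]|^s|u^{\nu(P_i^{\mathscr B})-1}du\wedge dv|$. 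Each of those residual integrals is, by construction, precisely $Z_0(f_{\mathscr B'};s)$ for the successor bamboo $\mathscr B'$ at $E(P_i^{\mathscr B})$ along branch $\ell$ — the Tschirnhausen change of variables is exactly the coordinate change producing $\mathscr B'$, and Condition-$\mathscr B$ guarantees $\Phi_{\mathscr B'}$ and $\widetilde{\Phi}_{\mathscr B'}$ are compatible so that Proposition \ref{main-formula} applies again. One subtlety in matching the $j=0$ term of the second sum with $\frac{(q-1)^2/q^2}{(q^{N(P_{\root}^{\mathscr B})s+\nu(P_{\root}^{\mathscr B})}-1)(q^{N_1^{\mathscr B}s+\nu_1^{\mathscr B}}-1)}$: since $E(T_0^{\mathscr B})$ is identified with $E(P_{\root}^{\mathscr B})$, one has $N_0^{\mathscr B}=N(P_{\root}^{\mathscr B})$ and $\nu_0^{\mathscr B}=\nu(P_{\root}^{\mathscr B})$ (even when $\alpha^{\mathscr B}=\beta^{\mathscr B}=0$, in which case the $j=0$ term of the ``bare'' second sum would be $0$, but the edge $[E(T_1^{\mathscr B}),E(P_{\root}^{\mathscr B})]$ is included in $\mathscr B$ by the convention fixed before the theorem), so this term is present and equals the displayed one; the final $j=m^{\mathscr B}$ term with $\beta^{\mathscr B}=0$ is genuinely $0$ as in the convention.

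\textbf{Assembling the recursion.} Iterating, $Z_0(f;s)=Z_0(f_{\mathscr B_0};s)$ unfolds along the tree $\mathbf G$: each bamboo contributes its exceptional part exactly once, and the root-edge term of a non-top bamboo $\mathscr B'$ is produced as one of the residual integrals of its parent, so no term is double-counted and none is omitted. Collecting, $Z_0(f;s)=\sum_{\mathscr B\in\mathbf B}Z_{\mathscr B}(s)$ with $Z_{\mathscr B}(s)$ as defined. I expect the main obstacle to be the careful bookkeeping of the shared divisor between a bamboo and its predecessor — making sure the term $\frac{(q-1)^2/q^2}{(q^{N(P_{\root}^{\mathscr B})s+\nu(P_{\root}^{\mathscr B})}-1)(q^{N_1^{\mathscr B}s+\nu_1^{\mathscr B}}-1)}$ is attributed to $\mathscr B$ and not to its parent, that $E(P_{\root}^{\mathscr B})^{\circ}$ is not counted in both bamboos, and that the Jacobian factors $u^{\nu(P_i^{\mathscr B})-1}$ telescope correctly through the tower of Tschirnhausen coordinate changes (using $|du\wedge dv|=|dx_j\wedge dy_j|$ from \cite[Proposition 7.4.1]{Ig} and Lemma \ref{lem42}(ii)). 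The analytic and geometric inputs — compatibility of $\Phi_{\mathscr B}$ with $\widetilde{\Phi}_{\mathscr B}$, transversality of the strict transform, and the local forms of the pullbacks — are all already in place via Lemma \ref{local-form}, the Tschirnhausen expansion \eqref{Tschirnhausen}, and Proposition \ref{main-formula}.
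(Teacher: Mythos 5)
Your proposal is correct and follows essentially the same route as the paper: the paper's own proof is simply the remark that one applies Proposition \ref{main-formula} once per non-top bamboo, with the residual integrals at the points of $E(P_i^{\mathscr B})\cap E'$ unfolding into the contributions of the successor (and top) bamboos. Your write-up just makes explicit the bookkeeping the paper leaves implicit — the absorption of the weight $|u|^{\nu(P_{\root}^{\mathscr B})-1}$ into the $\nu_j^{\mathscr B}$ via Lemma \ref{lem42}(ii), the identification of the $j=0$ edge term with the root-edge term, and the non-double-counting of $E(P_{\root}^{\mathscr B})^{\circ}$.
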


\begin{proof}
This is a direct corollary of Proposition \ref{main-formula} and the construction of $\mathbf G$. Namely, we apply the proposition as many times as the number of non-top bamboos of $\mathbf G$. 
\end{proof}

\begin{theorem}\label{zeta-truepoles}
Assume that the hypotheses of Theorem \ref{zeta-general} are satisfied. Let $\mathbf B^{\mathrm{nt}}$ denote the set of all the non-top bamboos of $\mathbf G$. Then there exist a unitvariate $\mathbb Q$-polynomial $\mathcal Q$ such that
$$Z_0(f;s)=\frac{\mathcal Q(q^s)}{(q^{s+1}-1)(q^{\alpha s+1}-1)(q^{\beta s+1}-1)\prod_{\mathscr B\in \mathbf B^{\mathrm{nt}}}\prod_{1\leq i\leq k}(q^{N(P_i^{\mathscr B})s+\nu(P_i^{\mathscr B})}-1)}.$$

%
%\begin{align*}
%Z_{\mathscr B}(s)&=\frac{\mathcal P^{\mathscr B}(q^s)}{(q^{\alpha^{\mathscr B} s+1}-1)(q^{\beta^{\mathscr B} s+1}-1)(q^{N(P_1^{\mathscr B})s+\nu(P_1^{\mathscr B})}-1)(q^{N(P_k^{\mathscr B})s+\nu(P_k^{\mathscr B})}-1)}\\
%&\qquad\quad +\frac{q-1}{q^2}\sum_{i=1}^{k^{\mathscr B}}\left(\frac{q-r_i^{\mathscr B}-1}{q^{N(P_i^{\mathscr B})s+\nu(P_i^{\mathscr B})}-1}+\frac{r_i^{\mathscr B}(q-1)}{\big(q^{s+1}-1\big)\big(q^{N(P_i^{\mathscr B})s+\nu(P_i^{\mathscr B})}-1\big)}\right)\\
%&\qquad\qquad\qquad\qquad  +\sum_{i=1}^{k^{\mathscr B}-1}\frac{\mathcal P_i^{\mathscr B}(q^s)}{\big(q^{N(P_i^{\mathscr B})s+\nu(P_i^{\mathscr B})}-1\big)\big(q^{N(P_{i+1}^{\mathscr B})s+\nu(P_{i+1}^{\mathscr B})}-1\big)}.
%\end{align*} 
%Here, $N(P_i^{\mathscr B})$ and $\nu(P_i^{\mathscr B})$ are computed in Lemmas \ref{lem41} and \ref{lem42}.
\end{theorem}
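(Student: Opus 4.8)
The plan is to take the closed formula $Z_0(f;s)=\sum_{\mathscr B\in\mathbf B}Z_{\mathscr B}(s)$ supplied by Theorem \ref{zeta-general} and simplify it bamboo by bamboo, imitating the proof of Proposition \ref{zeta-nondegenerate}. A top bamboo $\mathscr B$ contributes $\frac{(q-1)^2/q^2}{(q^{N(P_{\root}^{\mathscr B})s+\nu(P_{\root}^{\mathscr B})}-1)(q^{s+1}-1)}$, already of the allowed shape: $q^{s+1}-1$ divides the global factor, while $P_{\root}^{\mathscr B}$ is one of the compact-facet normals $P_i^{\mathscr B'}$ of the (necessarily non-top) parent bamboo $\mathscr B'$, so $q^{N(P_{\root}^{\mathscr B})s+\nu(P_{\root}^{\mathscr B})}-1$ already occurs in $\prod_{\mathscr B'\in\mathbf B^{\mathrm{nt}}}\prod_i(q^{N(P_i^{\mathscr B'})s+\nu(P_i^{\mathscr B'})}-1)$.

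For a non-top bamboo $\mathscr B$ I would group the summands of $Z_{\mathscr B}(s)$ along the subdivision of $\{T_0^{\mathscr B},\dots,T_{m^{\mathscr B}+1}^{\mathscr B}\}$ by the compact-facet vertices $P_0^{\mathscr B}=T_0^{\mathscr B}$, $P_1^{\mathscr B},\dots,P_{k^{\mathscr B}}^{\mathscr B}$, $P_{k^{\mathscr B}+1}^{\mathscr B}=T_{m^{\mathscr B}+1}^{\mathscr B}$, obtaining inner blocks $S_i^{\mathscr B}$ over $j_i^{\mathscr B}\le j\le j_{i+1}^{\mathscr B}$ together with two outer blocks $\widetilde S_0^{\mathscr B}$ and $\widetilde S_{k^{\mathscr B}}^{\mathscr B}$, exactly as in (\ref{sum-Si}) and the ensuing displays in the proof of Proposition \ref{zeta-nondegenerate}, but with $N_j,\nu_j$ replaced by the values $N_j^{\mathscr B},\nu_j^{\mathscr B}$ from Lemmas \ref{lem41}, \ref{lem42}. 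Those lemmas show that on each block $[P_i^{\mathscr B},P_{i+1}^{\mathscr B}]$ both $N_j^{\mathscr B}$ and $\nu_j^{\mathscr B}$ are $\mathbb Z$-linear in $(c_j^{\mathscr B},d_j^{\mathscr B})$ and that $\Sigma^{\mathscr B}$ is regular, so the bamboo analogues of Lemmas \ref{lem37} and \ref{lem2.4} hold: within a block the polynomials $q^{N_j^{\mathscr B}s+\nu_j^{\mathscr B}}-1$ are pairwise coprime unless the block has constant slope $\nu_j^{\mathscr B}/N_j^{\mathscr B}$, in which case, after the substitution $T=q^{\delta^{\mathscr B}s+1}$, the cyclotomic-polynomial divisibility argument of Lemma \ref{lem2.4} applies. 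Consequently each block collapses to a rational function whose denominator involves only its two endpoint factors $q^{N(P_i^{\mathscr B})s+\nu(P_i^{\mathscr B})}-1$: every ``interior'' factor $q^{N_j^{\mathscr B}s+\nu_j^{\mathscr B}}-1$ with $T_j^{\mathscr B}\notin\{P_1^{\mathscr B},\dots,P_{k^{\mathscr B}}^{\mathscr B}\}$ is cancelled by the numerator. Summing the blocks, $Z_{\mathscr B}(s)$ becomes a rational function whose denominator divides $\prod_{0\le i\le k^{\mathscr B}+1}(q^{N(P_i^{\mathscr B})s+\nu(P_i^{\mathscr B})}-1)$; for $\mathscr B=\mathscr B_0$ the two extreme factors are $q^{\alpha s+1}-1$ and $q^{\beta s+1}-1$, while for $\mathscr B\neq\mathscr B_0$ the lower one is the parent factor $q^{N(P_{\root}^{\mathscr B})s+\nu(P_{\root}^{\mathscr B})}-1$ and the upper one is the constant $q-1$ (since $\beta^{\mathscr B}=0$, $\nu_{m^{\mathscr B}+1}^{\mathscr B}=1$). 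Putting all $Z_{\mathscr B}(s)$ over a common denominator yields $Z_0(f;s)$ in the asserted form with $\mathcal Q\in\mathbb Q[q^s]$.

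The main obstacle is, as in Proposition \ref{zeta-nondegenerate}, the constant-slope blocks, where pairwise coprimality of the $q^{N_j^{\mathscr B}s+\nu_j^{\mathscr B}}-1$ fails; one must first establish the bamboo version of Lemma \ref{lem37} — in particular $\gcd(\nu_j^{\mathscr B},\nu_{j+1}^{\mathscr B})=1$ and the identity $\frac{N_{j-1}^{\mathscr B}+N_{j+1}^{\mathscr B}}{N_j^{\mathscr B}}=\frac{\nu_{j-1}^{\mathscr B}+\nu_{j+1}^{\mathscr B}}{\nu_j^{\mathscr B}}=\det(T_{j-1}^{\mathscr B},T_{j+1}^{\mathscr B})\in\mathbb N$ inside a block (from Lemmas \ref{lem41}, \ref{lem42} and regularity of $\Sigma^{\mathscr B}$) — and then run an argument word for word parallel to Lemma \ref{lem2.4}. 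A second, purely bookkeeping, difficulty is the handling of the outer blocks when $\alpha^{\mathscr B}$ or $\beta^{\mathscr B}$ is zero and of the vertices where $\#\widetilde E(T_j^{\mathscr B})^{\circ}$ equals $q$ rather than $q-1$ (exactly as in the table in the proof of Proposition \ref{zeta-nondegenerate}), together with checking that for a child bamboo $\mathscr B$ the connecting term $\frac{(q-1)^2/q^2}{(q^{N(P_{\root}^{\mathscr B})s+\nu(P_{\root}^{\mathscr B})}-1)(q^{N_1^{\mathscr B}s+\nu_1^{\mathscr B}}-1)}$ merges into the block $\widetilde S_0^{\mathscr B}$, so that no factor outside $\{q^{s+1}-1,\,q^{\alpha s+1}-1,\,q^{\beta s+1}-1\}\cup\{q^{N(P_i^{\mathscr B})s+\nu(P_i^{\mathscr B})}-1:\mathscr B\in\mathbf B^{\mathrm{nt}},\,1\leq i\leq k^{\mathscr B}\}$ ever survives.
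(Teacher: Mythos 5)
Your proposal is correct and follows essentially the same route as the paper, whose proof simply invokes Theorem \ref{zeta-general} and the elimination method of Proposition \ref{zeta-nondegenerate}; you have merely spelled out the block decomposition, the bamboo analogues of Lemmas \ref{lem37} and \ref{lem2.4} (which do hold, since $\det(T_j^{\mathscr B},T_{j+1}^{\mathscr B})=1$ still gives $\gcd(\nu_j^{\mathscr B},\nu_{j+1}^{\mathscr B})=1$ from Lemma \ref{lem42}), and the endpoint bookkeeping that the paper leaves implicit.
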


\begin{proof}
We apply Theorem \ref{zeta-general} and use the same method as in the proof of Proposition \ref{zeta-nondegenerate}.
\end{proof}

\begin{remark}
It follows from Theorem \ref{zeta-truepoles}, any pole of the $p$-adic zeta function $Z_0(f;s)$ is either 
$$-1+\frac{2l\pi\sqrt{-1}}{\ln q}$$ 
or has the form 
$$-\frac{\nu(P_i^\mathscr{B})}{N(P_i^\mathscr{B})}+\frac{2l\pi\sqrt{-1}}{(\ln q) N(P_i^\mathscr{B})}$$ 
for some $\mathscr{B}\in\mathbf B$, $1\leq i\leq k^\mathscr{B}$, and for every $l\in \mathbb Z$. Thus, similarly as in \cite[Theorem 4.2]{Thuong-Hung}, Theorem \ref{zeta-general} and \cite[Theorem 3]{AC1} are the ingredients to give an elementary proof of the $p$-adic monodromy conjecture. We can also deduce from Theorem \ref{zeta-general} that the $p$-adic zeta function $Z_0(f;s)$ is a topological invariant (compare with \cite[Theorem 4.1]{Thuong-Hung}).
\end{remark}

%************************

\begin{ack}
The second author thanks the Vietnam Institute for Advanced Study in Mathematics (VIASM) for warm hospitality during his visit. He would also like to acknowledge support from the ICTP through the Associates Programme (2020-2025). 
\end{ack}

%\makeaddress
\end{document}